\setlist[enumerate]{label=(\alph*),font=\normalshape}
\setlist[itemize]{font=\normalshape}
\let\originalitem\item
\renewcommand{\item}[1][]{%
	\if\relax\detokenize{#1}\relax%
		\originalitem%
	\else%
		\originalitem[#1]%
		\phantomsection
		\def\@currentlabel{#1}
	\fi%
}
\newcommand{\refcheckize}[1]{%
  \expandafter\let\csname @@\string#1\endcsname#1%
  \expandafter\DeclareRobustCommand\csname relax\string#1\endcsname[1]{%
    \csname @@\string#1\endcsname{##1}\wrtusdrf{##1}}%
  \expandafter\let\expandafter#1\csname relax\string#1\endcsname
}
\let\originalleft\left
\let\originalright\right
\renewcommand{\left}{\mathopen{}\mathclose\bgroup\originalleft}
\renewcommand{\right}{\aftergroup\egroup\originalright}
\definecolor{kitgreen}{RGB}{0,150,130}
\definecolor{kitblue}{RGB}{70,100,170}
\definecolor{kitmaygreen}{RGB}{140,182,60}
\definecolor{kityellow}{RGB}{252,229,0}
\definecolor{kitorange}{RGB}{223,155,27}
\definecolor{kitbrown}{RGB}{167,130,46}
\definecolor{kitred}{RGB}{162,34,35}
\definecolor{kitpurple}{RGB}{163,16,124}
\definecolor{kitcyanblue}{RGB}{35,161,224}
\theoremstyle{plain}
\newtheorem{theorem}{Theorem}[section]
\newtheorem{definition}[theorem]{Definition}
\newtheorem{proposition}[theorem]{Proposition} 
\newtheorem{lemma}[theorem]{Lemma} 
\newtheorem{corollary}[theorem]{Corollary}
\theoremstyle{definition}
\newtheorem{remark}[theorem]{Remark}
\newcommand{\C}{\mathbb{C}} 
\newcommand{\R}{\mathbb{R}} 
\newcommand{\Z}{\mathbb{Z}} 
\newcommand{\Zeven}{\mathbb{Z}_\mathrm{even}}
\newcommand{\N}{\mathbb{N}} 
\newcommand{\Nodd}{\mathbb{N}_\mathrm{odd}}
\newcommand{\T}{\mathbb{T}} 
\newcommand{\F}{\mathcal{F}}
\newcommand{\der}[2][]{\@ifnextchar\der{\,#1\mathrm{d}#2\!}{\,#1\mathrm{d}#2}}
\newcommand{\Der}{\mathrm{d}}
\DeclareMathOperator*{\supp}{supp}
\DeclareMathOperator*{\esssup}{ess~sup}
\renewcommand{\Re}{\operatorname{Re}}
\newcommand{\calB}{\mathcal{B}}
\newcommand{\calE}{\mathcal{E}}
\newcommand{\calF}{\mathcal{F}}
\newcommand{\calG}{\mathcal{G}}
\newcommand{\calM}{\mathcal{M}}
\newcommand{\calN}{\mathcal{N}}
\newcommand{\bfD}{\mathbf{D}}
\newcommand{\bfE}{\mathbf{E}}
\newcommand{\bfB}{\mathbf{B}}
\newcommand{\bfH}{\mathbf{H}}
\newcommand{\bfx}{\mathbf{x}}
\newcommand{\bftx}{\tilde{\mathbf{x}}}
\newcommand{\bfP}{\mathbf{P}}
\newcommand{\bfu}{\mathbf{u}}
\newcommand{\bfv}{\mathbf{v}}
\newcommand{\bfw}{\mathbf{w}}
\newcommand{\SO}{\mathrm{SO}}
\newcommand{\landauO}{\mathcal{O}}
\newcommand{\landauo}{\hbox{o}}
\newcommand{\ee}{\mathrm{e}}
\newcommand{\ii}{\mathrm{i}}
\let\eps\varepsilon
\let\wto\rightharpoonup
\let\embeds\hookrightarrow
\newcommand{\set}[1]{\left\{ #1 \right\}}
\newcommand{\abs}[1]{\left\lvert #1 \right\rvert}
\newcommand{\norm}[1]{\left\lVert #1 \right\rVert}
\newcommand{\Norm}[1]{\bigl\lVert #1 \bigr\rVert}
\newcommand{\nnorm}[1]{\left\vert\kern-0.25ex\left\vert\kern-0.25ex\left\vert #1 \right\vert\kern-0.25ex\right\vert\kern-0.25ex\right\vert}
\newcommand{\ceil}[1]{\left\lceil #1 \right\rceil}
\newcommand{\impvar}{\,\cdot\,}
\newcommand{\ip}[2]{\left\langle #1 , #2 \right\rangle}
\newcommand{\Ip}[2]{\bigl\langle #1 , #2 \bigr\rangle}
\newcommand{\iip}[2]{\langle\!\langle #1 , #2 \rangle\!\rangle}
\newcommand{\pdv}[3][]{%
	\if\relax\detokenize{#1}\relax%
	\frac{\partial #2}{\partial #3}%
	\else%
	\frac{\partial^{#1} #2}{\partial {#3}^{#1}}%
	\fi%
}
\newcommand{\dv}[3][]{%
	\if\relax\detokenize{#1}\relax%
	\frac{\mathrm{d} #2}{\mathrm{d} #3}%
	\else%
	\frac{\mathrm{d}^{#1} #2}{\mathrm{d} {#3}^{#1}}%
	\fi%
}
\newlength{\negph@wd}
\newcommand{\negphantom}[1]{%
	\ifmmode
		\mathpalette\negph@math{#1}%
	\else
		\negph@do{#1}%
	\fi
}
\newcommand{\negph@math}[2]{\negph@do{$\m@th#1#2$}}
\newcommand{\negph@do}[1]{%
	\settowidth{\negph@wd}{#1}%
	\hspace*{-\negph@wd}%
}
\newcommand{\clonelabel}[2]{\@bsphack
	\expandafter\ifx\csname r@#2\endcsname\relax
	\else\protected@write\@auxout{}{\string\newlabel{#1}%
		{\csname r@#2\endcsname}}%
	\fi
	\expandafter\ifx\csname r@#2@cref\endcsname\relax
	\else\protected@write\@auxout{}{\string\newlabel{#1@cref}%
		{\csname r@#2@cref\endcsname}}%
	\fi
	\@esphack}
\DeclareMathAlphabet{\othermathbb}{U}{bbold}{m}{n}
\newcommand{\bbone}{\othermathbb{1}}
\newcommand{\begin@color}[1]{\begingroup\color{#1}} 
\newcommand{\beginold}{\begin@color{red}}
\newcommand{\beginnew}{\begin@color{blue}}
\let\end@color\endgroup
\let\endold\end@color
\let\endnew\end@color
\newcommand{\derf}[2][]{%
	\if\relax\expandafter\detokenize{#1}\relax%
		\frac{\mathrm{d} #2}{\sqrt{2 \pi}}%
	\else%
		\frac{\mathrm{d} #2}{(2 \pi)^{\nicefrac{#1}{2}}}%
	\fi%
}
\newcommand{\singularK}{\mathfrak{S}}
\newcommand{\regularK}{\mathfrak{R}}
\newcommand{\loc}{\mathrm{loc}}
\newcommand{\per}{\mathrm{per}}
\newcommand{\rad}{\mathrm{rad}}
\newcommand{\gs}{\mathrm{gs}}
\newcommand{\mopa}{\mathrm{mp}}
\newcommand{\fracDT}[1]{\abs{\partial_t}^{#1}}
\newcommand{\fc}{J}
\newcommand{\fcd}{H}
\newcommand{\fcr}{{\tilde J}}
\newcommand{\fcrd}{{\tilde H}}
\newcommand{\projT}{P_\T}
\newcommand{\projR}{P_\regularK}
\newcommand{\projS}{P_\singularK}
\newcommand{\islab}{.1}
\newcommand{\icyl}{.2}
\newcommand{\inoni}{.i}
\newcommand{\inonii}{.ii}
\begin{document}
	\title[Travelling breather solutions for cubic nonlinear Maxwell equations]{Travelling breather solutions in waveguides for cubic nonlinear Maxwell equations with retarded material laws}

	\author{Sebastian Ohrem}
	\address{Institute for Analysis, Karlsruhe Institute of Technology (KIT), D-76128 Karlsruhe, Germany}\email{sebastian.ohrem@kit.edu}
	\author{Wolfgang Reichel}
	\address{Institute for Analysis, Karlsruhe Institute of Technology (KIT), D-76128 Karlsruhe, Germany}\email{wolfgang.reichel@kit.edu}

	\date{\today}
	\subjclass[2000]{Primary: 35Q61, 49J10; Secondary: 35C07, 78A50}
	\keywords{Maxwell equations, waveguides, nonlinear material law, retardation, polychromatic breather solutions, variational method}


\begin{abstract}
 For Maxwell's equations with nonlinear polarization we prove the existence of time-periodic breather solutions travelling along slab or cylindrical waveguides. The solutions are TE-modes which are localized in space directions orthogonal to the direction of propagation. We assume a magnetically inactive and electrically nonlinear material law with a linear $\chi^{(1)}$- and a cubic $\chi^{(3)}$-contribution to the polarization. The $\chi^{(1)}$-contribution may be retarded in time or instantaneous whereas the $\chi^{(3)}$-contribution is always assumed to be retarded in time. We consider two different cubic nonlinearities which provide a variational structure under suitable assumptions on the retardation kernels. By choosing a sufficiently small propagation speed along the waveguide the second order formulation of the Maxwell system becomes essentially elliptic for the $\bfE$-field so that solutions can be constructed by the mountain pass theorem. The compactness issues arising in the variational method are overcome by either the cylindrical geometry itself or by extra assumptions on the linear and nonlinear parts of the polarization in case of the slab geometry. Our approach to breather solutions in the presence of time-retardation is systematic in the sense that we look for general conditions on the Fourier-coefficients in time of the retardation kernels. Our main existence result is complemented by concrete examples of coefficient functions and retardation kernels.    
\end{abstract}

	\maketitle


\section{Introduction}

We show existence and regularity of spatially localized, real-valued and time-periodic solutions (called breathers) to Maxwell's equations
\begin{align} \label{eq:maxwell}
    \begin{aligned}
	&\nabla \cdot \bfD = 0, 
	&&\nabla \times \bfE = - \bfB_t, 
	\\
	&\nabla \cdot \bfB = 0, 
	&&\nabla \times \bfH = \bfD_t,
	\end{aligned}
\end{align}
without charges and currents. \eqref{eq:maxwell} is posed on all of $\R^3$ with an underlying material that is either a slab waveguide or a cylindrically symmetric waveguide. We look for solutions that are travelling parallel to the direction of the waveguide, and which are transverse-electric, i.e. the electric field $\bfE$ is orthogonal to the direction of travel.
We assume that the material satisfies the constitutive relations
\begin{align}\label{eq:material}
	\bfB = \mu_0 \bfH, 
	\qquad
	\bf D = \epsilon_0 \bfE + \bfP(\bfE)
\end{align}
where $\mu_0, \eps_0 \in (0, \infty)$ are the vacuum permeability and permittivity, respectively. This means that the material is magnetically inactive. However, the displacement field $\bfD$ depends nonlinearly on the electric field $\bfE$ through the polarization field $\bfP(\bfE)$, which is modeled as a sum of a linear and a cubic function of $\bfE$. Both parts are local in space but nonlocal in time (cf. \cite{agrawal} for a physical motivation) and are given by
\begin{multline}\label{eq:polarization}
		\bfP(\bfE)(x,t) =\epsilon_0 \int_0^\infty \chi^{(1)}(\bfx, \tau)[\bfE(\bfx, t - \tau)] \der \tau
		\\ +\epsilon_0 \int_0^\infty \int_0^\infty \int_0^\infty \chi^{(3)}(\bfx, \tau_1, \tau_2, \tau_3)[\bfE(\bfx, t - \tau_1),\bfE(\bfx, t - \tau_2),\bfE(\bfx, t - \tau_3)] \der{\tau_1}\der{\tau_2}\der{\tau_3}.
\end{multline} 
Here $\bfx = (x,y,z)$ denotes the spatial variable, the susceptibility tensor $\chi^{(1)}(\bfx, \tau) \colon \R^3 \to \R^3$ is linear and $\chi^{(3)}(\bfx, \tau_1, \tau_2, \tau_3) \colon \R^3 \times \R^3 \times \R^3 \to \R^3$ is trilinear. 

By taking the curl of Faraday's law $\nabla \times \bfE = -\bfB_t$, we obtain from \eqref{eq:maxwell},\eqref{eq:material} the second order form of Maxwell's equations
\begin{align}\label{eq:2nd_order_maxwell}
	\nabla \times \nabla \times \bfE + \epsilon_0 \mu_0 \partial_t^2 \bfE + \mu_0 \partial_t^2 \bfP(\bfE) = 0.
\end{align}
While \eqref{eq:2nd_order_maxwell} is an equation only for $\bfE$, the other electromagnetic fields can be recovered if \eqref{eq:2nd_order_maxwell} holds: $\bfB$ is obtained from $\nabla \times \bfE = - \bfB_t$ by time-integration, and $\bfH, \bfD$ are then determined by the material laws \eqref{eq:material}. 
Next, $\bfB$ is divergence-free if it is divergence-free at time $0$ since $\partial_t \nabla \cdot \bfB = -\nabla \cdot(\nabla \times \bfE) = 0$. 
Lastly, $\bfD = \eps_0 \bfE + \bfP(\bfE)$ will be divergence-free because of the choices of $\bfE, \bfP$ made later.

We assume that the material is either a slab waveguide or a cylindrical waveguide. In the first case, the susceptibility tensors $\chi^{(j)}$ remain constant as $\bfx$ moves parallel to the slab. Assuming that the slab is given by $\set{x = 0}$, this means that
\begin{align}\label{eq:material:slab}\refstepcounter{equation}\tag{\theequation\islab}
	\chi^{(1)}(\bfx, \tau) = \chi^{(1)}(x, \tau), 
	\qquad
	\chi^{(3)}(\bfx, \tau_1, \tau_2, \tau_3) = \chi^{(3)}(x, \tau_1, \tau_2, \tau_3).
\end{align}
If instead the underlying material has a cylindrical waveguide geometry, we assume that the susceptibility tensors $\chi^{(j)}$ depend only on 
the distance from $\bfx$ to the cylinder core which we assume to be given by $\set{x = y = 0}$, so that
\begin{align}\label{eq:material:cyl}\tag{\theequation\icyl}
	\chi^{(1)}(\bfx, \tau) = \chi^{(1)}(r, \tau), 
	\qquad
	\chi^{(3)}(\bfx, \tau_1, \tau_2, \tau_3) = \chi^{(3)}(r, \tau_1, \tau_2, \tau_3).
\end{align}
where $r = \sqrt{x^2 + y^2}$.

With $I$ we denote the $3\times3$ identity matrix. We assume that the materials are isotropic, i.e.
\begin{align*}
	\chi^{(1)}(\bfx, \tau)[O \bfv] 
	= O\chi^{(1)}(\bfx, \tau)[\bfv], 
	\quad
	\chi^{(3)}(\bfx, \tau_1, \tau_2, \tau_3)[O \bfu, O \bfv, O \bfw]	
	= O \chi^{(3)}(\bfx, \tau_1, \tau_2, \tau_3)[\bfu, \bfv, \bfw]
\end{align*}
holds for $O \in \SO(3)$. This means that $\chi^{(1)}(\bfx, \tau) \in \R I$. For $\chi^{(3)}$ a variety of isotropic scenarios are possible, but in this paper we only consider two kinds of  nonlinear material responses: either
\begin{align}\label{eq:material:noni}\refstepcounter{equation}\tag{\theequation\inoni}
	&\chi^{(3)}(\bfx, \tau_1, \tau_2, \tau_3)[\bfu, \bfv, \bfw]
	= h(\bfx) \nu(\tau_1) \delta(\tau_2 - \tau_1) \delta(\tau_3 - \tau_1) \ip{\bfu}{\bfv} \bfw
\intertext{or}\label{eq:material:nonii}\tag{\theequation\inonii}
	&\chi^{(3)}(\bfx, \tau_1, \tau_2, \tau_3)[\bfu, \bfv, \bfw]
	= h(\bfx) \nu(\tau_1) \nu(\tau_2) \nu(\tau_3) \ip{\bfu}{\bfv} \bfw
\end{align}
where $\delta$ denotes the dirac measure at $0$, $\ip{\impvar}{\impvar}$ is the standard inner product on $\R^3$, and $h, \nu$ are given real-valued functions.

For these material laws, we will see that \eqref{eq:2nd_order_maxwell} can be viewed as a variational problem, and we will use a simple mountain-pass method in order to construct breather solutions to \eqref{eq:maxwell},\eqref{eq:material}. We deal with the kernel of the curl operator in \eqref{eq:2nd_order_maxwell} by looking for breather solutions in special divergence-free ansatz spaces that we discuss next.
For the slab geometry \eqref{eq:material:slab} we make the TE-polarized traveling wave ansatz
\begin{align}\label{eq:ansatz:slab}\refstepcounter{equation}\tag{\theequation\islab}
	\bfE(\bfx, t) = w(x, t - \tfrac{1}{c} z) \cdot \begin{pmatrix} 0 \\ 1 \\ 0 \end{pmatrix}
\end{align}
where $w \colon \R \times \R \to \R$ is periodic in the second variable, which we again denote by $t$. For the cylindrical geometry \eqref{eq:material:cyl} we instead consider the circular TE-polarized traveling wave ansatz
\begin{align}\label{eq:ansatz:cyl}\tag{\theequation\icyl}
	\bfE(\bfx, t) = w(r, t - \tfrac{1}{c} z) \cdot \begin{pmatrix} -\nicefrac yr \\ \nicefrac xr \\ 0 \end{pmatrix}
\end{align}
with $r = \sqrt{x^2 + y^2}$ and $w \colon (0, \infty) \times \R \to \R$ being periodic in $t$. Both ansatzes for the electric field are divergence-free, so that $\nabla\times\nabla\times \bfE = -\Delta \bfE$ holds, and are of a simple, essentially two-dimensional form, which greatly simplifies the discussion. More specifically, for the slab ansatz \eqref{eq:ansatz:slab} problem \eqref{eq:2nd_order_maxwell} reduces to
\begin{align}\refstepcounter{equation}\label{eq:profile_problem:slab}\tag{\theequation\islab}
	(- \partial_x^2 - \tfrac{1}{c^2} \partial_t^2) w + \epsilon_0 \mu_0 \partial_t^2 w + \mu_0 \partial_t^2 P(w)
	= 0
\end{align}
and for the cylindrical ansatz \eqref{eq:ansatz:cyl} to 
\begin{align}\label{eq:profile_problem:cyl}\tag{\theequation\icyl}
	(- \partial_r^2 - \tfrac{1}{r} \partial_r + \tfrac{1}{r^2}  - \tfrac{1}{c^2} \partial_t^2) w + \epsilon_0 \mu_0 \partial_t^2 w + \mu_0 \partial_t^2 P(w)
	= 0.
\end{align}
Here, depending on the choice of the nonlinear susceptibility tensor, the scalar polarization $P$ is given either by
\begin{align}\refstepcounter{equation}\label{eq:scalar_polarization:noni}\tag{\theequation\inoni}
	P(w)(\bfx, t) &= \epsilon_0\int_0^\infty \chi^{(1)}(\bfx, \tau) w(\bfx, t - \tau) \der \tau
	+ \epsilon_0 h(\bfx) \int_0^\infty w(\bfx, t - \tau)^3 \nu(\tau)\der{\tau}
\intertext{or by}\label{eq:scalar_polarization:nonii}\tag{\theequation\inonii}
	P(w)(\bfx, t) &= \epsilon_0\int_0^\infty \chi^{(1)}(\bfx, \tau) w(\bfx, t - \tau) \der \tau
	+ \epsilon_0 h(\bfx) \left(\int_0^\infty w(\bfx, t - \tau) \nu(\tau)\der{\tau}\right)^3
\end{align}
for susceptibilities \eqref{eq:material:noni} and \eqref{eq:material:nonii}, respectively. 
The simple form of the nonlinearity in $P(w)$, especially that the variables $\bfx$ and $\tau$ appear separated, are needed in order to obtain a variational problem. 
We denote by $\ast$ the convolution in time and normalize the speed of light to $c_0^2 = (\epsilon_0 \mu_0)^{-1} = 1$. Then problem \eqref{eq:profile_problem:slab} with polarization \eqref{eq:scalar_polarization:noni}, which we discuss as an example, becomes 
\begin{align*}
	\left(-\partial_x^2 + \left( 1 - \tfrac{1}{c^2} + \chi^{(1)} \ast \right) \partial_t^2\right) w + h(\bfx) (\nu \ast \partial_t^2) w^3 
	= 0.
\end{align*}
Inverting the convolution operator $\nu\ast\partial_t^2$ formally\footnote{rigorous considerations are given later}, we then obtain
\begin{align} \label{eq:example}
	\left(\nu\ast\partial_t^2 \right)^{-1}\left(-\partial_x^2 + \left( 1 - \tfrac{1}{c^2} + \chi^{(1)} \ast\right) \partial_t^2\right) w + h(\bfx) w^3 
	= 0.
\end{align}
Given our assumptions, we can ensure that the linear operator is symmetric when restricted to suitable spaces of time-periodic functions. Hence solutions formally are critical points of the functional
\begin{align} \label{eq:example_functional}
	\fc(w) = \int \left(\tfrac12 w \cdot \left(\nu\ast\partial_t^2\right)^{-1}\left(-\partial_x^2 + \left( 1 - \tfrac{1}{c^2} + \chi^{(1)} \ast \right) \partial_t^2\right) w + \tfrac1{4} h(\bfx) w^4\right) \der(\bfx, t).
\end{align}
Using the mountain-pass method, we will find critical points and then show that they are breather solutions to Maxwell's equations \eqref{eq:maxwell},\eqref{eq:material}.

In the literature, there are several papers treating existence of breather solutions of \eqref{eq:2nd_order_maxwell}. Many authors have considered monochromatic solutions, i.e. solutions of the form $\bfE(\bfx, t) = \calE(\bfx) \ee^{\ii \omega t} + c.c.$, where the complex conjugate is necessary in order to make the $\bfE$-field real valued. This is a viable approach if one ignores the higher-order harmonics $\ee^{\pm 3 \ii \omega t}$ coming from the nonlinear part of the polarization, or if one considers a nonlinear susceptibility tensor given by
\begin{align} \label{eq:special}
	\chi^{(3)}(\bfx, \tau_1, \tau_2, \tau_3)[\bfu, \bfv, \bfw] 
	= \tfrac{1}{2T} h(\bfx) \bbone_{[0, T]}(\tau_1) \delta(\tau_2 - \tau_1) \delta(\tau_3) \ip{\bfu}{\bfv} \bfw,
\end{align}
where $T = \frac{2 \pi}{\omega}$ is the period of the breather $\bfE$. Both approaches lead to the nonlinear curl-curl problem
\begin{align}\label{eq:semilinear_curl_curl}
	\nabla \times \nabla \times \calE 
	- \omega^2 g(\bfx) \calE - \omega^2 h(\bfx) \abs{\calE}^2 \calE
	= 0
\end{align}
which is variational provided $g(\bfx) = \int_0^\infty \chi^{(1)}(\bfx, \tau) \ee^{\ii \omega \tau} \der \tau$ is real valued. Instead of the cubic nonlinearity $h(\bfx) \abs{\calE}^2 \calE$, saturated nonlinearities $h(\bfx, \abs{\calE}^2) \calE$, which grow linearly as $\abs{\calE} \to \infty$, are also of interest and were first investigated by Stuart et al. \cite{McLeod92,Stuart04,Stuart91,StuartZhou03,StuartZhou96,StuartZhou10,StuartZhou05,StuartZhou01}. In these papers divergence-free, traveling, TE- or TM-polarized ansatz functions similar to \eqref{eq:ansatz:cyl} were used to reduce the Maxwell problem to an elliptic one-dimensional problem and to solve it via variational methods. 
An extension of Stuart's approach to more general wave-guide profiles was given in \cite{Mederski_Reichel}. Standing monochromatic breathers composed of axisymmetric divergence free ansatz functions were considered in \cite{azzolini_et_al, bartsch_et_al, benci_fortunato}. 
The next step forward to overcome special divergence free ansatz functions was accomplished by Mederski et al. \cite{MederskiENZ,MederskiSchino22,MederskiSchinoSzulkin20, BartschMederskiSurvey} who considered the full curl-curl problem \eqref{eq:semilinear_curl_curl}, also for more general nonlinearities $\partial_\calE h(\bfx, \calE)$. The difficulties arising from the infinite-dimensional kernel of $\nabla \times$ where overcome by a Helmholtz decomposition and suitable profile decompositions for Palais-Smale sequences. 
Alternative approaches used limiting absorption principles \cite{mandel_lap}, dual variational approaches \cite{mandel_dual, mandel_dual_nonlocal}, approximations near gap edges of photonic crystals \cite{dohnal1}, and monochromatic time-decaying solutions at interfaces of metals and dielectrics \cite{dohnal2, dohnal2_corr, Dohnal_He}. In the latter series of papers, also time-periodic solutions can be found if one additionally assumes $\mathcal{PT}$-symmetry of the materials. 

If one does not want to rely on very specific retardation kernels as in \eqref{eq:special} or if one wants to take higher harmonics into account then one is naturally led to polychromatic breather solutions. In the context of instantaneous material laws they have recently received increasing attention. As a model problem consider
\begin{align*}
	\nabla \times \nabla \times \calE 
	+ g(\bfx) \partial_t^2 \calE 
	+ h(\bfx) \partial_t^2 (\abs{\calE}^2 \calE)
	= 0
\end{align*}
For this problem, rigorous existence result for travelling breathers in the slab geometry \eqref{eq:material:slab} where either $g$ or $h$ contains delta distributions are given in \cite{kohler_reichel} by variational methods and in \cite{bruell_idzik_reichel} via bifurcation theory. 
Even earlier in \cite{PelSimWein} the authors used a combination of local bifurcation theory and continuation methods in a partly analytical and partly numerical study on traveling wave solutions where the linear coefficient $g$ is a periodic arrangement of delta potentials. Another rigorous existence results for breathers on finite but large time scales can be found in \cite{DohnalSchnaubeltTietz} for a set-up of Kerr-nonlinear dielectrics occupying two different halfspaces. In our recent paper \cite{Ohrem_Reichel} we proved the first (to the best of our knowledge) existence result for polychromatic breathers in the context of nonlinear Maxwell's equations without presence of any delta-potentials. The $\chi^{(1)}$-part of the polarization was instantaneous and the $\chi^{(3)}$-part was compactly supported in space and either instantaneous or retarded. Due to the compact support in space both variants of the nonlinearity could be treated with the same variational method. Beyond this result we are not aware of any rigorous treatment of polychromatic breathers in the context of nonlinear Maxwell's equations with time retarded material laws.

\subsection{Examples} \label{subsec:examples}
In two theorems below, we give examples of susceptibility tensors $\chi^{(1)}, \chi^{(3)}$ for which breather solutions of Maxwell's equations \eqref{eq:maxwell},\eqref{eq:material},\eqref{eq:polarization} exist. 
These examples are special cases of a general existence result given later in this chapter, cf. \cref{thm:main}.
Let us note that in contrast to some of the previously mentioned results, our breather solutions are generally polychromatic in nature and the potentials considered are bounded functions. Since our breathers lie in suitable Sobolev spaces they are sufficiently differentiable to solve Maxwell's equations pointwise, and they decay at infinity in an $L^p$-sense. 
They may have higher-order space-derivatives depending on smoothness of the material coefficients in space. They are also infinitely differentiable in time because the material properties do not change over time.  

We begin with an exemplary result for the slab geometry \eqref{eq:material:slab}. 

\begin{theorem}\label{thm:ex:slab}
	Let $T > 0$ denote the temporal period, $\omega \coloneqq \frac{2 \pi}{T}$ the associated frequency, and $c \in (0, 1)$ the speed of travel of the breather solution. Assume that the linear susceptibility tensor is given by $\chi^{(1)}(\bfx, \tau) = g(x) \delta(\tau)I$, and the nonlinear susceptibility tensor $\chi^{(3)}$ is given by \eqref{eq:material:noni} or \eqref{eq:material:nonii} with 
	\begin{align*}
		h(\bfx) = h(x),
		\qquad
		\nu(\tau) = (2 - \abs{\sin(\omega t)}) \bbone_{[0, T]}(\tau).
	\end{align*}
	Moreover, assume that the potentials $g, h \in L^\infty(\R)$ have $X$-periodic backgrounds $g^\per, h^\per \in L^\infty(\R)$ such that
	\begin{align*}
		g(x) - g^\per(x) \to 0,
		\qquad 
		h(x) - h^\per(x) \to 0 \quad \text{ as } x\to \pm \infty
	\end{align*}
	and the inequalities
	\begin{align*}
		g^\per \leq g,
		\qquad
		\esssup_\R g < \tfrac{1}{c^2} - 1, 
		\qquad
		h^\per \leq h,
		\qquad
		h^\per \not \leq 0
	\end{align*}
	are satisfied. Then there exist nonzero time-periodic solutions $\bfD, \bfE, \bfB, \bfH$ of Maxwell's equations \eqref{eq:maxwell},\eqref{eq:material},\eqref{eq:polarization} where $\bfE$ is of the form \eqref{eq:ansatz:slab}.
	They satisfy 
	\begin{align*}
		\partial_t^n \bfE \in W^{2,p}(\Omega; \R^3), 
		\qquad
		\partial_t^n \bfB, \partial_t^n \bfH \in W^{1,p}(\Omega; \R^3),
		\qquad
		\partial_t^n \bfD \in L^{p}(\Omega; \R^3)
	\end{align*}
	for all $n \in \N_0$, $p \in [2, \infty]$ and all domains $\Omega = \R \times [y, y+1] \times [z, z+1] \times [t, t+1]$, with norm bounds independent of $y, z, t$.
\end{theorem}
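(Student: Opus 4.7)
The plan is to apply the general existence result \cref{thm:main} to the particular coefficients of the theorem, so the proof reduces to verifying a small list of structural hypotheses. Substituting the TE-polarized slab ansatz \eqref{eq:ansatz:slab} into the second-order Maxwell system reduces it to the scalar profile equation \eqref{eq:profile_problem:slab}. Since $\chi^{(1)}(\bfx,\tau)=g(x)\delta(\tau)I$, the linear convolution becomes instantaneous multiplication by $g(x)$; expanding $w(x,t) = \sum_{k\in\Z} w_k(x)e^{ik\omega t}$ then decouples the linear operator into the family of Schrödinger operators
\begin{align*}
L_k := -\partial_x^2 + (k\omega)^2\bigl(\tfrac1{c^2} - 1 - g(x)\bigr), \qquad k \in \Z.
\end{align*}
The assumption $\esssup g < 1/c^2 - 1$ makes the potential of $L_k$ uniformly positive, so every $L_k$ with $k \neq 0$ is uniformly elliptic on $L^2(\R)$ with spectrum bounded below by a positive constant.

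Next the Fourier symbol of the retardation kernel must be controlled. Using $|\sin x| = \tfrac{2}{\pi} - \tfrac{4}{\pi}\sum_{m\ge 1}\tfrac{\cos(2mx)}{4m^2-1}$ one computes $\hat\nu(0) = 2T(1-\tfrac{1}{\pi}) > 0$, $\hat\nu(2m\omega) = \tfrac{2T}{\pi(4m^2-1)} > 0$ for every $m \ge 1$, while $\hat\nu((2m+1)\omega) = 0$. Restricting the ansatz space to profiles whose time-Fourier support lies in the even integers therefore makes $\nu\ast\partial_t^2$ invertible with a sign-definite inverse; combined with the uniform ellipticity of the $L_k$, this yields a well-defined symmetric quadratic form, and together with the quartic term $\tfrac14 \int h w^4$ the functional \eqref{eq:example_functional} (or its analogue for the kernel \eqref{eq:material:nonii}) on a suitable Sobolev space of time-periodic, spatially decaying profiles. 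The assumption $h^\per \not\le 0$ then provides a test profile along which the functional attains a negative value, giving the mountain-pass (or Nehari-type) geometry required by \cref{thm:main}.

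The central difficulty is compactness on the unbounded slab: $x$-translations $w(\cdot - kX, \cdot)$ preserve the asymptotic problem, so Palais--Smale sequences at the mountain-pass level need not converge. Here the comparison hypotheses $g^\per \le g$ and $h^\per \le h$ are essential. One first applies the general theorem to the purely $X$-periodic problem obtained by replacing $g, h$ with $g^\per, h^\per$, which yields a nontrivial ground state at energy level $c^\per$; the pointwise domination then shows that the mountain-pass level $c$ of $\fc$ satisfies $c \le c^\per$, and a Lions-type concentration-compactness argument (with $h^\per \not\le 0$ ruling out vanishing at infinity) either produces a strongly convergent subsequence giving a critical point at level $c$, or else establishes the strict inequality $c < c^\per$, which again forces compactness. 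In either case one extracts a nontrivial $w$, whence $\bfE$ is defined via \eqref{eq:ansatz:slab} and $\bfB, \bfH, \bfD$ are recovered from \eqref{eq:material} and time-integration of Faraday's law, the TE structure automatically ensuring the divergence constraints.

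Finally, regularity follows by a standard bootstrap: elliptic theory for $L_k$ mode-by-mode, together with boundedness of $g$ and the Sobolev embeddings, produces $w \in W^{2,p}_\loc$ in the spatial variable with uniform bounds; since the coefficients are independent of $t, y, z$, differentiation in $t$ and translation in $(y,z)$ commute with the equation, giving the stated $W^{2,p}$, $W^{1,p}$, and $L^p$ regularities together with the claimed uniformity in $(y, z, t)$. The main obstacle throughout is the compactness step, which the periodic-background comparison hypotheses are specifically designed to overcome.
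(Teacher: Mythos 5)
Your proposal is correct and follows essentially the same route as the paper: \cref{thm:ex:slab} is stated as a special case of \cref{thm:main}, and the paper's justification consists exactly in verifying (A1)--(A6b) (writing $\calG = g\delta$, $\calG^\per = g^\per\delta$, $\calG^\loc = (g-g^\per)\delta$, computing $\F_k[\calN]$ to confirm nonnegativity and $|k|^{-2}$-decay, and using $g\ge g^\per$, $h\ge h^\per$, $h^\per\not\le 0$ to obtain the (A6b) sign conditions), after which the mountain-pass construction, the periodic comparison, and the bootstrap regularity all come packaged inside \cref{thm:main}. The only slight looseness in your sketch is the ordering of the concentration-compactness dichotomy — in the paper the strict energy gap $c_\mopa < c_\gs^\per$ is established first (\cref{lem:compare:energy}) and then invoked to force convergence of Palais--Smale sequences (\cref{lem:ps_convergence_part3}), rather than the two alternatives being run in parallel — but this does not constitute a gap.
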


The potentials $g, h$ describe the spatial dependency of the polarization field.
In the above theorem we have required them to be asymptotically periodic at $\pm \infty$. 
This periodic structure helps us to overcome noncompactness of embeddings on $\R$.
The assumption on the ordering $g \geq g^\per, h \geq h^\per$ is a standard tool to resolve noncompactness issues also for the nonperiodic problem.
The upper bound $\tfrac{1}{c^2} - 1$ on $g$ and the choice of $\nu$  ensure that \eqref{eq:2nd_order_maxwell} is elliptic. One aspect of the choice of $\nu$ is that its Fourier coefficients are positive. This aspect will become very important in the general result of Theorem~\ref{thm:main}. 
Ellipticity will ensure that the associated energy has a mountain-pass geometry, and a mountain-pass method will be used to construct breather solutions.
Note also that breathers are localized in the $x$-direction (in the $L^p$-sense stated above), but not in $y, z,$ or $t$, which is due to the ansatz \eqref{eq:ansatz:slab}, since all solutions satisfying this ansatz necessarily are independent of $y$ and periodic in both $z$ and $t$.

Similar to \cref{thm:ex:slab} for the slab geometry, below we give an exemplary result with cylindrical geometry \eqref{eq:material:cyl}.

\begin{theorem}\label{thm:ex:cyl}
	Let $T > 0$ be the period of the breather, $c \in (0, 1)$ be its speed, and $g, h \in L^\infty([0, \infty))$ be material coefficients. Define the linear susceptibility by $\chi^{(1)}(\bfx, \tau) \coloneqq g(r)\delta(\tau) I$ and let the nonlinear susceptibility $\chi^{(3)}$ be given by \eqref{eq:material:noni} or \eqref{eq:material:nonii} with $h(\bfx) = h(r), \nu(\tau) = (2 - \abs{\sin(\omega t)})\bbone_{[0, T]}(\tau)$ where $r \coloneqq \sqrt{x^2 + y^2}$,  $\omega \coloneqq \frac{2 \pi}{T}$. Further let 
	\begin{align*}
		\esssup_\R g < \tfrac{1}{c^2} - 1, 
		\qquad
		h\not\leq0.
	\end{align*}
	Then there exist nonzero time-periodic solutions $\bfD, \bfE, \bfB, \bfH$ of Maxwell's equations \eqref{eq:maxwell},\eqref{eq:material},\eqref{eq:polarization} where $\bfE$ is of the form \eqref{eq:ansatz:cyl}.
	They satisfy 
	\begin{align*}
		\partial_t^n \bfE \in W^{2,p}(\Omega; \R^3), 
		\qquad
		\partial_t^n \bfB, \partial_t^n \bfH \in W^{1,p}(\Omega; \R^3),
		\qquad
		\partial_t^n \bfD \in L^{p}(\Omega; \R^3)
	\end{align*}
	for all $n \in \N_0$, $p \in [2, \infty]$ and all domains $\Omega = \R^2 \times [z, z+1] \times [t, t+1]$, with norm bounds independent of $z, t$.
\end{theorem}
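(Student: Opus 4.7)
My plan is to apply the mountain-pass scheme discussed in the introduction directly to the cylindrical profile problem, with Palais-Smale compactness supplied by the cylindrical symmetry itself. First I substitute the TE-ansatz \eqref{eq:ansatz:cyl} into \eqref{eq:2nd_order_maxwell} to obtain the scalar profile equation \eqref{eq:profile_problem:cyl} for $w = w(r, s)$ with $s = t - z/c$. Since $\chi^{(1)}(\bfx, \tau) = g(r) \delta(\tau) I$, the linear convolution reduces to pointwise multiplication by $g(r)$, and after normalising $c_0^2 = 1$ the equation reads
\begin{align*}
\bigl(-\partial_r^2 - \tfrac1r \partial_r + \tfrac1{r^2}\bigr) w - \bigl(\tfrac1{c^2} - 1 - g(r)\bigr) \partial_s^2 w + h(r) \, \partial_s^2 N(w) = 0,
\end{align*}
with $N(w) = \nu \ast w^3$ in case \eqref{eq:material:noni} and $N(w) = (\nu \ast w)^3$ in case \eqref{eq:material:nonii}. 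A direct computation using the classical Fourier series of $\abs{\sin}$ yields $\hat\nu_k \coloneqq \int_0^T \nu(\tau) \ee^{-\ii k \omega \tau}\der\tau = 0$ for odd $k$, while $\hat\nu_k > 0$ for every even $k \in \Z$. The cubic nonlinearity preserves the subspace of $T$-periodic functions supported on even frequencies $\ee^{\ii 2k\omega s}$ (equivalently, $T/2$-periodic functions), and on this subspace $\nu \ast$ is invertible and positive-definite; I therefore restrict the variational problem to this invariant subspace. Additionally substituting $v = \nu \ast w$ in case \eqref{eq:material:nonii} (and taking $v = w$ in case \eqref{eq:material:noni}) converts both cases into the uniform reduced equation
\begin{align*}
\calL v \coloneqq -(\nu \ast \partial_s^2)^{-1}\Bigl[\bigl(-\partial_r^2 - \tfrac1r \partial_r + \tfrac1{r^2}\bigr) v - \bigl(\tfrac1{c^2} - 1 - g(r)\bigr) \partial_s^2 v\Bigr] = h(r) \, v^3,
\end{align*}
posed on cylindrically-symmetric, time-zero-mean functions of this even-frequency type that vanish at $r = 0$.

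Next I set up the variational framework. The operator $\calL$ is symmetric on $L^2(r \, \der r \, \der s)$ and positive-definite, because $-(\nu \ast \partial_s^2)^{-1}$ and $-\partial_s^2$ are positive-definite Fourier multipliers on the admissible subspace, the coefficient $\tfrac1{c^2} - 1 - g(r) \geq \tfrac1{c^2} - 1 - \esssup g > 0$ is uniformly positive by hypothesis, and the spatial part $-\partial_r^2 - \tfrac1r \partial_r + \tfrac1{r^2}$ is nonnegative (it is $-\Delta_{\R^2}$ restricted to the angular mode $\ell = 1$). Let $\calE$ denote the completion of suitable test functions under $\norm{v}_\calE^2 \coloneqq \ip{v}{\calL v}_{L^2(r \, \der r \, \der s)}$; on each time-Fourier mode this norm controls a weighted $H^1(\R^2)$-norm of the radial profile, so $\calE$ embeds continuously into $L^p(r \, \der r \, \der s)$ for every $p \in [2, \infty)$. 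The associated functional, in the spirit of \eqref{eq:example_functional},
\begin{align*}
\fc(v) = \tfrac12 \norm{v}_\calE^2 - \tfrac14 \int_0^\infty \!\! \int_0^T h(r) \, v^4 \, r \, \der r \, \der s,
\end{align*}
has critical points solving $\calL v = h v^3$. Mountain-pass geometry is immediate: $\fc(v) \geq \tfrac12 \norm{v}_\calE^2 - C \norm{h}_\infty \norm{v}_\calE^4$ near the origin, while $h \not\leq 0$ permits choosing a test function $\varphi \in \calE$ with $\int h \varphi^4 \, r \, \der r \, \der s > 0$, so that $\fc(t\varphi) \to -\infty$ as $t \to \infty$.

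The main obstacle is verifying the Palais-Smale condition, and here the cylindrical geometry plays the decisive role. Viewing $v(r, s)$ as a radially symmetric function on $\R^2$ that is additionally periodic in $s$, a Strauss-type radial-compactness argument yields the compact embedding $\calE \embeds\embeds L^p(r \, \der r \, \der s)$ for every $p \in (2, \infty)$; this is precisely why, in contrast to \cref{thm:ex:slab}, neither an ordering condition nor asymptotic periodicity of $g, h$ is required here. With this compactness, Palais-Smale sequences converge strongly along subsequences, and the mountain-pass theorem delivers a nontrivial critical point $v_\ast \in \calE$. Undoing the substitution yields the profile $w_\ast$, which is $T/2$-periodic and hence also $T$-periodic, and an elliptic bootstrap in $(r, s)$ combined with differentiation in $t$ (the material coefficients are time-independent, so each $\partial_t^n w_\ast$ satisfies the same equation) upgrades the regularity to the claimed $\partial_t^n \bfE \in W^{2, p}(\Omega; \R^3)$. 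The fields $\bfB, \bfH, \bfD$ are then recovered from $\bfE$ via Faraday's law and the material laws \eqref{eq:material} as explained after \eqref{eq:2nd_order_maxwell}, yielding the stated $W^{1, p}$- and $L^p$-bounds uniformly in $z, t$.
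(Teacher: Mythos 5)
Your high-level strategy — reduce to the cylindrical profile equation, restrict to the even nonzero time-frequencies on which $\nu\ast\partial_t^2$ is invertible (this is exactly $\regularK$ here), set up a mountain-pass functional, and obtain Palais--Smale compactness from the cylindrical symmetry — mirrors the paper's, which derives this theorem as a special case of \cref{thm:main} (verifying (A1)--(A5), with (A6) vacuous for cylinders) and carries out the cylindrical analysis in \cref{sec:cylinder_modifications}. However, two steps are asserted where the real work has to be done. First, the claim that $\calE$ embeds continuously and compactly into $L^p(r\der r\der s)$ for \emph{every} $p\in[2,\infty)$ is false: the torus contributes an extra dimension, so $\calE$ behaves like a weighted $H^1$-space on the three-dimensional $\R^2\times\T$, and with $\F_k[\calN]\sim\abs{k}^{-2}$ the correct critical exponent is $p^\star=6$. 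Passing from mode-wise $H^1(\R^2)$-control to a spacetime $L^p$-estimate is exactly where the work of \cref{lem:cyl:Lp_embedding} (Bessel-function bounds, Riesz--Thorin interpolation, and uniform approximation by truncated operators) is needed; the ``each mode controls $H^1(\R^2)$'' reasoning does not sum over $k$. This over-claim is nonfatal since only $p=4<6$ is used, but the embedding and its compactness must be proved, not inferred from two-dimensional Sobolev theory.

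Second, the regularity claim $\partial_t^n\bfE\in W^{2,p}(\Omega;\R^3)$ for all $p\in[2,\infty]$ is not a standard elliptic bootstrap: the equation is nonlocal in time via $\nu\ast$, has a singular coefficient $\tfrac{1}{r^2}$, and the spatial domain has the singular point $r=0$. The paper's argument uses a bootstrap in fractional time derivatives $\fracDT{s}u$ together with a fractional Leibniz rule (\cref{lem:fractional_triple_product,lem:regularity:time_derivative}), then exploits \eqref{eq:4dlaplace} to rewrite the singular radial operator as a four-dimensional Laplacian, carrying out separate bootstraps near $r=0$ (via the auxiliary $\R^4$-radial function $U=u/\abs{X}$) and near $r=\infty$ (via cutoffs and $L^p$-boundedness of the Riesz transform), see \cref{prop:regularity:cyl:W2p}; none of this appears in your sketch. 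Finally, for the nonlinearity \eqref{eq:material:nonii}, inverting $\nu\ast$ only recovers $\projR[w]$; the singular-frequency part $\projS[w]$ — including a generically nonzero $k=0$ component, since $u^3$ can have nonzero mean even when $u$ lives on $\regularK$ — must be obtained from the separate linear equation in \eqref{eq:w_reconstruction:ii} and its cylindrical analogue, which your ``undoing the substitution'' step silently skips.
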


In contrast to \cref{thm:ex:slab}, in \cref{thm:ex:cyl} we do not need any asymptotics for the potentials $g, h$. This is because the cylindrical setting itself comes with compactness, as we discuss in \cref{sec:cylinder_modifications}. To illustrate this, recall that the Sobolev embedding $H^1(\R^2) \embeds L^p(\R^2)$ for $p \in [2, \infty)$ becomes compact when restricted to radially symmetric functions. Lastly, the ansatz \eqref{eq:ansatz:cyl} is periodic in both $z$ and $t$, so breather solutions in the cylindrical setting decay in the $x$ and $y$ directions orthogonal to the direction of propagation.

\subsection{Main theorem}
Before stating the main theorem, we fix some notation.

\subsubsection{Measures on torus and real line, periodic reduction of a measure}
Since breathers are time-periodic, the natural time domain is the torus $\T \coloneqq \R/_{T \Z}$ with period $T$, and we denote the canonical projection by $\projT \colon \R \to \T$. With $\calM(\T)$, $\calM(\R)$ we denote the set of all $\R$-valued measures on the Borel $\sigma$-algebra of $\T$ and $\R$, respectively. 
The push-forward map $\projT^*: \calM(\R)\to \calM(\T)$ is defined as follows: for $\lambda\in \calM(\R)$ we set $\projT^*(\lambda)\in \calM(\T)$ by $\projT^*(\lambda)(E) = \lambda(\projT^{-1}(E))$ for any Borel subset $E\subseteq \T$. The new measure $\projT^*(\lambda)\in\calM(\T)$ is called the periodic reduction of $\lambda$. In this way, the torus is equipped with the measure $\Der t = \frac{1}{T}\projT^*(\bbone_{[0,T]}\der \tau)$, where $\der \tau$ denotes the Lebesgue measure on $\R$.

\subsubsection{Instantaneous vs. retarded $\chi^{(1)}$-contribution} While the nonlinear susceptibility tensor $\chi^{(3)}$ necessarily represents a retarded material response, cf. \eqref{eq:material:noni} or \eqref{eq:material:nonii}, the $\chi^{(1)}$-contribution to the material response may be instantaneous or retarded. The first case is given by $\chi^{(1)}(\bfx,\tau)=g(x)\delta(\tau)I$ or $\chi^{(1)}(\bfx,\tau)=g(r)\delta(\tau)I$ from \cref{subsec:examples}. The second case may be written in the form $\chi^{(1)}(\bfx,\tau)\der \tau = \Der G(\bfx)(\tau) I$ where for fixed $\bfx\in\R^3$ we have that $G(\bfx)\in \calM(\R)$ is an $\R$-valued Borel measure. Mathematically, the second case comprises the first and hence in the following an instantaneous $\chi^{(1)}$-contribution is subsumed in the retarded case.  

\subsubsection{Fourier transform}
Let us fix a convention for Fourier series and Fourier transform. For a time-periodic function $v \colon \T \to \C$ we define its Fourier coefficients by $\hat v_k = \F_k[v] = \int_\T v \overline{e_k} \der t$ with $e_k(t) \coloneqq \ee^{\ii k \omega t}$, $\omega \coloneqq \frac{2 \pi}{T}$. Thus the inverse is $v(t) = \F^{-1}_t[\hat v_k] = \sum_{k \in \Z} \hat v_k e_k(t)$. For a function $v$ depending on space and $T$-periodically on time, $\hat v$ will always denote the (discrete) Fourier transform in time. In the same way we define the discrete Fourier transform in time $\hat\lambda$ of a measure $\lambda\in \calM(\T)$. Finally, a function $v:\T\to\R$ or a measure $\lambda\in\calM(\T)$ is called  positive definite if the sequence $\hat v=(\hat v_k)_{k\in\Z}$ or $\hat\lambda=(\hat \lambda_k)_{k\in\Z}$, respectively, consists of nonnegative entries.

\medskip

Similarly we fix the notion of the spatial (continuous) Fourier transform of a space-dependent function $v \colon \R^d \to \C$, writing $\F_\xi[v] = \int_{\R^d} v(x) \overline{\ee^{\ii x \cdot \xi}} \derf[d] x$ with inverse $\F^{-1}_x[v] = \int_{\R^d} v(\xi) \ee^{\ii x \cdot \xi} \derf[d] \xi$. The spatial (continuous) Fourier transform of a function depending on both space and time is defined analogously, and we omit indices of $\F, \F^{-1}$ when they are clear from the context.

\subsubsection{Cylindrical and slab geometry} We say that a map $A:\R^3\to Y$ possesses cylindrical symmetry if $A(\bfx)=A(\bftx)$ for all $\bfx=(x,y,z), \bftx=(\tilde x,\tilde y,\tilde z)\in\R^3$ with $x^2+y^2={\tilde x}^2+{\tilde y}^2$. In this case we write $A(\bfx)=A(r)$ with $r=\sqrt{x^2+y^2}$. Likewise we say that a map $A:\R^3\to Y$ possesses slab symmetry if $A(\bfx)=A(\bftx)$ for all $\bfx=(x,y,z), \bftx=(x,\tilde y,\tilde z)\in\R^3$ and write $A(\bfx) = A(x)$ in this case.

\begin{theorem}\label{thm:main}	
	Let $T > 0$ denote the temporal period, $\omega \coloneqq \frac{2 \pi}{T}$ the associated frequency, and $c \in (0, 1)$ the speed of travel of the breather solution. We make the following assumptions:
	\begin{enumerate}
        \item[(A1)]\label{ass:first} The linear susceptibility tensor $\chi^{(1)}$ is given by $\chi^{(1)}(\bfx, \tau)\der\tau = \der G(\bfx)(\tau) I$ where $G\colon \R^3 \to \calM(\R)$ is measurable. The nonlinear susceptibility tensor  $\chi^{(3)}$ is given by \eqref{eq:material:noni} or \eqref{eq:material:nonii} where $h\in L^\infty(\R^3)$ and $\nu\in \calM(\R)$. 

        \item[(A2)] $G$ and $h$ both have either cylindrical or slab geometry.
        
        \item[(A3)] $\sup_{\bfx \in \R^3} \norm{G(\bfx)}_{\calM(\R)} < \infty$ and $h \not \leq 0$.
        
		\item[(A4)] \label{ass:upper:bound}
		The periodic reduction $\calG(\bfx)$ of $G(\bfx)$ is even in time for all $\bfx\in \R^3$ and satisfies $\sup_{\substack{\bfx \in \R^3, k \in \Z}} \F_k[\calG(\bfx)] < \tfrac{1}{c^2} - 1$.
		
		\item[(A5)]\label{ass:nonlin:decay} The periodic reduction $\calN$ of $\nu$ is even in time, $\not =0$, and $\abs{k}^{-\beta} \lesssim \F_k[\calN] \lesssim \abs{k}^{-\alpha}$ for all $k \in \Z\setminus\{0\}$ with $\F_k[\calN] \not =0$ and some $\beta \geq \alpha > \alpha^\star$ where $\alpha^\star = 1$ in the slab geometry and $\alpha^\star = \tfrac{3}{2}$ in the cylindrical geometry. 

		\item[(A6)]\label{ass:nonlin}\label{ass:last} In case of the slab geometry, one of the following holds in addition:
		\begin{enumerate}
			\item[(A6a)]\label{ass:nonlin:loc} $h(x) \to 0$ as $x \to \pm\infty$,
			\item[(A6b)]\label{ass:nonlin:per+loc}
			$\calG(x) = \calG^\per(x) + \calG^\loc(x)$ and $h(x) = h^\per(x) + h^\loc(x)$ where $\calG^\per(x), h(x)$ are periodic with common period, and we have $\norm{\calG^\loc(x)}_{\calM(\T)} \to 0$ and $h^\loc(x) \to 0$ as $x \to \pm \infty$. Moreover, $\calG^\loc(x)$ is positive definite for all $x \in \R$ and $h^\loc \geq 0, h^\per \not\leq 0$ hold.
		\end{enumerate}
	\end{enumerate}
	Under these assumptions, there exists a nontrivial breather solution $\bfD, \bfE, \bfB, \bfH$ of Maxwell's equations \eqref{eq:maxwell},\eqref{eq:material}. It satisfies 
	\begin{align*}
	\partial_t^n \bfE \in W^{2,p}(\Omega; \R^3), 
	\qquad
	\partial_t^n \bfB, \partial_t^n \bfH \in W^{1,p}(\Omega; \R^3),
	\qquad
	\partial_t^n \bfD \in L^{p}(\Omega; \R^3)
	\end{align*}
	for all $n \in \N_0$, $p \in [2, \infty]$ and all domains $\Omega$ that are of the form $\Omega = \R \times [y, y+1] \times [z, z+1] \times [t, t+1]$ in the slab case and $\Omega = \R^2 \times [z, z+1] \times [t, t+1]$ in the cylindrical case, with norm bounds independent of $y, z, t$.
\end{theorem}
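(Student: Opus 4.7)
The plan is to use the ansatzes \eqref{eq:ansatz:slab}, \eqref{eq:ansatz:cyl} to reduce Maxwell's equations to the scalar profile problems \eqref{eq:profile_problem:slab}, \eqref{eq:profile_problem:cyl} for $w\colon\R\times\T\to\R$ (slab) or $w\colon(0,\infty)\times\T\to\R$ (cylinder), and then solve these by the mountain pass theorem. First I would expand $w$ in a Fourier series in $t$: since convolution in $t$ with $G(\bfx)$ reduces to multiplication by $\F_k[\calG(\bfx)]$ on the periodic reduction, the symbol of the operator acting on the $k$-th mode $\hat w_k$ becomes $-\partial_x^2 - k^2\omega^2\bigl(1-\tfrac1{c^2}+\F_k[\calG(\bfx)]\bigr)$ (and analogously in the cylindrical case with the Bessel-type radial operator). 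By assumption (A4), the bracket is uniformly bounded away from zero with a definite sign, so termwise in $k$ the operator is strictly elliptic. Inverting $\nu\ast\partial_t^2$ by the Fourier multiplier $-1/(k^2\omega^2\F_k[\calN])$ (which is well-defined on the orthogonal complement of the kernel, and on that kernel the equation is satisfied trivially for the corresponding Fourier mode) leads to \eqref{eq:example} and to the functional $\fc(w)$ from \eqref{eq:example_functional}, to be studied on a Hilbert space $\calH$ defined by the quadratic form; the exponent threshold $\alpha>\alpha^\star$ in (A5) is precisely what is needed for $\calH\hookrightarrow L^4$ so that the quartic term $\tfrac14\int h\, w^4$ is well-defined and of class $C^1$.

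Next I would verify the mountain pass geometry of $\fc$. Positivity of the quadratic part near $0$ in the norm of $\calH$ follows directly from (A4)--(A5), so $\fc(w)\gtrsim \|w\|_\calH^2 - C\|w\|_\calH^4$ gives a uniform positive lower bound on a small sphere. For a ray leading out of the well, one picks any test $w_0$ with $\int h\, w_0^4>0$ — which is possible since $h\not\le0$ — and observes that $\fc(tw_0)\to-\infty$ as $t\to\infty$. The Palais--Smale value $c_{\mathrm{mp}}>0$ is then well-defined, and the usual deformation produces a $(PS)_{c_{\mathrm{mp}}}$-sequence bounded in $\calH$.

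The main obstacle is the compactness of such Palais--Smale sequences, which has to be treated separately in the three settings. In the cylindrical case, the ansatz \eqref{eq:ansatz:cyl} forces $w(r,\cdot)$ to incorporate a radial profile on $\R^2$, and one exploits the compact embedding of radial $H^1$ into $L^p(\R^2)$ for $p\in[2,\infty)$ (augmented by the weight $r^{-2}$ coming from the $\tfrac1{r^2}$ term) to pass to the limit directly. In the slab case with (A6a), the decay $h(x)\to 0$ at infinity makes the nonlinear term compact: a nonvanishing lemma of Lions-type combined with translations in $x$ shows that any weak limit is nontrivial (otherwise the nonlinearity contributes arbitrarily little and the Palais--Smale level would be $0$). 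Under (A6b), the key step is a comparison with the purely periodic auxiliary problem obtained by replacing $\calG,h$ by $\calG^\per,h^\per$; using that $\calG^\loc$ is positive definite and $h^\loc\ge0$ one gets $\fc\le \fc^\per$ pointwise, hence $c_{\mathrm{mp}}\le c_{\mathrm{mp}}^\per$, and a standard splitting/translation argument shows that if a Palais--Smale sequence vanishes locally then the limit level equals $c_{\mathrm{mp}}^\per$, contradicting the strict inequality forced by positive definiteness of $\calG^\loc$ acting on a periodic mountain pass critical point. Either way one obtains a nontrivial weak $\calH$-limit $w$, which is a critical point of $\fc$.

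Finally I would bootstrap regularity from the profile equation. Since the equation is elliptic in $x$ (slab) or $(x,y)$ (cylindrical) in each Fourier mode, standard $L^p$ and Schauder-type estimates give $\hat w_k\in W^{2,p}$ on any fundamental domain with bounds summable in $k$ after multiplication by $|k|^n$ — this exploits the polynomial decay of $\F_k[\calN]$ provided by (A5) and the fact that the material is time-independent, so time differentiation commutes with the operator and produces infinitely many derivatives in $t$. Assembling $\bfE$ from \eqref{eq:ansatz:slab}/\eqref{eq:ansatz:cyl}, recovering $\bfB$ by time-integrating $\nabla\times\bfE=-\bfB_t$, and reading off $\bfH,\bfD$ from the material law \eqref{eq:material}, one checks by construction that the ansatz is divergence-free, that $\nabla\cdot\bfB=0$ is preserved in time, and that all four Maxwell equations \eqref{eq:maxwell} are satisfied pointwise with the claimed regularity in the stated $\Omega$.
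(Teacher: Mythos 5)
Your overall strategy — reduce to the scalar profile problems via the TE ansatzes, invert $\nu\ast\partial_t^2$ modewise to obtain an effective elliptic problem, run a mountain pass on the Hilbert space defined by the quadratic form, handle compactness separately for the cylindrical case and the slab cases (A6a)/(A6b), and then bootstrap regularity and reconstruct the fields — is the same as the paper's, and the role you assign to (A4), (A5) and the threshold $\alpha^\star$ is correct. However, there is a concrete gap in your treatment of the singular Fourier modes, i.e.\ the indices $k$ with $\F_k[\calN]=0$, on which $\nu\ast\partial_t^2$ is \emph{not} invertible. You assert that on the kernel ``the equation is satisfied trivially for the corresponding Fourier mode.'' That is only true for the nonlinearity \eqref{eq:material:noni}, $N(w)=\calN\ast w^3$: there $\projS[N(w)]=\projS[\calN\ast w^3]=0$, so the singular modes decouple and one may indeed set $\projS[w]=0$. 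For the second nonlinearity \eqref{eq:material:nonii}, $N(w)=(\calN\ast w)^3$, one has $\projS[(\calN\ast w)^3]\neq 0$ in general, so the singular components of $w$ are \emph{not} free; they are coupled to $u\coloneqq\calN\ast w$ through the equation
\begin{align*}
\left(-\partial_x^2+\partial_t^2\bigl(1-\tfrac1{c^2}+\calG\ast\bigr)\right)\projS[w]+h\,\partial_t^2\projS[u^3]=0,
\end{align*}
which must be solved as a separate, genuinely nontrivial linear elliptic problem after the variational problem for $u$ has been solved (this is what the paper's \eqref{eq:w_reconstruction:ii} and \cref{prop:regularity:w} accomplish; note in particular the change of variable $u=\calN\ast w$, which your proposal never introduces). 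Without this step, the $w$ you reconstruct from the mountain-pass solution would only satisfy the regular-mode part of \eqref{eq:profile_problem:slab} (resp.\ \eqref{eq:profile_problem:cyl}), and the assembled $(\bfE,\bfB,\bfH,\bfD)$ would not solve Maxwell's equations for the material law \eqref{eq:material:nonii}.

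Two smaller issues worth flagging. For (A6a) you invoke ``a nonvanishing lemma of Lions-type combined with translations in $x$,'' but translations do not preserve the problem when $h$ decays, and none are needed: decay of $h$ makes the map $u\mapsto h\,u^3$ compact into $L^{4/3}$, so a bounded Palais--Smale sequence converges strongly in $\fcd$ and the limit is nontrivial because the level is positive. Also, the compact embedding of radial $H^1(\R^2)$ into $L^p(\R^2)$ holds for $p\in(2,\infty)$, not $p\in[2,\infty)$; the paper's \cref{lem:cyl:Lp_embedding} is careful to exclude $p=2$, and this matters because the $L^2$-piece of the argument must be handled differently.
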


\begin{remark} Our assumptions \ref{ass:first}--\ref{ass:nonlin:decay} on the structure of the linear and nonlinear retardation kernels can be seen as a systematic attempt to find out what can be done in a variational setting. The main assumptions are expressed via the Fourier coefficients of $\calG(\bfx)$ and $\calN$. The fact that both $\calG(\bfx)$ and $\calN$ have real Fourier coefficients stems from their evenness in time which could be understood physically as a balance of loss and gain within one period of time. In assumption \ref{ass:upper:bound} the upper bound on $\F_k[\calG(\bfx)]$ can be achieved by taking a sufficiently small propagation speed $c$. It is exactly this assumption which makes the linear operator in \eqref{eq:example} elliptic, and combined with positive definiteness of $\calN$ it makes the quadratic part in \eqref{eq:example_functional} positive definite.
\end{remark}

\begin{remark} \label{rem:infinitely} 
	If in the setting of \cref{thm:main} the set $\regularK \coloneqq \set{k \in \Z\setminus\set{0} \colon \F_k[\calN] \neq 0}$ is infinite then we moreover have existence of infinitely many nontrivial breathers with the stated properties. 
\end{remark}

\begin{remark}
    The sign assumptions on $\calG^\loc, h^\loc$ in \ref{ass:nonlin:per+loc} of \cref{thm:main} yield a strict relation between the mountain-pass energy level of the problem compared to the energy of the ``periodic'' problem, (i.e. with $\calG, h$ replaced by $\calG^\per, h^\per$), see \cref{lem:compare:energy} for a precise formulation. This energy inequality gives us some compactness which is crucial for showing existence of breathers. The examples \cref{thm:ex:slab,thm:ex:cyl} satisfy \ref{ass:first}--\ref{ass:last}. For \ref{ass:nonlin:decay} this is true because
	\begin{align*}
		\F_k[\calN] = \F_k[2 - \abs{\sin(\omega t)}] = \begin{cases}
			2 - \frac{2}{\pi}, & k = 0,
			\\ 0, & k \text{ odd}, 
			\\ \frac{2}{\pi (k^2 - 1)}, & k \neq 0 \text{ even}.
		\end{cases}
	\end{align*}
\end{remark}

Breather solutions are more regular when the material coefficients $\calG, h$ have higher regularity. For $\Omega \subseteq \R^4$ we denote by $C_b^j(\Omega; \R^3)$ the space of $j$-times differentiable functions mapping into $\R^3$ with bounded derivatives, and abbreviate $\tilde C^j_b(\Omega; \R^3) \coloneqq W^{j, 2}(\Omega; \R^3) \cap C^j_b(\Omega; \R^3)$.

\begin{corollary}	
	If in the context of Theorem~\ref{thm:main} we additionally have
	\begin{enumerate}
		\item[(R)]\label{ass:regularity} 
		$g \in C_b^l(\R^3; \calM(\R)), h \in C_b^l(\R^3)$ for some $l \in \N_0$,
	\end{enumerate}
	then the regularity improves to 
	\begin{align*}
		\partial_t^n \bfE \in \tilde C_b^{2+l}(\Omega; \R^3),
		\quad  
		\partial_t^n \bfB, \partial_t^n \bfH \in \tilde C_b^{1+l}(\Omega; \R^3),
		\quad
		\partial_t^n \bfD \in \tilde C_b^{l}(\Omega; \R^3)
	\end{align*}
	with norm bounds independent of $y, z, t$. 
\end{corollary}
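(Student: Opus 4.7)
The plan is a bootstrap argument for spatial regularity that piggybacks on the already-complete time regularity provided by \cref{thm:main}. The starting point is the scalar profile equation \eqref{eq:profile_problem:slab} or \eqref{eq:profile_problem:cyl}, which after isolating the purely elliptic spatial part takes the form
\begin{align*}
	\mathcal{A} w = F[w],
	\qquad
	\mathcal{A} = \begin{cases}
		-\partial_x^2 & \text{in the slab case,} \\
		-\partial_r^2 - r^{-1}\partial_r + r^{-2} & \text{in the cylindrical case,}
	\end{cases}
\end{align*}
with $F[w]$ collecting $\tfrac{1}{c^2}\partial_t^2 w - \eps_0\mu_0\partial_t^2 w - \mu_0\partial_t^2 P(w)$. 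The key structural facts are that $\mathcal{A}$ has constant coefficients in $\bfx$ and commutes with $\partial_t$ and with every time-convolution appearing in $P(w)$; the spatial $\bfx$-dependence of $F[w]$ enters only through the coefficients $G$ and $h$; and by assumption $G, h \in C^l_b$, so differentiating $F[w]$ up to $l$ times in space costs only a bounded multiplicative factor.

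I then argue by induction on $k \in \{0, \ldots, l\}$ that $\partial_t^n \partial_\bfx^\gamma w \in L^p(\Omega) \cap C^0_b(\Omega)$ for all $n \in \N_0$, $p \in [2, \infty]$, and all multi-indices $\gamma$ with $\abs{\gamma} \leq k+2$, with bounds uniform in the translation parameters appearing in $\Omega$. The base case $k=0$ is exactly the conclusion of \cref{thm:main}, complemented by Morrey's embedding $W^{2,\infty} \subset C^{1,1}_b$ and one application of $\mathcal{A}w = F[w]$ to upgrade to $C^2_b$. For the inductive step, I apply $\partial_t^n \partial_\bfx^\gamma$ with $\abs{\gamma} = k+1$ to $\mathcal{A}w = F[w]$; by the product rule, the assumption $G, h \in C^l_b$, and the inductive hypothesis, the resulting right-hand side stays in $L^p \cap C^0_b$ uniformly in the translation parameters, whereupon standard interior $L^p$- and Schauder estimates for $\mathcal{A}$ yield $\partial_t^n \partial_\bfx^\gamma w \in W^{2,p} \cap C^2_b$ with the required uniformity. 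After $l$ steps I obtain $\partial_t^n w \in \tilde C^{l+2}_b$ for every $n \in \N_0$.

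Transferring regularity to the full Maxwell fields is then routine: via \eqref{eq:ansatz:slab}/\eqref{eq:ansatz:cyl}, $\bfE$ inherits $\tilde C^{l+2}_b$ directly from $w$; the magnetic fields $\bfB, \bfH$ are obtained from $\partial_t \bfB = -\nabla \times \bfE$ by integration in $t$ (controlled through time-periodicity after subtracting the temporal mean), so they lose one spatial derivative and lie in $\tilde C^{l+1}_b$; and $\bfD = \eps_0 \bfE + \bfP(\bfE)$ loses up to $l$ spatial derivatives through the multiplications by $G$ and $h$ inside $\bfP$, giving $\bfD \in \tilde C^l_b$. I expect the main obstacle to be the cylindrical case at $r=0$, where $\mathcal{A}$ is formally singular; this is best circumvented by carrying out the bootstrap directly on the Cartesian vector field $\bfE$ in the $(x,y)$-plane using the regular elliptic operator $-\Delta_{(x,y)}$ acting on the smooth TE-polarized extension provided by the ansatz, thereby avoiding weighted-space technicalities at the cylinder core.
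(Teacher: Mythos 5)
Your bootstrap strategy — differentiate the reduced equation repeatedly in space, absorb derivatives falling on $G$ and $h$ using \ref{ass:regularity}, and upgrade one spatial derivative per induction step — is in substance the same mechanism the paper uses. Two remarks on how the paper actually implements it and where your sketch would need work.

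First, the paper does not treat the Corollary as a separate argument built on top of \cref{thm:main}; the $l\geq 1$ regularity claim is established inside the same propositions that handle $l=0$ (Part~3 of the proof of \cref{prop:regularity:W2p}, \cref{prop:regularity:w}, and Parts~3b/4b of \cref{prop:regularity:cyl:W2p}, \cref{prop:regularity:cyl:w}). In the slab case there are no ``interior $L^p$ or Schauder estimates'' to invoke: from $\fracDT{s} u_{xx} = \fracDT{2+s}(V(x)u - h(x)\projR[\calN\ast u^3])$ one simply reads off the top-order derivative and iterates $\partial_x$ with the Leibniz rule; the time-regularity (\cref{lem:regularity:time_derivative}, \cref{lem:fractional_triple_product}) has already been nailed down and is what controls the nonlinear term, so no elliptic machinery is needed there. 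Your wording suggests a more black-box elliptic bootstrap than is actually required on the slab side.

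Second, for the cylindrical case your alternative — working with the Cartesian components of $\bfE$ under $-\Delta_{(x,y)}$ — is a valid way to sidestep the singularity of $-\partial_r^2 - r^{-1}\partial_r + r^{-2}$ at $r=0$, and in spirit it exploits exactly the same smoothing of $w/r$ at the origin that the paper exploits via the identity $\partial_r^2+\tfrac1r\partial_r-\tfrac1{r^2} = \tfrac{1}{r^2}\partial_r r^3\partial_r\tfrac1r$, which turns the problem into a scalar semilinear equation for $U(X,t)=u(|X|,t)/|X|$ governed by the 4d Laplacian. The paper's device has the advantage of staying scalar and of yielding, explicitly, the behavior of $(\tfrac1r\partial_r)^j(\tfrac{u}{r})$ and $\partial_r^n(\tfrac{u}{r})$ at $r=0$ (in particular the vanishing of odd derivatives and the $\landauo(1/r)$ control in Part~4b), which is precisely what is later needed in the proof of \cref{thm:main} (cylindrical case, Part~2) to show that $D_\bfx^n\bfE$ extends continuously across $r=0$. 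This translation step from $w$-regularity to $\bfE$-regularity is genuinely delicate in the cylindrical case and your sketch skips it; if you instead bootstrap directly on $\bfE$ you must handle the nonlocal nonlinearity $\partial_t^2 P(\bfE)$ in vector form and then still recover the uniform $\tilde C^{1+l}_b$ and $\tilde C^l_b$ bounds for $\bfB,\bfH,\bfD$ from the resulting Cartesian estimates, so the omission is not cost-free. Your remark on $\bfD$ losing spatial regularity through multiplication by $G,h$ is correct, though the mechanism is simply that a product of a $C^l_b$ coefficient with a $C^{l+2}_b$ function is only $C^l_b$, not that derivatives are ``lost.''
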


\subsection{Outline of paper}

We begin by investigating the slab geometry \eqref{eq:material:slab}. In \cref{sec:variational_problem} we convert Maxwell's equations into the Euler-Lagrange equation of a suitable Lagrangian functional, and show that this functional has mountain-pass geometry. 
Using the mountain-pass theorem, in \cref{sec:ground_state_existance} we show that the Euler-Lagrange equation admits a ground state solution. 
The convergence of Palais-Smale sequences approaching the ground state level is unclear in general because the spatial domain is unbounded, and thus our arguments depend on the particular form of the potentials in \ref{ass:nonlin}. For \ref{ass:nonlin:loc}, the nonlinearity is compact which makes this the easiest case. For \ref{ass:nonlin:per+loc} we first rely on translation arguments in space for the purely periodic case. Then we use comparison arguments for the perturbed periodic case. After having shown existence and multiplicity of breathers, we investigate their regularity in \cref{sec:regularity}. Finally, \cref{sec:cylinder_modifications} details the arguments for the cylindrical geometry \eqref{eq:material:cyl} and highlights the differences to the slab geometry.


\section{variational problem}
\label{sec:variational_problem}

From now on, we always assume that the assumptions of \cref{thm:main} are satisfied. We transform \eqref{eq:profile_problem:slab} into a problem for a surrogate variable $u$, which we then treat using the mountain pass method. 
We only consider the slab problem \eqref{eq:profile_problem:slab} as the cylindrical problem \eqref{eq:profile_problem:cyl} can be treated similarly. 
In \cref{sec:cylinder_modifications} we discuss the differences between the slab and the cylindrical problem, and how to treat the latter. 

Using the periodic reduction $\calG(x), \calN$ of $g(x),  \nu$ we can rewrite the scalar polarization \eqref{eq:scalar_polarization:noni} as
\begin{align*}
	P(w)(x,t)
	&= \epsilon_0 \int_0^\infty w(x, t - \tau) \der g(x)(\tau)
	+ \epsilon_0 h(x) \int_0^\infty w(x, t - \tau)^3 \der \nu(\tau)
	\\ &= \epsilon_0 \int_\T w(x, t - \tau) \der \calG(x)(\tau)
	+ \epsilon_0 h(x) \int_\T w(x, t - \tau)^3 \der \calN(\tau)
\intertext{since $w$ is $T$-periodic in $t$. We abbreviate this by writing}
	P(w) 
	&= \epsilon_0 (\calG \ast w) + \epsilon_0 h (\calN \ast w^3)
\end{align*}
where $\ast$ denotes convolution of a measure with a function on $\T$. Similarly, the polarization \eqref{eq:scalar_polarization:nonii} can be written in the form 
\begin{align*}
	P(w) 
	&= \epsilon_0 (\calG \ast w) + \epsilon_0 h (\calN \ast w)^3,
\end{align*}
Next we define the projection $\projR$ onto the set $\regularK\coloneqq \set{k \in \Z\setminus\set{0} \colon \F_k[\calN] \neq 0}$ of ``regular'' frequency indices by
\begin{align*}
	\projR[v] = \F^{-1}\bigl[\bbone_{k \in \regularK} \F_k[v]\bigr],
\end{align*} 
as well as the projection onto the ``singular'' frequency indices $\singularK \coloneqq \Z \setminus \regularK$ by $\projS[v] \coloneqq \F^{-1}\bigl[\bbone_{k \in \singularK} \F_k[v]\bigr] = (I - \projR)[v]$. We apply both to \eqref{eq:profile_problem:slab} for time-periodic $w$ to obtain the two problems
\begin{align*}
	&\left( - \partial_x^2 + \partial_t^2 (1 - \tfrac{1}{c^2} + \calG\ast) \right)\projR[w]
	+ h \partial_t^2 \projR[N(w)] = 0,
	\\ &\left( - \partial_x^2 + \partial_t^2 (1 - \tfrac{1}{c^2} + \calG\ast) \right)\projS[w]
	+ h \partial_t^2 \projS[N(w)] = 0,
\end{align*}
where the cubic nonlinearity $N(w)$ is given by
\begin{align*}
	N(w) = \calN \ast w^3
	\qquad\text{or}\qquad
	N(w) = (\calN \ast w)^3
\end{align*}
corresponding to \eqref{eq:material:noni} and \eqref{eq:material:nonii}, respectively.

Let us first consider the nonlinearity $N(w) = \calN \ast w^3$. Using $\projS (\calN\ast) = 0$ and assuming that the linear operator $\left( - \partial_x^2 + \partial_t^2 (1 - \tfrac{1}{c^2} + \calG\ast) \right)$ is injective, we can further simplify this to 
\begin{align*}
	\left( - \partial_x^2 + \partial_t^2 (1 - \tfrac{1}{c^2} + \calG\ast) \right) w
	+ h \partial_t^2 (\calN \ast w^3) = 0, 
	\qquad 
	\projS[w] = 0.
\end{align*}
Observe that the convolution operator $\calN\ast$ is formally invertible on $\ker \projS$ since $\F_k[\calN] \neq 0$ for $k \in \regularK$. Therefore we may rephrase this problem as
\begin{align}\label{eq:effective_problem}
	(- \partial_t^2 \calN\ast)^{-1}\left( - \partial_x^2 + \partial_t^2 (1 - \tfrac{1}{c^2} + \calG\ast) \right) u
	- h \projR[u^3] = 0, 
	\qquad 
	\projS[u] = 0
\end{align}
with $u \coloneqq w$.

For the second nonlinearity $N(w) =  (\calN \ast w)^3$ we set $u \coloneqq \calN \ast w$ and therefore get
\begin{align}\label{eq:w_reconstruction:ii}
	\begin{split}
		&(-\partial_t^2 \calN\ast)^{-1} \left( - \partial_x^2 + \partial_t^2 (1 - \tfrac{1}{c^2} + \calG\ast) \right)u
		- h \projR[u^3] = 0,
		\qquad \projS[u] = 0,
		\\ &\left( - \partial_x^2 + \partial_t^2 (1 - \tfrac{1}{c^2} + \calG\ast) \right)\projS[w]
		+ h \partial_t^2 \projS[u^3] = 0.	
	\end{split}
\end{align}
Note that the first of the two equations above is \eqref{eq:effective_problem}. Hence, also for the second nonlinearity, it is sufficient to solve \eqref{eq:effective_problem} for $u$ and then use the second equation to determine the missing values of $\calF_k[w]$ for $k\in\singularK$.

We first focus our attention on investigating the ``effective problem'' \eqref{eq:effective_problem}, which using
\begin{align*}
	V(x) \coloneqq \tfrac{1}{c^2} - 1 - \calG\ast
\end{align*}
we can write as
\begin{align}\label{eq:effective_problem:small}
	(- \partial_t^2 \calN\ast)^{-1}\left( - \partial_x^2 - V(x)\partial_t^2 \right) u - h \projR[u^3] = 0,
	\qquad \projS[u] = 0.
\end{align}
Since $\calG, \calN$ are even in time, the differential operator above is symmetric, and therefore solutions of \eqref{eq:effective_problem:small} formally are critical points of the functional
\begin{align*}
	\fc(u) = \int_{\R \times \T} \left(\tfrac12 u \cdot (- \partial_t^2 \calN\ast)^{-1}\left( - \partial_x^2 - V(x)\partial_t^2 \right) u - \tfrac14 h(x) u^4 \right) \der (x, t),
	\qquad \projS[u] = 0.
\end{align*} 

Next we properly define the domain $\fcd$ of the functional $\fc$ sketched above, and we investigate embeddings $\fcd \embeds L^p$.

\begin{definition}\label{def:slab:functional_domain}
	We define the space
	\begin{align*}
		\fcd \coloneqq \set{u \in L^2(\R \times \T) \colon \hat u_k \equiv 0 \text{ for } k \in \Z \setminus \regularK, \norm{u}_\fcd^2 \coloneqq \iip{u}{u}_{\fcd} < \infty}
	\end{align*}
	where
	\begin{align*}
		\iip{u}{v}_\fcd = \sum_{k \in \regularK} \frac{1}{\omega^2 k^2 \F_k[\calN]} 
		\int_\R \left(\hat u_k' \overline{\hat v_k'} + \omega^2 k^2 \hat u_k \overline{\hat v_k} \right) \der x
	\end{align*}
	Note that $V_k(x) \coloneqq \tfrac{1}{c^2} - 1 - \F_k[\calG(x)]$ is bounded and strictly positive by assumption, so that
	\begin{align*}
		\ip{u}{v}_\fcd \coloneqq \sum_{k \in \regularK} \frac{1}{\omega^2 k^2 \F_k[\calN]} \int_\R \bigl(\hat u_k' \overline{\hat v_k'} + \omega^2 k^2 V_k(x) \hat u_k \overline{\hat v_k}\bigr) \der x
	\end{align*}
	defines an equivalent inner product on $\fcd$.
\end{definition}

Note that $\fcd$ is a Hilbert space since $\fcd \embeds L^2$. Assumption~\ref{ass:nonlin:decay} on the decay of the Fourier coefficients of $\calN$ ensures that $\fcd$ continuously embeds into $L^p(\R \times \T)$ for all $p \in [2, p^\star)$ where $p^\star > 4$, as we show below in \cref{lem:Lp_embedding}. This allows us to write 
	\begin{align*}
		\fc(u) = \tfrac12 \ip{u}{u}_\fcd - \tfrac14 \int_{\R \times \T} h(x) u^4 \der (x,t)
		\quad\text{for}\quad
		u \in \fcd,
	\end{align*}
and to define solutions $u \in \fcd$ of \eqref{eq:effective_problem} in the following way.

\begin{definition}[weak solution]\label{def:slab:weak_solution}
	A function $u \colon \R \times \T \to \R$ is called a \emph{weak solution} to \eqref{eq:effective_problem} if $u \in \fcd$ and 
	\begin{align*}
		\ip{u}{v}_\fcd - \int_{\R \times \T} h(x) u^3 v \der (x,t) = 0
	\end{align*}
	for all $v \in \fcd$. This is equivalent to $\fc'(u) = 0$.
\end{definition}

It is standard to verify the validity of the following density result for $\fcd$, which will prove very useful for some approximation arguments.
\begin{lemma}\label{lem:dense_subset}
	The set $\set{u \in C_c^\infty(\R \times \T) \colon \hat u_k \equiv 0 \text{ for almost all } k \in \Z} \cap \fcd$ is dense in $\fcd$.
\end{lemma}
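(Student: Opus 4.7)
The plan is a standard two-stage approximation: first truncate $u \in \fcd$ to finitely many time-Fourier modes, then approximate each surviving spatial coefficient by a $C_c^\infty(\R)$ function. Throughout, I maintain both the reality condition $\hat u_{-k} = \overline{\hat u_k}$ and the support constraint $\projS[u] = 0$, so the approximations automatically land in the stated subset.

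\textbf{Stage 1 (truncation in time).} Given $\eps > 0$ and $u \in \fcd$, set
\[
	u_N(x,t) \coloneqq \sum_{\substack{k \in \regularK \\ \abs{k} \leq N}} \hat u_k(x)\, e_k(t).
\]
Since $u$ is real-valued, the pairing $\hat u_{-k} = \overline{\hat u_k}$ is preserved by this truncation, so $u_N$ is real-valued. Clearly $u_N \in \fcd$ with $\projS[u_N] = 0$, and by \cref{def:slab:functional_domain},
\[
	\norm{u - u_N}_\fcd^2 = \sum_{\substack{k \in \regularK \\ \abs{k} > N}} \frac{1}{\omega^2 k^2 \F_k[\calN]} \int_\R \bigl(\abs{\hat u_k'}^2 + \omega^2 k^2 \abs{\hat u_k}^2\bigr) \der x
\]
is the tail of the absolutely convergent series defining $\norm{u}_\fcd^2$, hence smaller than $(\eps/2)^2$ for $N$ large enough. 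Positivity of $\F_k[\calN]$ on $\regularK$, which is ensured by assumption \ref{ass:nonlin:decay}, guarantees that each term in the sum is nonnegative.

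\textbf{Stage 2 (smoothing in space).} Fix such an $N$. For each $k \in \regularK$ with $\abs{k} \leq N$ the coefficient $\hat u_k$ lies in $H^1(\R;\C)$, because the $k$-th summand of $\norm{u}_\fcd^2$ is, up to a $k$-dependent positive constant, equivalent to $\norm{\hat u_k}_{H^1(\R)}^2$. By the standard density of $C_c^\infty(\R;\C)$ in $H^1(\R;\C)$, I can choose $\phi_k \in C_c^\infty(\R;\C)$ with $\norm{\hat u_k - \phi_k}_{H^1(\R)}$ as small as desired; setting $\phi_{-k} \coloneqq \overline{\phi_k}$ preserves reality. Define
\[
	\tilde u(x,t) \coloneqq \sum_{\substack{k \in \regularK \\ \abs{k} \leq N}} \phi_k(x)\, e_k(t).
\]
Then $\tilde u \in C_c^\infty(\R \times \T; \R)$, its time-Fourier series is a finite sum with modes only in $\regularK$, and hence $\tilde u$ belongs to the stated subset. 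Because $u_N - \tilde u$ is a finite trigonometric polynomial in $t$,
\[
	\norm{u_N - \tilde u}_\fcd^2 = \sum_{\substack{k \in \regularK \\ \abs{k} \leq N}} \frac{1}{\omega^2 k^2 \F_k[\calN]} \int_\R \bigl(\abs{\hat u_k' - \phi_k'}^2 + \omega^2 k^2 \abs{\hat u_k - \phi_k}^2\bigr) \der x,
\]
which can be driven below $(\eps/2)^2$ by choosing each $\phi_k$ sufficiently close to $\hat u_k$ in $H^1(\R)$. The triangle inequality then yields $\norm{u - \tilde u}_\fcd < \eps$.

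\textbf{Main obstacle.} There is no genuine obstacle here; the result is routine. The only points requiring care are the bookkeeping that preserves the reality condition $\hat u_{-k} = \overline{\hat u_k}$ and the support constraint $\hat u_k = 0$ for $k \in \singularK$, both of which are handled by reflecting $\phi_{-k} \coloneqq \overline{\phi_k}$ and by only introducing modes indexed in $\regularK$. The finiteness of the sum in Stage~2 is essential, since the equivalence constants between the weighted norm on $\fcd$ and the ordinary $H^1$-norm on each spatial coefficient depend on $k$; restricting to $\abs{k} \leq N$ makes this dependence irrelevant.
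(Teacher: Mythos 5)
Your proof is correct. The paper itself provides no argument for this lemma, merely labelling it standard, and your two-stage scheme — truncating the time-Fourier series to finitely many modes, then approximating each surviving spatial coefficient $\hat u_k \in H^1(\R)$ by a $C_c^\infty(\R)$ function — is precisely the routine argument the authors have in mind, with the bookkeeping (only modes in $\regularK$, reflection $\phi_{-k} = \overline{\phi_k}$ for reality, which is consistent since $\regularK$ is symmetric under $k\mapsto -k$ because $\calN$ is real and even) handled correctly.
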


\begin{lemma}\label{lem:Lp_embedding}
	For any $p \in [2, p^\star)$ with $p^\star \coloneqq \frac{4}{2 - \alpha}$ ($p^\star = \infty$ if $\alpha \geq 2$), the embedding $\fcd \embeds L^p(\R \times \T)$ is continuous and the embedding $\fcd \embeds L^p_\loc(\R \times \T)$ is compact.
\end{lemma}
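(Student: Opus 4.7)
My plan is to prove the continuous embedding $\fcd \embeds L^p(\R \times \T)$ by combining Hausdorff--Young in time, the one-dimensional Gagliardo--Nirenberg inequality in the spatial direction, and a Hölder summation over the Fourier index; the local compact embedding will then follow from a tail-truncation plus Rellich--Kondrachov for finitely many Fourier modes.

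For the continuous part I would expand $u \in \fcd$ as $u(x,t) = \sum_{k \in \regularK} \hat u_k(x) e_k(t)$. The upper bound $\F_k[\calG(\bfx)] < c^{-2} - 1$ in \ref{ass:upper:bound} makes $V_k > 0$ and hence $\ip{\cdot}{\cdot}_\fcd$ and $\iip{\cdot}{\cdot}_\fcd$ equivalent; the upper Fourier bound $\F_k[\calN] \lesssim \abs{k}^{-\alpha}$ in \ref{ass:nonlin:decay} then yields
\begin{align*}
    \norm{u}_\fcd^2 \gtrsim \sum_{k \in \regularK}\bigl(A_k^2 + B_k^2\bigr),
    \quad
    A_k \coloneqq \abs{k}^{(\alpha-2)/2} \norm{\hat u_k'}_{L^2(\R)},
    \quad
    B_k \coloneqq \abs{k}^{\alpha/2} \norm{\hat u_k}_{L^2(\R)}.
\end{align*}
Hausdorff--Young on $\T$ gives $\norm{u(x,\cdot)}_{L^p(\T)} \lesssim \bigl(\sum_k \abs{\hat u_k(x)}^{p'}\bigr)^{1/p'}$, and Minkowski's integral inequality (valid since $p/p' \geq 1$) upgrades this to
\begin{align*}
    \norm{u}_{L^p(\R \times \T)} \lesssim \Bigl(\sum_{k \in \regularK} \norm{\hat u_k}_{L^p(\R)}^{p'}\Bigr)^{1/p'}.
\end{align*}
The 1D Gagliardo--Nirenberg inequality with $\theta = \tfrac12 - \tfrac1p$ gives $\norm{\hat u_k}_{L^p(\R)} \lesssim \norm{\hat u_k'}_{L^2}^\theta \norm{\hat u_k}_{L^2}^{1-\theta}$, which in the new variables reads $\norm{\hat u_k}_{L^p(\R)} \lesssim \abs{k}^{\theta - \alpha/2} A_k^\theta B_k^{1-\theta}$. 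Applying Hölder with exponents $r' = 2/p'$ and $r = 2/(2 - p')$, together with the weighted AM--GM estimate $A_k^{2\theta} B_k^{2(1-\theta)} \leq A_k^2 + B_k^2$, reduces everything to checking finiteness of the auxiliary series
\begin{align*}
    \sum_{k \in \regularK} \abs{k}^{(\theta - \alpha/2)p'r} = \sum_{k \in \regularK} \abs{k}^{1 - p\alpha/(p-2)}.
\end{align*}
A direct computation shows convergence iff $1 - p\alpha/(p-2) < -1$, i.e.\ $p < 4/(2-\alpha) = p^\star$, with the convention $p^\star = \infty$ when $\alpha \geq 2$. This proves the continuous embedding for all $p \in [2, p^\star)$, the case $p = 2$ being already immediate from $\alpha > 1$ and Parseval.

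For the compact embedding $\fcd \embeds L^p_\loc(\R \times \T)$, I fix a compact $K \subset \R \times \T$ and a bounded sequence $u_n$ in $\fcd$. For $N \in \N$ I split $u_n = u_n^{\leq N} + u_n^{>N}$ according to $\abs{k} \leq N$ or $\abs{k} > N$. The very same chain used in the continuous embedding, applied only to $u_n^{>N}$, yields $\norm{u_n^{>N}}_{L^p(\R \times \T)} \leq \eta(N) \norm{u_n}_\fcd$ with $\eta(N) \to 0$ as $N \to \infty$, since only the tail of the convergent series above remains. For each fixed $N$ the truncation $u \mapsto u^{\leq N}$ maps $\fcd$ boundedly into $H^1(K)$ (finitely many smooth time harmonics, each with $H^1$ control in $x$), so by Rellich--Kondrachov we can extract a subsequence with $u_n^{\leq N} \to u^{\leq N}$ in $L^p(K)$. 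A standard diagonal argument combined with the uniform tail estimate then produces a single subsequence converging in $L^p(K)$.

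The main obstacle is the bookkeeping in the second step: the negative powers of $\abs{k}$ produced by $\F_k[\calN]$ have to be distributed between the Gagliardo--Nirenberg factor and the Hölder factor so that the resulting summation exponent lands exactly at the threshold $p^\star = 4/(2-\alpha)$. Once this calibration is correct, both the integrability condition $\alpha > \alpha^\star = 1$ (ensuring $p^\star > 4$, needed later for the cubic nonlinearity) and the compactness argument follow by routine manipulations.
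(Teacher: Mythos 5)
Your proof is correct, but it takes a genuinely different route from the paper's. The paper applies Hausdorff--Young to the \emph{full space-time} Fourier transform $\F_{\xi,k}[u]$ and then uses a single Hölder inequality to split $\F_{\xi,k}[u]$ into the weight $\sqrt{\omega^2 k^2 \F_k[\calN]/(\xi^2 + \omega^2 k^2)}$ (whose $L^r$-norm is computed explicitly by rescaling the $\xi$-integral and summing in $k$) and the $\fcd$-normalized transform. You instead stay in physical $x$-space: Hausdorff--Young only in time, Minkowski to swap the $k$-sum past the $x$-integral, then 1D Gagliardo--Nirenberg $\norm{\hat u_k}_{L^p(\R)} \lesssim \norm{\hat u_k'}_{L^2}^\theta \norm{\hat u_k}_{L^2}^{1-\theta}$ with $\theta = \tfrac12 - \tfrac1p$, and finally Hölder plus AM--GM on the weighted quantities $A_k, B_k$. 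The bookkeeping is correct: your exponent $1 - p\alpha/(p-2)$ agrees with the paper's $1 - \alpha r/2$ after substituting $r = 2p/(p-2)$, so both give the threshold $p^\star = 4/(2-\alpha)$. The paper's version is more compact (one Hölder application, no interpolation inequality), and it generalizes cleanly to the radial/cylindrical setting in \cref{lem:cyl:Lp_embedding} where $\F_\xi$ is replaced by the Hankel/Bessel transform; your version is more elementary (avoids the explicit symbol norm computation) and exposes how the frequency weight is traded against the spatial $H^1$ interpolation. Your compactness argument via tail truncation $u \mapsto u^{>N}$, Rellich--Kondrachov on the low modes, and a diagonal extraction is the same mechanism as the paper's $P_K \to I$ in operator norm, just phrased in terms of sequences rather than operators.
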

\begin{proof}
	Let us first show continuity. For this, we calculate
	\begin{align*}
		\norm{u}_p
		\lesssim \norm{\F_{\xi, k}[u]}_{p'}
		\lesssim \norm{\sqrt{\frac{\omega^2 k^2 \F_k[\calN]}{\xi^2 + \omega^2 k^2}}}_r 
		\cdot \norm{\sqrt{\frac{\xi^2 + \omega^2 k^2}{\omega^2 k^2 \F_k[\calN]}} \F_{\xi, k}[u]}_2
		\lesssim \norm{u}_\fcd
	\end{align*}
	where $\frac{1}{r} = \frac{1}{2} - \frac{1}{p} < \frac{\alpha}{4}$ and where by \ref{ass:nonlin:decay} the first term has been estimated as follows 
	\begin{align*}
		\MoveEqLeft\norm{\sqrt{\frac{\omega^2 k^2 \F_k[\calN]}{\xi^2 + \omega^2 k^2}}}_r^r
		= \sum_{k \in \regularK} \int_\R \left( \frac{\omega^2 k^2 \F_k[\calN]}{\xi^2 + \omega^2 k^2} \right)^{\nicefrac r2} \der \xi
		\\ &= \int_\R \left( \frac{1}{\xi^2 + 1} \right)^{\nicefrac r2} \der \xi \cdot \sum_{k \in \regularK} \abs{\omega k} \F_k[\calN]^{\nicefrac{r}{2}}
		\lesssim \sum_{k \in \regularK} \abs{k}^{1 - \frac{\alpha r}{2}} < \infty.
	\end{align*}
    In order to show compactness, define for $K \in \Nodd$ the projection onto ``small'' frequencies $P_K \colon \fcd \to \fcd$ by $P_K[u] = \F_t^{-1}\left[\bbone_{\abs{k} \leq K} \F_k[u]\right]$. Then on $P_K \fcd$ the norm $\norm{u}_\fcd$ is equivalent to
	\begin{align*}
		\nnorm{u} = \sum_{\substack{k \in \regularK \\ \abs{k} \leq K}} \norm{\hat u_k}_{\fcd^1(\R)}.
	\end{align*}
	Since the embedding $\fcd^1(\R) \to L^p_\loc(\R)$ is compact for any $p \in [2, \infty]$ and the sum above is finite, it follows that $P_K \colon \fcd \to L^p_\loc(\R \times \T)$ is compact. Next, the calculations above show for $u \in \fcd$ that 
	\begin{align*}
		\norm{u - P_K[u]}_p \leq C \sum_{\substack{k \in \regularK \\ \abs{k} > K}} \abs{k}^{1 - \frac{\alpha r}{2}} \norm{u}_\fcd
	\end{align*}
	for some $C > 0$ independent of $K$, so that $P_K \to I$ in $\calB(\fcd; L^p(\R \times \T))$ as $K\to\infty$. Thus $\fcd$ embeds compactly into $L^p_\loc(\R \times \T)$.
\end{proof}

\begin{definition}
	For $s \in \R$ we define the fractional time-derivative $\fracDT{s}$ as the Fourier multiplier $\fracDT{s} v(t) = \F_t^{-1}[\abs{\omega k}^s \hat v_k]$.
\end{definition}

\begin{corollary}\label{cor:Lp_embedding:derivatives}
	As in the proof of \cref{lem:Lp_embedding} we see that for $p \in [2, p^\star)$ and $\eps > 0$ sufficiently small (depending on $\alpha, p$), the map $\fracDT{\eps} \colon \fcd \to L^p(\R \times \T)$ is bounded and $\fracDT{\eps} \colon \fcd \to L^p_\loc(\R \times \T)$ is compact.
\end{corollary}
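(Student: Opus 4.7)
The plan is to mimic the proof of \cref{lem:Lp_embedding} essentially verbatim, with the only modification being an additional factor of $\abs{\omega k}^\eps$ inserted on the Fourier side to account for $\fracDT{\eps}$. Since $\fracDT{\eps}$ is a Fourier multiplier with symbol $\abs{\omega k}^\eps$, Hausdorff–Young gives $\norm{\fracDT{\eps} u}_p \lesssim \norm{\abs{\omega k}^\eps \F_{\xi,k}[u]}_{p'}$, and I can then factor
\[
	\abs{\omega k}^\eps \F_{\xi,k}[u]
	= \sqrt{\frac{\abs{\omega k}^{2\eps} \omega^2 k^2 \F_k[\calN]}{\xi^2 + \omega^2 k^2}}
	\cdot \sqrt{\frac{\xi^2 + \omega^2 k^2}{\omega^2 k^2 \F_k[\calN]}} \F_{\xi,k}[u]
\]
and apply Hölder with the same split $\frac{1}{p'} = \frac{1}{r} + \frac{1}{2}$, $\frac{1}{r} = \frac{1}{2} - \frac{1}{p}$ as before. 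The second factor still has $L^2$-norm controlled by $\norm{u}_\fcd$; only the first factor's $L^r$-norm computation changes.

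The key estimate is then, after the substitution $\xi = \omega k \tilde\xi$,
\[
	\sum_{k \in \regularK} \int_\R \left( \frac{\abs{\omega k}^{2\eps} \omega^2 k^2 \F_k[\calN]}{\xi^2 + \omega^2 k^2} \right)^{\nicefrac{r}{2}} \der \xi
	\lesssim \sum_{k \in \regularK} \abs{k}^{1 + r\eps - \frac{\alpha r}{2}},
\]
so finiteness demands $\eps < \frac{\alpha}{2} - \frac{2}{r}$. Since $p \in [2, p^\star)$ forces $\frac{1}{r} = \frac{1}{2} - \frac{1}{p} < \frac{\alpha}{4}$, and hence $\frac{2}{r} < \frac{\alpha}{2}$ strictly, a positive $\eps$ with the required property exists. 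This yields the boundedness $\fracDT{\eps} \colon \fcd \to L^p(\R \times \T)$.

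For compactness into $L^p_\loc$, I would reuse the cutoff $P_K$ from the proof of \cref{lem:Lp_embedding}. On the finite-dimensional (in the $k$-variable) range $P_K \fcd$, $\fracDT{\eps}$ acts as a bounded operator with respect to the equivalent norm $\nnorm{\cdot}$, so $\fracDT{\eps} P_K \colon \fcd \to L^p_\loc(\R \times \T)$ inherits compactness from the compact embedding $H^1(\R) \embeds L^p_\loc(\R)$ for each fixed mode. The tail estimate above with the sum restricted to $\abs{k} > K$ shows $\norm{\fracDT{\eps}(I - P_K)}_{\calB(\fcd, L^p)} \to 0$ as $K \to \infty$, so $\fracDT{\eps}$ is the operator-norm limit of compact operators into $L^p_\loc$, hence compact.

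The only real point requiring care is verifying the strict inequality $\frac{2}{r} < \frac{\alpha}{2}$ from $p < p^\star$, and then choosing $\eps$ strictly below the resulting gap; no new analytic ingredient beyond what \cref{lem:Lp_embedding} already used is needed, which is why the corollary can be stated as an immediate consequence.
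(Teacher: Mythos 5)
Your proposal is correct and is precisely the argument the paper intends: the corollary is stated as an immediate consequence of the proof of \cref{lem:Lp_embedding}, and you carry out the indicated modification — inserting the factor $\abs{\omega k}^\eps$ on the Fourier side, tracking the resulting extra $r\eps$ in the summability exponent to get the constraint $\eps < \tfrac{\alpha}{2} - \tfrac{2}{r}$, and reusing the cutoff $P_K$ plus the uniform tail bound for compactness into $L^p_\loc$. No further comment is needed.
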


Let us recall the notion of a Palais-Smale sequence.
\begin{definition}
	A sequence $(u_n)$ in $\fcd$ is called a Palais-Smale sequence for $\fc$ if $\fc'(u_n) \to 0$ in $\fcd'$ and $\fc(u_n)$ converges in $\R$ as $n\to\infty$.  If $\lim_{n\to\infty} \fc(u_n) = c$, we call $(u_n)$ a Palais-Smale sequence at level $c$.
\end{definition}

In the following lemma we show a variant of the concentration-compactness principle that will be a useful tool for extracting a nonzero limit from Palais-Smale sequences. 

\begin{lemma}\label{lem:concentration-compactness}
	Let $(u_n)$ be a bounded sequence in $\fcd$, $r > 0$ and $\tilde p \in [2, p^\star)$ such that
	\begin{align*}
		\sup_{x \in \R} \norm{u_n}_{L^{\tilde p}([x - r, x + r] \times \T)} \to 0
	\end{align*}
	as $n \to \infty$. Then $u_n \to 0$ in $L^p(\R \times \T)$ for all $p \in (2, p^\star)$.
\end{lemma}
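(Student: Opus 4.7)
The plan is to adapt the classical Lions concentration-compactness argument to our functional setting.

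\textbf{Reduction step.} It suffices to establish $u_n \to 0$ in $L^{p_0}(\R \times \T)$ for a single exponent $p_0 \in (\tilde p, p^\star)$, taken sufficiently close to $\tilde p$. From this, $L^p$-convergence for arbitrary $p \in (2, p^\star)$ follows by H\"older interpolation: for $p \in (2, p_0]$ we bound $\norm{u_n}_{L^p} \leq \norm{u_n}_{L^2}^\lambda \norm{u_n}_{L^{p_0}}^{1-\lambda}$ and use that $(u_n)$ is bounded in $L^2$ via $\fcd \embeds L^2$; for $p \in (p_0, p^\star)$ we pick any $p_1 \in (p, p^\star)$ and bound $\norm{u_n}_{L^p} \leq \norm{u_n}_{L^{p_0}}^\mu \norm{u_n}_{L^{p_1}}^{1-\mu}$, using $\fcd \embeds L^{p_1}$ from \cref{lem:Lp_embedding}.

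\textbf{Lions-type estimate.} Choose $p_0 \in (\tilde p, p^\star)$ with $p_0 < \tilde p + 2$ and such that $q \coloneqq 2 \tilde p / (\tilde p - p_0 + 2) \in (\tilde p, p^\star)$; both conditions are fulfilled for $p_0$ sufficiently close to $\tilde p$ since $\tilde p < p^\star$. The Lebesgue interpolation exponent $\theta \coloneqq (p_0 - 2)/p_0 \in (0,1)$ then satisfies $\tfrac{1}{p_0} = \tfrac{\theta}{\tilde p} + \tfrac{1-\theta}{q}$ and $(1-\theta) p_0 = 2$. Partition $\R$ into the intervals $I_k \coloneqq [2kr, 2(k+1)r]$, $k \in \Z$. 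On each strip $I_k \times \T$, H\"older interpolation gives
\begin{align*}
    \norm{u}_{L^{p_0}(I_k \times \T)} \leq \norm{u}_{L^{\tilde p}(I_k \times \T)}^\theta \norm{u}_{L^q(I_k \times \T)}^{1-\theta}.
\end{align*}
Combined with a local Sobolev bound $\norm{u}_{L^q(I_k \times \T)} \leq C \norm{u}_{\fcd(I_k')}$ (where $I_k' \supset I_k$ is a slightly enlarged interval and $\norm{\cdot}_{\fcd(I_k')}$ denotes the $\fcd$-norm with the $x$-integration restricted to $I_k'$), and then raising to the $p_0$-th power, one obtains
\begin{align*}
    \norm{u}_{L^{p_0}(I_k \times \T)}^{p_0} \leq C \norm{u}_{L^{\tilde p}(I_k \times \T)}^{p_0 - 2} \norm{u}_{\fcd(I_k')}^2.
\end{align*}
Summing over $k$ and exploiting the bounded overlap of $\set{I_k'}_{k \in \Z}$ so that $\sum_k \norm{u}_{\fcd(I_k')}^2 \leq C \norm{u}_\fcd^2$, we arrive at
\begin{align*}
    \norm{u_n}_{L^{p_0}(\R \times \T)}^{p_0} \leq C \left( \sup_{k \in \Z} \norm{u_n}_{L^{\tilde p}(I_k \times \T)} \right)^{p_0 - 2} \norm{u_n}_\fcd^2,
\end{align*}
which tends to $0$ as $n \to \infty$ by the hypothesis and the $\fcd$-boundedness of $(u_n)$.

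\textbf{Main obstacle.} The principal technical point is the local Sobolev bound $\norm{u}_{L^q(I_k \times \T)} \leq C \norm{u}_{\fcd(I_k')}$. This can be derived via a cutoff argument: multiply $u$ by a smooth spatial cutoff $\chi$ supported in $I_k'$ with $\chi \equiv 1$ on $I_k$, apply the global embedding $\fcd \embeds L^q$ from \cref{lem:Lp_embedding} to $\chi u$, and then control $\norm{\chi u}_\fcd$ by a localized quantity. The delicate ingredient is the commutator term $\chi' \hat u_j$ appearing in $\norm{\chi u}_\fcd$ upon applying the product rule; the weight $(\omega j)^{-2} \F_j[\calN]^{-1}$ in the $\fcd$-inner product, in tandem with the lower bound $\F_j[\calN] \gtrsim \abs{j}^{-\beta}$ from \ref{ass:nonlin:decay}, permits this contribution to be absorbed into $\norm{u}_{\fcd(I_k')}^2$ uniformly in the time-Fourier mode $j$.
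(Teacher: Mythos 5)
Your proof follows essentially the same strategy as the paper's: partition $\R$ into overlapping strips, interpolate locally between the $L^{\tilde p}$-norm (which tends to zero uniformly) and a higher $L^q$-norm controlled via the $\fcd \embeds L^q$ embedding applied to a cut-off function, then sum using bounded overlap. The paper first reduces to $\tilde p = 2$ (permissible by H\"older on the bounded strips $[x-r,x+r]\times\T$), which simplifies the interpolation arithmetic to $q = 4/(4-p)$, whereas you keep $\tilde p$ general and instead perform the reduction in $p$; both variants are fine.

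One correction on what you call the ``main obstacle'': the lower bound $\F_j[\calN] \gtrsim \abs{j}^{-\beta}$ from \ref{ass:nonlin:decay} is not needed to absorb the commutator term $\chi'\hat u_j$. That term enters $\norm{\chi u}_\fcd^2$ with the weight $(\omega^2 j^2 \F_j[\calN])^{-1}$, and since $j \in \regularK$ forces $\abs{j}\geq 1$, this is already bounded by $\omega^{-2}\F_j[\calN]^{-1}$, which is (up to the constant $\omega^{-2}\norm{\chi'}_\infty^2$) precisely the weight sitting on the zero-order term $\omega^2 j^2\abs{\hat u_j}^2$ of the localized $\fcd$-norm. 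The absorption is therefore automatic and uniform in $j$ without invoking \ref{ass:nonlin:decay}; the paper's final display makes this explicit with a constant $C$ depending only on $\norm{\phi_m'}_\infty$ and $\omega$.
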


\begin{proof}
	By Hölder's inequality and \cref{lem:Lp_embedding} it suffices to show the result for $\tilde p = 2$ and one $p \in (2, p^\star)$, which we shall choose so close to $2$ that $2 < q \coloneqq \frac{4}{4 - p} < p^\star$. Let $\phi_m \colon \R \to [0,1]$ be a smooth partition of unity with $\supp \phi_m \subseteq [(m-1)r, (m+1)r]$, $\norm{\phi_m'}_\infty \leq \frac{2}{r}$. Using that at any point of $\R$ at most $2$ of the $\phi_m$ are nonzero, we calculate
	\begin{align*}
		\norm{u_n}_p^p 
		&= \int_{\R \times \T} \abs{\sum_{m \in \Z} \phi_m u_n}^p \der (x, t)
		\\ &\leq 2^{p-1} \int_{\R \times \T} \sum_{m \in \Z} \abs{\phi_m u_n}^p \der (x, t)
		\\ &= 2^{p-1} \sum_{m \in \Z} \norm{\phi_m u_n}_p^p
		\\ &\leq 2^{p-1} \sum_{m \in \Z} \norm{\phi_m u_n}_{q}^2 \norm{\phi_m u_n}_{2}^{p-2}
		\\ &\lesssim \sup_{x \in \R} \norm{u_n}_{L^2([x-r, x+r] \times \T)}^{p-2} \sum_{m \in \Z} \norm{\phi_m u_n}_\fcd^2.
	\end{align*}
	Moreover, since
	\begin{align*}
		\norm{\phi_m u_n}_\fcd^2
		&= \sum_{k \in \regularK} \frac{1}{\omega^2 k^2 \F_k[\calN]} 
		\int_\R \left(\abs{\phi_m' \hat u_k + \phi_m \hat u_k'}^2 + \omega^2 k^2 \abs{\phi_m \hat u_k}^2 \right) \der x
		\\ &\leq C
		\sum_{k \in \regularK} \frac{1}{\omega^2 k^2 \F_k[\calN]} 
		\int_{(m-1)r}^{(m+1)r} \left(\abs{\hat u_k'}^2 + \omega^2 k^2 \abs{\hat u_k}^2 \right) \der x,
	\end{align*}
	it follows that $\sum_{m \in \Z} \norm{\phi_m u_n}_\fcd^2 \leq 2 C \norm{u}_\fcd^2$. Thus, from the assumptions we obtain $\norm{u_n}_p \to 0$ as $n \to \infty$.
\end{proof}


\section{Existence of ground states}
\label{sec:ground_state_existance}

In the following, let $\fc$ be given by \cref{def:slab:weak_solution}. We call the energy level 
\begin{align*}
	c_\gs \coloneqq \inf_{\substack{u \in \fcd \setminus \set0 \\ \fc'(u) = 0}} \fc(u)
\end{align*}
the \emph{ground state energy level}, and any solution $u \in \fcd \setminus \set{0}$ of $\fc'(u) = 0$ with $\fc(u) = c_\gs$ a \emph{ground state} of $\fc$. Note that $c_\gs=+\infty$ if there are no nonzero critical points of $\fc$. Next we present the main result of this section. The rest of this section is dedicated to its proof.

\begin{theorem}\label{thm:ground_state}
	There exists a ground state of $\fc$.
\end{theorem}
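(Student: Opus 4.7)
The plan is to obtain a ground state as a mountain-pass critical point of $\fc$, characterized also as a minimizer of $\fc$ on its Nehari manifold. First, I verify mountain-pass geometry. The embedding $\fcd\embeds L^4(\R\times\T)$ from \cref{lem:Lp_embedding} yields $\left|\int_{\R\times\T}h u^4\der(x,t)\right|\lesssim\norm{h}_\infty\norm{u}_\fcd^4$, so $\fc(u)\ge\tfrac12\norm{u}_\fcd^2-C\norm{u}_\fcd^4$ and $0$ is a strict local minimum. Because $h\not\leq 0$ (respectively $h^\per\not\leq 0$ in case (A6b)), one can pick $\varphi\in\fcd$ from the dense subset of \cref{lem:dense_subset}, supported where $h$ is strictly positive, with $\int h\varphi^4>0$; then $\fc(t\varphi)\to-\infty$ as $t\to\infty$. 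For any $u\in\fcd$ with $\int hu^4 > 0$, the map $t\mapsto\fc(tu)$ has a unique positive maximizer $t_u$ with $t_uu$ in the Nehari set $\calM := \set{v\neq 0\colon \fc'(v)v=0}$. Standard mountain-pass/Nehari arguments give
\[
c_\mopa := \inf_{\gamma\in\Gamma}\max_{s\in[0,1]}\fc(\gamma(s)) = \inf_\calM\fc = \inf_\calM\tfrac14\norm{u}_\fcd^2,
\]
where $\Gamma$ denotes the set of continuous paths in $\fcd$ from $0$ to a point of negative energy. Since any nontrivial critical point of $\fc$ belongs to $\calM$, we have $c_\mopa\le c_\gs$, so it suffices to produce a nontrivial critical point at level $c_\mopa$.

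The mountain-pass theorem furnishes a Palais-Smale sequence $(u_n)\subset\fcd$ at level $c_\mopa$. The Nehari-type identity $\fc(u_n) - \tfrac14\fc'(u_n)u_n = \tfrac14\norm{u_n}_\fcd^2$ combined with $\fc'(u_n)\to 0$ bounds $(u_n)$ in $\fcd$, so after extracting a subsequence $u_n\wto u_\infty$ weakly in $\fcd$ and $u_n\to u_\infty$ in $L^p_\loc(\R\times\T)$ for $p\in[2,p^\star)$ by \cref{lem:Lp_embedding}. Testing $\fc'(u_n)\to 0$ against any $v$ from the dense set of \cref{lem:dense_subset} and passing to the limit via the $L^4_\loc$ convergence yields $\fc'(u_\infty)=0$. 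If $u_\infty\neq 0$, then $u_\infty\in\calM$ and the weak lower semicontinuity of $\norm{\cdot}_\fcd^2$ gives
\[
\fc(u_\infty) = \tfrac14\norm{u_\infty}_\fcd^2 \le \liminf_{n\to\infty}\tfrac14\norm{u_n}_\fcd^2 = c_\mopa \le c_\gs,
\]
while $c_\mopa=\inf_\calM\fc\le\fc(u_\infty)$ forces equality, identifying $u_\infty$ as a ground state.

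The main obstacle is to exclude $u_\infty = 0$, and this is precisely where assumption (A6) enters. If $\sup_{x\in\R}\norm{u_n}_{L^2([x-r,x+r]\times\T)}\to 0$ for some $r>0$, then \cref{lem:concentration-compactness} gives $u_n\to 0$ in $L^4$, so the Nehari identity forces $\norm{u_n}_\fcd^2=\fc'(u_n)u_n+\int h u_n^4\to 0$, contradicting $c_\mopa>0$. Otherwise there exist $\delta,r>0$ and points $x_n\in\R$ with $\norm{u_n}_{L^2([x_n-r,x_n+r]\times\T)}\ge\delta$. Under (A6a), if $|x_n|\to\infty$ then $h(\cdot+x_n)\to 0$, and the nontrivial weak limit of the translates $u_n(\cdot+x_n,\cdot)$ would be a nonzero critical point of the purely quadratic functional $u\mapsto\tfrac12\norm{u}_\fcd^2$, which is impossible; hence $(x_n)$ is bounded and $u_\infty\neq 0$. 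Under (A6b), one instead translates by an integer multiple of the common period of $\calG^\per,h^\per$ so that the shifted sequence converges weakly to a nontrivial critical point of the purely periodic functional $\fc^\per$ (built from $\calG^\per,h^\per$) at some level $\le c_\mopa$, i.e.\ $c_\gs^\per\le c_\mopa$. The sign and positive-definiteness assumptions on $\calG^\loc$ and $h^\loc$ will be used to establish the strict energy inequality $c_\mopa < c_\mopa^\per$ (announced as the forthcoming comparison lemma), excluding this alternative and forcing $u_\infty\neq 0$. This strict energy comparison between the perturbed and the purely periodic problem is the delicate ingredient and the principal difficulty of the proof.
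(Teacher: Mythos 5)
Your overall strategy — mountain-pass geometry, boundedness of Palais--Smale sequences via the Nehari identity $\fc(u_n)-\tfrac14\fc'(u_n)[u_n]=\tfrac14\ip{u_n}{u_n}_\fcd$, weak limit is a critical point, exclude vanishing via concentration--compactness, then handle concentration at escaping points by comparison with a limiting problem — is in the same spirit as the paper's proof. However, there are two genuine gaps.

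\textbf{The (A6a) argument breaks.} You propose that if the concentration points $\abs{x_n}\to\infty$, then the translates $u_n(\cdot+x_n,\cdot)$ converge weakly to a nonzero critical point of the purely quadratic functional $u\mapsto\tfrac12\norm{u}_\fcd^2$, a contradiction. But the $\fcd$-inner product contains $V_k(x)=\tfrac1{c^2}-1-\F_k[\calG(x)]$, and assumption (A6a) imposes \emph{no} asymptotic structure on $\calG$ as $\abs{x}\to\infty$: only $h(x)\to 0$ is required. So the translated quadratic forms do not converge to anything, and the limiting functional you appeal to is undefined. The paper's proof (\cref{lem:ps_convergence_part12}, Part 1) avoids translation entirely: since $h(x)\to 0$ and $u_n\to u$ in $L^4_\loc$, the nonlinear term $h\,u_n^3$ converges strongly to $h\,u^3$ in $L^{\nicefrac43}(\R\times\T)$, and plugging this into $\fc'(u_n)\to 0$ gives $\ip{u_n-u}{v}_\fcd=\landauo(\norm{v}_\fcd)$, hence $u_n\to u$ strongly in $\fcd$ and $\fc(u)=c>0$, so $u\neq0$. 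This is a compactness-of-the-nonlinearity argument, not a translation argument, and it is what makes (A6a) the ``easy'' case.

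\textbf{The announced strict inequality $c_\mopa<c_\gs^\per$ is not unconditionally true, and a case analysis is unavoidable.} Assumption (A6b) permits $\calG^\loc\equiv0$ and $h^\loc\equiv0$, in which case $\fc=\fc^\per$ and $c_\mopa=c_\gs^\per$; your contradiction then evaporates. More subtly, even when $(\calG^\loc,h^\loc)\neq0$, the paper observes that a unique continuation property for the nonlocal operator is not known, so one cannot rule out that a ground state $u^\per$ of $\fc^\per$ has zero sets of positive measure on which $\calG^\loc\ast u^\per$ and $h^\loc (u^\per)^3$ both vanish — making $u^\per$ simultaneously a critical point of $\fc$. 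In that case the argument underpinning $c_\mopa<c_\gs^\per$ (namely, $\fc(su^\per)<\fc^\per(su^\per)$ for $s\neq0$) fails. This is why the paper's \cref{lem:compare:energy} is stated \emph{conditionally} on ``no ground state of $\fc^\per$ is a critical point of $\fc$'', and why the proof of \cref{thm:ground_state} under (A6b) needs a two-case split: either some ground state $u^\per$ of $\fc^\per$ is a critical point of $\fc$ (then either $c_\gs<c_\gs^\per$ or $u^\per$ is already the desired ground state), or none is (then \cref{lem:compare:energy} applies and gives a Palais--Smale sequence at a level strictly below $c_\gs^\per$). Your sketch announces an unconditional inequality and omits this case distinction, which is precisely the ``delicate ingredient'' you flag but do not resolve.

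Finally, a smaller point: you invoke $c_\mopa=\inf_\calM\fc$, which requires some care here because $h$ changes sign — not every ray $t\mapsto tu$ meets the Nehari set. This is standard to fix but not automatic; the paper sidesteps the Nehari manifold entirely, proving instead $c_\gs>0$ directly and running a two-stage argument (first produce \emph{any} nonzero critical point via the mountain pass, then apply the convergence lemma to a minimizing sequence of critical points at level $c_\gs$).
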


We first note that the following necessary condition for existence of ground states holds.

\begin{lemma}\label{lem:positive_energy}
	$c_\gs > 0$.
\end{lemma}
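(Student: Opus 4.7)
The plan is to show that any nontrivial critical point of $\fc$ must have a uniformly positive $\fcd$-norm, which in turn gives a uniformly positive energy.

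First I would compute $\fc$ at a nontrivial critical point $u$. Testing $\fc'(u) = 0$ against $u$ itself yields the Nehari-type identity
\begin{align*}
    \norm{u}_\fcd^2 = \int_{\R \times \T} h(x) u^4 \der (x,t),
\end{align*}
so that $\fc(u) = \tfrac12 \norm{u}_\fcd^2 - \tfrac14 \norm{u}_\fcd^2 = \tfrac14 \norm{u}_\fcd^2$.

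Next I would use the continuous embedding $\fcd \embeds L^4(\R \times \T)$, which is guaranteed by \cref{lem:Lp_embedding} since $p^\star = \tfrac{4}{2-\alpha} > 4$ by assumption \ref{ass:nonlin:decay} (which forces $\alpha > 1$). Together with $h \in L^\infty(\R)$, this gives a constant $C > 0$ such that
\begin{align*}
    \norm{u}_\fcd^2 = \int_{\R \times \T} h(x) u^4 \der(x,t) \leq \norm{h}_\infty \norm{u}_4^4 \leq C \norm{u}_\fcd^4.
\end{align*}
Since $u \neq 0$ we may divide by $\norm{u}_\fcd^2$ to obtain the lower bound $\norm{u}_\fcd^2 \geq 1/C$, and therefore $\fc(u) = \tfrac14 \norm{u}_\fcd^2 \geq \tfrac{1}{4C} > 0$. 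Taking the infimum over all nontrivial critical points yields $c_\gs \geq \tfrac{1}{4C} > 0$ (with the convention that the claim is vacuously true when no such critical points exist, as then $c_\gs = +\infty$).

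There is no serious obstacle here; the argument is a standard application of the Nehari identity combined with the subcritical embedding. The only point that deserves care is verifying that $p^\star > 4$ is indeed ensured by \ref{ass:nonlin:decay}, so that $L^4$-control by the $\fcd$-norm is available — this is immediate from $\alpha > \alpha^\star \geq 1$.
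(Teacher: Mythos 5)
Your proof is correct and follows essentially the same route as the paper: derive the Nehari identity by testing $\fc'(u)=0$ against $u$, use the $L^4$-embedding from \cref{lem:Lp_embedding} to get a uniform lower bound on the norm of a nontrivial critical point, and conclude via $\fc(u)=\fc(u)-\tfrac14\fc'(u)[u]$. The only small imprecision is that testing against $u$ yields $\ip{u}{u}_\fcd=\int h\,u^4$ rather than $\norm{u}_\fcd^2=\iip{u}{u}_\fcd=\int h\,u^4$, but since the two inner products in \cref{def:slab:functional_domain} are equivalent this does not affect the conclusion.
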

\begin{proof}
	By \cref{lem:Lp_embedding} we have $\fc'(u)[u] = \ip{u}{u}_\fcd + \landauO(\norm{u}_\fcd^4)$ as $u \to 0$, so there exists $c > 0$ with $\norm{u}_\fcd \geq c$ for all $u \in \fcd\setminus\set0$ with $\fc'(u) = 0$. The claim follows from this since for every critical point $u$ of $\fc$ we have $\fc(u) = \fc(u) - \tfrac14 \fc'(u)[u] = \tfrac14 \ip{u}{u}_\fcd$.
\end{proof}

We will extract the ground state as a limit of a suitable Palais-Smale sequence. Next we use the mountain-pass theorem to obtain a particular Palais-Smale sequence.

\begin{proposition}\label{prop:mountain_pass}
	There exists $u_0 \in \fcd$ with $\fc(u_0) < 0$. For such $u_0$, the mountain-pass energy level
	\begin{align*}
		c_\mopa \coloneqq 
		\inf_{\substack{\gamma \in C([0;1]; \fcd) \\ \gamma(0) = 0, \gamma(1) = u_0}}
		\sup_{s \in [0, 1]} \fc(\gamma(s))
	\end{align*}
	is positive and there exists a Palais-Smale sequence for $\fc$ at level $c_\mopa$.
\end{proposition}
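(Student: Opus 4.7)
I first produce a test element showing that the quartic term dominates the quadratic one in some direction. The set $\regularK$ must be nonempty (else $\fcd = \set{0}$ and the statement is vacuous), so I fix $k_0 \in \regularK$, a bump $\phi \in C_c^\infty(\R)$, and consider
\[
v_0(x,t) \coloneqq \phi(x)\cos(k_0 \omega t) \in \fcd.
\]
Since only $k_0 \in \regularK$ appears as a Fourier index, $v_0$ lies in $\fcd$. Orthogonality of trigonometric monomials on $\T$ yields
\[
\int_{\R \times \T} h(x)\, v_0(x,t)^4 \der(x,t) = \tfrac{3T}{8} \int_\R h(x)\phi(x)^4 \der x,
\]
and since $h \not\leq 0$ by assumption, I choose $\phi$ supported on a set of positive measure where $h > 0$, making the integral strictly positive. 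Along the ray $s \mapsto s v_0$ one then has
\[
\fc(s v_0) = \tfrac{s^2}{2}\norm{v_0}_\fcd^2 - \tfrac{s^4}{4}\int_{\R\times\T} h v_0^4 \der(x,t) \to -\infty
\]
as $s \to \infty$, so $u_0 \coloneqq s v_0$ works for $s$ large enough.

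\textbf{Mountain-pass geometry.} Since $\alpha > 1$ forces $p^\star > 4$, \cref{lem:Lp_embedding} supplies a continuous embedding $\fcd \embeds L^4(\R \times \T)$ with some constant $C > 0$, giving
\[
\fc(u) \geq \tfrac{1}{2}\norm{u}_\fcd^2 - \tfrac{C\norm{h}_\infty}{4}\norm{u}_\fcd^4
\qquad \text{for all } u \in \fcd.
\]
Choosing $\rho > 0$ small enough that $\rho^2 < 2/(C\norm{h}_\infty)$, the value $\alpha_0 \coloneqq \tfrac{1}{2}\rho^2 - \tfrac{C\norm{h}_\infty}{4}\rho^4$ is strictly positive, $\fc \geq 0$ on the closed ball of radius $\rho$, and $\fc(u) \geq \alpha_0$ for every $u$ with $\norm{u}_\fcd = \rho$. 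Any $u_0 \in \fcd$ with $\fc(u_0) < 0$ must then lie outside that ball, so every continuous path $\gamma \colon [0,1] \to \fcd$ from $0$ to $u_0$ crosses the sphere, giving $c_\mopa \geq \alpha_0 > 0$.

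\textbf{Extracting a Palais-Smale sequence.} It remains to apply the mountain-pass theorem. The functional $\fc$ is of class $C^1$ on $\fcd$: the quadratic part is smooth, and $u \mapsto \int h u^4 \der(x,t)$ is $C^1$ by the $L^4$-embedding together with $h \in L^\infty(\R)$. The Ambrosetti-Rabinowitz mountain-pass theorem, in the version that does not presuppose the Palais-Smale condition (via Ekeland's variational principle on almost-minimising paths), then furnishes a sequence $(u_n) \subseteq \fcd$ with $\fc(u_n) \to c_\mopa$ and $\fc'(u_n) \to 0$ in $\fcd'$. No serious obstacle is expected in this proposition itself; the genuine difficulty — boundedness of $(u_n)$ and extraction of a nontrivial limit — is postponed to the compactness analysis in the next section, where alternatives \ref{ass:nonlin:loc} and \ref{ass:nonlin:per+loc} are handled by different methods.
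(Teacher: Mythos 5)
Your proposal follows essentially the same route as the paper: a real single-mode ansatz $\phi(x)\cos(k_0\omega t)$ to make $\fc$ negative along a ray, the $L^4$-embedding from \cref{lem:Lp_embedding} (valid because $\alpha>\alpha^\star\geq 1$ gives $p^\star>4$) to produce the mountain-pass geometry, and the quantitative mountain-pass theorem without the Palais--Smale condition. Two small points: the paper normalizes the measure on $\T$ so that $\int_\T \der t = 1$, so the time integral $\int_\T \cos^4(k_0\omega t)\der t$ equals $\tfrac{3}{8}$ rather than $\tfrac{3T}{8}$ (harmless for the sign argument); and the phrase ``choose $\phi$ supported on a set of positive measure where $h>0$'' is not quite right in general, since $\{h>0\}$ need only be measurable and may contain no open set --- the paper's argument via density of $C_c^\infty(\R)$ in $L^4(\R)$ (approximate $\bbone_A$ for a positive-measure set $A\subseteq\{h>0\}$ to get $\int_\R h\phi^4\der x>0$) is the clean way to produce such a $\phi$, and you should use it.
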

\begin{proof}
	For the construction of a suitable $u_0$ we choose $\varphi \in C_c^\infty(\R)$ with $\int_\R h \varphi^4 \der x> 0$, which exists since $h \not \leq 0$ and $C_c^\infty(\R)$ is dense in $L^4(\R)$. We then choose $u_0(x,t) = r \Re[\varphi(x) e_{k_0}(t)]$ for some $k_0 \in \regularK$ and $r>0$. This implies that
	\begin{align*}
		\fc(u_0) = \tfrac{1}{2} r^2 \ip{\Re[\varphi(x) e_{k_0}(t)]}{\Re[\varphi(x) e_{k_0}(t)]}_\fcd - \tfrac{3}{32} r^4 \int_\R h \varphi^4 \der x
	\end{align*}
	is negative, provided $r$ has been chosen sufficiently large. By the embedding $\fcd \embeds L^4$ we moreover have
	\begin{align*}
		\fc(u) = \tfrac12 \ip{u}{u}_\fcd - \landauO\left(\norm{u}_\fcd^4\right)
	\end{align*}
	as $u \to 0$. Thus $c_\mopa > 0$ and by the mountain pass theorem, cf. \cite{willem}, there exists a Palais-Smale sequence $(u_n)$ at level $c$. 
\end{proof}

\begin{lemma}\label{lem:ps_bounded}
	Any Palais-Smale sequence for $\fc$ is bounded.
\end{lemma}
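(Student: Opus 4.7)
The plan is the standard Nehari-type identity available for mountain-pass functionals with a $4$-homogeneous nonlinearity. For any $u\in\fcd$ I will compute the combination $\fc(u)-\tfrac14\fc'(u)[u]$. Because the quadratic part contributes with coefficient $\tfrac12-\tfrac14=\tfrac14$ and the quartic part cancels exactly, one obtains
\begin{align*}
  \fc(u)-\tfrac14\fc'(u)[u]=\tfrac14\ip{u}{u}_\fcd=\tfrac14\norm{u}_\fcd^2.
\end{align*}
This identity is the whole engine of the proof; it already appeared implicitly in the proof of \cref{lem:positive_energy}.

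Now let $(u_n)$ be a Palais-Smale sequence, so that $\fc(u_n)\to c$ in $\R$ and $\fc'(u_n)\to 0$ in $\fcd'$. Applying the identity to $u_n$ and taking absolute values yields
\begin{align*}
  \tfrac14\norm{u_n}_\fcd^2
  \leq \abs{\fc(u_n)}+\tfrac14\norm{\fc'(u_n)}_{\fcd'}\norm{u_n}_\fcd.
\end{align*}
The first term on the right is bounded because the sequence $\fc(u_n)$ converges. For the second term I will use that $\norm{\fc'(u_n)}_{\fcd'}\to 0$, in particular this sequence is bounded.

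From here the boundedness of $\norm{u_n}_\fcd$ is a one-line quadratic argument: if $\norm{u_n}_\fcd\to\infty$ along a subsequence, dividing both sides of the inequality by $\norm{u_n}_\fcd^2$ gives
\begin{align*}
  \tfrac14\leq\frac{\abs{\fc(u_n)}}{\norm{u_n}_\fcd^2}+\tfrac14\frac{\norm{\fc'(u_n)}_{\fcd'}}{\norm{u_n}_\fcd}\longrightarrow 0,
\end{align*}
a contradiction. Hence $\norm{u_n}_\fcd$ is bounded.

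There is no real obstacle here; the only potential pitfall is confirming the sign in the identity, which works precisely because the nonlinearity is quartic (so that $\fc$ has the correct Ambrosetti-Rabinowitz-type structure with the pure quadratic as lower order term entering with the good sign after subtracting $\tfrac14\fc'(u_n)[u_n]$). The remaining task of extracting a convergent subsequence and identifying its limit as a nontrivial critical point is deferred to the compactness arguments in the subsequent subsections, and does not concern the boundedness statement itself.
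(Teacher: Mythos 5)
Your proof is correct and uses the same identity as the paper, namely that $\fc(u_n) - \tfrac14 \fc'(u_n)[u_n] = \tfrac14\ip{u_n}{u_n}_\fcd$ (the paper writes it as $\ip{u_n}{u_n}_\fcd = 4\fc(u_n) - \fc'(u_n)[u_n]$), combined with the Palais-Smale conditions. The final contradiction step you spell out is a slight elaboration of the paper's one-line "$= 4c + \landauo(1) + \landauo(\norm{u_n}_\fcd)$, which shows boundedness," but the argument is the same.
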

\begin{proof}
	Let $(u_n)$ be a Palais-Smale sequence at level $c$. Then 
	\begin{align*}
		\ip{u_n}{u_n}_\fcd = 4 \fc(u_n) - \fc'(u_n)[u_n]
		= 4 c + \landauo(1) + \landauo(\norm{u_n}_\fcd)
	\end{align*}
	as $n \to \infty$, which shows that $(u_n)$ is bounded in $\fcd$.
\end{proof}

Next we show the following result on weakly convergent Palais-Smale sequences in our setting.

\begin{lemma}\label{lem:weakly_convergent_ps}
	Let $(u_n)$ be a Palais-Smale sequence for $\fc$ with $\fc(u_n) \to c$ and $u_n \wto u$ in $\fcd$. Then $u$ is a critical point of $\fc$ and $\fc(u) \leq c$. Moreover, if $u \neq 0$ and $c = c_\gs$ then $u$ is a ground state and $u_n \to u$ in $\fcd$.
\end{lemma}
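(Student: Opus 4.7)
The plan is to pass to the weak limit in the critical point equation, compare energies via weak lower semicontinuity, and finally upgrade weak to strong convergence using norm convergence in the Hilbert space $\fcd$.

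For the first claim I would fix an arbitrary $v \in \fcd$ and pass to the limit in
\[
\fc'(u_n)[v] = \ip{u_n}{v}_\fcd - \int_{\R \times \T} h\, u_n^3 v \der(x,t) \to 0.
\]
The linear term converges to $\ip{u}{v}_\fcd$ by weak convergence. For the cubic term, since $(u_n)$ is bounded in $\fcd$ by \cref{lem:ps_bounded} and hence in $L^4$ by \cref{lem:Lp_embedding}, I would reduce to test functions $v$ of compact $x$-support by the density \cref{lem:dense_subset} combined with a Hölder estimate $\bigl|\int h u_n^3 (v - v_R)\bigr| \lesssim \|u_n\|_4^3 \|v - v_R\|_4$. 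For such compactly supported $v$, the local compactness in \cref{lem:Lp_embedding} gives $u_n \to u$ strongly in $L^4$ on $\supp v$, so $u_n^3 \to u^3$ in $L^{4/3}$ on $\supp v$, and since $hv \in L^4$ with compact support, the cubic term passes to the limit. Thus $\fc'(u) = 0$.

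For $\fc(u) \leq c$, I use the identity $\fc(w) - \tfrac14 \fc'(w)[w] = \tfrac14 \|w\|_\fcd^2$ valid for all $w \in \fcd$. Applied to $u_n$ together with the boundedness of $(u_n)$, this gives
\[
\tfrac14 \|u_n\|_\fcd^2 = \fc(u_n) - \tfrac14 \fc'(u_n)[u_n] \to c.
\]
Weak lower semicontinuity of the Hilbert norm then yields $\|u\|_\fcd^2 \leq 4c$, and the same identity applied to the critical point $u$ shows $\fc(u) = \tfrac14 \|u\|_\fcd^2 \leq c$.

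Finally, if $u \neq 0$ and $c = c_\gs$, then $u$ is a nonzero critical point with $c_\gs \leq \fc(u) \leq c = c_\gs$, so equality holds throughout and $u$ is a ground state. From $\fc(u) = c$ we deduce $\|u\|_\fcd^2 = 4c = \lim_{n\to\infty} \|u_n\|_\fcd^2$, and norm convergence combined with weak convergence in the Hilbert space $\fcd$ yields $u_n \to u$ strongly.

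The only delicate step is the limit passage in the cubic nonlinearity over the unbounded domain $\R \times \T$; no concentration-compactness is needed at this point because the test function $v$ is fixed, so a cutoff combined with the local compactness in \cref{lem:Lp_embedding} suffices. The more serious compactness issues (ensuring that the weak limit $u$ is nonzero) are not addressed here and must be handled separately depending on the assumption \ref{ass:nonlin:loc} or \ref{ass:nonlin:per+loc}.
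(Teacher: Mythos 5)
Your proposal is correct and follows essentially the same route as the paper: pass to the limit in $\fc'(u_n)[v]$ for compactly supported $v$ using local compactness and then density; use the identity $\fc(w)-\tfrac14\fc'(w)[w]$ together with weak lower semicontinuity to get $\fc(u)\le c$; and finally combine weak convergence with convergence of the norms to upgrade to strong convergence when $c=c_\gs$ and $u\neq0$.

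One small notational slip: the identity actually reads $\fc(w)-\tfrac14\fc'(w)[w]=\tfrac14\ip{w}{w}_\fcd$, not $\tfrac14\norm{w}_\fcd^2$, since the paper defines $\norm{\cdot}_\fcd$ via the inner product $\iip{\cdot}{\cdot}_\fcd$ and the functional $\fc$ via the equivalent inner product $\ip{\cdot}{\cdot}_\fcd$. This is harmless — $\ip{\cdot}{\cdot}_\fcd$ is an equivalent Hilbert inner product on $\fcd$, so weak lower semicontinuity of $w\mapsto\ip{w}{w}_\fcd$ and the Radon--Riesz implication (weak convergence plus $\ip{u_n}{u_n}_\fcd\to\ip{u}{u}_\fcd$ gives strong convergence) hold verbatim — but the argument should be phrased in terms of $\ip{\cdot}{\cdot}_\fcd$ to match the definition of $\fc$, as the paper does.
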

\begin{proof}
	By \cref{lem:Lp_embedding,lem:ps_bounded} we have $u_n \to u$ in $L^4_\loc$. Thus for compactly supported $v \in \fcd$ it follows that
	\begin{align*}
		\fc'(u_n)[v] = \ip{u_n}{v}_\fcd - \int_{\R \times \T} h(x) u_n^3 v \der (x, t)
		\to \ip{u}{v}_\fcd - \int_{\R \times \T} h(x) u_n^3 v \der (x, t)
		= \fc'(u)[v]
	\end{align*} 
	so that $\fc'(u)[v] = 0$. By a density argument (cf. \cref{lem:dense_subset}) it follows that $u$ is a critical point of $\fc$. Next we calculate
	\begin{align*}
		\fc(u) = \fc(u) - \tfrac14 \fc'(u)[u] = \tfrac14 \ip{u}{u}_\fcd
		\leq \tfrac14 \lim_{n \to \infty} \ip{u_n}{u_n}_\fcd = \lim_{n \to \infty} \fc(u_n) - \tfrac14 \fc'(u_n)[u_n] = c.
	\end{align*}
	If $c = c_\gs$, then we have $\fc(u) \geq c_\gs$ since $u \neq 0$ by assumption, and thus from the above inequality we find $\fc(u) = c_\gs$ and in addition $\ip{u_n}{u_n}_\fcd \to \ip{u}{u}_\fcd$. Combined with $u_n \wto u$ in $\fcd$ this shows $u_n \to u$ in $\fcd$.
\end{proof}

In many situations, e.g. in a translation-invariant setting, there are always Palais-Smale sequences converging weakly to $0$. Therefore the main task in the following will be to find a Palais-Smale sequence with $u_n \wto u \neq 0$. The arguments for this (and the proof of \cref{thm:ground_state}) differ between the types of nonlinearity, and are split into subsections accordingly.

\subsection{Proof of Theorem~\ref{thm:ground_state} for \ref{ass:nonlin:loc} and the purely periodic case of \ref{ass:nonlin:per+loc}}

First we show how to extract a nonzero limit from a given Palais-Smale sequence.

\begin{lemma}\label{lem:ps_convergence_part12}
	Assume \ref{ass:nonlin:loc} or \ref{ass:nonlin:per+loc} with $\calG^\loc, h^\loc\equiv 0$.
	Let $(u_n)$ be a Palais-Smale sequence for $\fc$ at level $c > 0$. Then there exists a critical point $u \in \fcd \setminus\set0$ of $\fc$ with $\fc(u) \leq c$.
\end{lemma}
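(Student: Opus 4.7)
The plan is to extract $u$ as the weak $\fcd$-limit of $(u_n)$ and then argue, separately in each of the two settings, that this limit cannot be zero. By \cref{lem:ps_bounded} the sequence is bounded in $\fcd$, so, passing to a subsequence, $u_n \wto u$ in $\fcd$, and \cref{lem:weakly_convergent_ps} already yields $\fc'(u) = 0$ together with $\fc(u) \leq c$. Thus the whole work reduces to showing $u \neq 0$.

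For assumption \ref{ass:nonlin:loc}, I would exploit the decay $h(x) \to 0$ as $\abs{x} \to \infty$ to show that the nonlinear term is weakly continuous along $(u_n)$. Given $\eps > 0$, choose $R > 0$ with $\abs{h(x)} < \eps$ for $\abs{x} > R$. The portion of $\int_{\R \times \T} h (u_n^4 - u^4) \der(x,t)$ over $[-R,R] \times \T$ tends to zero along a subsequence by the compact embedding $\fcd \embeds L^4_\loc$ of \cref{lem:Lp_embedding}, while the portion over the complement is bounded by $C \eps$ via the continuous embedding $\fcd \embeds L^4(\R \times \T)$ and the uniform $\fcd$-bound on $(u_n)$. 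Letting $\eps \to 0$ gives $\int h u_n^4 \to \int h u^4$. The identity $\fc'(u_n)[u_n] = \norm{u_n}_\fcd^2 - \int h u_n^4 \to 0$ then forces $\norm{u_n}_\fcd^2 \to \int h u^4 = \norm{u}_\fcd^2$, whence $c = \lim \fc(u_n) = \tfrac14 \norm{u}_\fcd^2$. Since $c > 0$, we conclude $u \neq 0$.

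In the purely periodic subcase of \ref{ass:nonlin:per+loc}, the nonlinearity is no longer compact, but $\fc$ is invariant under the discrete translation group $X \Z$ generated by the common period $X > 0$ of $\calG^\per$ and $h^\per$. I would first apply \cref{lem:concentration-compactness} with $\tilde p = 2$: if $\sup_{x\in\R} \norm{u_n}_{L^2([x-r,x+r]\times\T)} \to 0$ held for some $r > 0$, then $u_n \to 0$ in $L^4(\R \times \T)$, the same computation as in the previous paragraph would give $\norm{u_n}_\fcd \to 0$ and hence $\fc(u_n) \to 0$, contradicting $c > 0$. Thus there exist $\delta, r > 0$ and $x_n \in \R$ with $\norm{u_n}_{L^2([x_n-r,x_n+r]\times\T)} \geq \delta$. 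Writing $x_n = m_n X + y_n$ with $m_n \in \Z$, $y_n \in [0, X)$, and setting $\tilde u_n(x,t) \coloneqq u_n(x + m_n X, t)$, the $X$-periodicity of $\calG^\per$ and $h^\per$ yields $\norm{\tilde u_n}_\fcd = \norm{u_n}_\fcd$ and $\fc(\tilde u_n) = \fc(u_n)$, $\fc'(\tilde u_n) = \fc'(u_n) \circ \tau_{-m_n X}$ (in the dual sense), so $(\tilde u_n)$ is again a Palais--Smale sequence at level $c$, and $\norm{\tilde u_n}_{L^2([y_n-r, y_n+r]\times\T)} \geq \delta$ with $[y_n - r, y_n + r] \subseteq [-r, X + r]$. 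Along a further subsequence $\tilde u_n \wto \tilde u$ in $\fcd$, and the compact embedding $\fcd \embeds L^2_\loc$ from \cref{lem:Lp_embedding} propagates the lower bound $\delta$ to $\norm{\tilde u}_{L^2([-r, X + r] \times \T)}$, forcing $\tilde u \neq 0$. Applying \cref{lem:weakly_convergent_ps} to $(\tilde u_n)$ then provides the desired nontrivial critical point with energy at most $c$.

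The main obstacle is the noncompactness of the unbounded domain $\R \times \T$: in case \ref{ass:nonlin:loc} it is resolved automatically because the decay of $h$ makes the nonlinear term weakly continuous, whereas in the purely periodic case compactness must be recovered by pairing the concentration-compactness principle with the discrete translation invariance of $\fc$ in order to relocate an otherwise escaping Palais--Smale sequence into a bounded region.
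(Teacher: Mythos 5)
Your proof is correct and follows essentially the same route as the paper: in case (A6a) you exploit the decay of $h$ to obtain compactness of the nonlinear term (the paper phrases this as $h\,u_n^3 \to h\,u^3$ in $L^{4/3}$ and deduces strong $\fcd$-convergence; your direct computation $\int h\,u_n^4 \to \int h\,u^4 = \|u\|_\fcd^2$ is an equivalent variant that also yields $\|u_n\|_\fcd \to \|u\|_\fcd$). In the purely periodic case you apply \cref{lem:concentration-compactness} and translate by lattice shifts exactly as the paper does (with $\tilde p = 2$ rather than $\tilde p = 4$, which is immaterial), and then conclude via \cref{lem:weakly_convergent_ps}.
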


\begin{proof}
	\emph{Part 1:} We consider \ref{ass:nonlin:loc}. 
	Up to a subsequence we have $u_n \wto u$ in $\fcd$ and $u_n \to u$ in $L^4_\loc$ by \cref{lem:Lp_embedding,lem:ps_bounded}, where \cref{lem:weakly_convergent_ps} guarantees that $u$ is a critical point of $\fc$. Moreover, since $h(x) \to 0$ as $x \to \pm \infty$, we have $h(x) u_n^3 \to h(x) u^3$ in $L^{\nicefrac43}(\R \times \T)$. This implies for $v \in \fcd$ that
	\begin{align*}
		\ip{u_n - u}{v}_\fcd
		= \fc'(u_n)[v] - \fc'(u)[v] + \int_{\R \times \T} h(x) (u^3 - u_n^3) v \der (x, t)
		= \landauo(\norm{v}_\fcd)
	\end{align*}
	as $n \to \infty$. So $u_n \to u$ in $\fcd$, and in particular $\fc(u) = c$ and $u \neq 0$ hold.

	\medskip
	\emph{Part 2:} We now consider \ref{ass:nonlin:per+loc} with $\calG^\loc, h^\loc\equiv 0$.
	Since
	\begin{align*}
		\int_{\R \times \T} h(x) u_n^4
		= 4 \fc(u_n) - 2 \fc'(u_n)[u_n] \to 4 c,
	\end{align*}
	we have $u_n \not \to 0$ in $L^4(\R \times \T)$. Let $X>0$ denote the period of $\calG$ and $h$. By \cref{lem:concentration-compactness} there exist $x_n \in \R$ with
	\begin{align}\label{eq:loc:positive_L4_bound}
		\liminf_{n \to \infty} \norm{u_n}_{L^4([x_n - X, x_n + X] \times \T)} > 0
	\end{align}
	and w.l.o.g. we may assume $x_n \in X \Z$. Let us define a new sequence $\tilde u_n$ by $\tilde u_n(x, t) = u_n(x - x_n, t)$, so that $\fc(\tilde u_n) = \fc(u_n) \to c$ and $\fc'(\tilde u_n) \to 0$. Up to a subsequence we have $\tilde u_n \wto u$ in $\fcd$ where $u \neq 0$ by \eqref{eq:loc:positive_L4_bound}. The claim now follows from \cref{lem:weakly_convergent_ps} applied to $(\tilde u_n)$.
\end{proof}

\begin{proof}[Proof of \cref{thm:ground_state} for \ref{ass:nonlin:loc} and \ref{ass:nonlin:per+loc} with $\calG^\loc, h^\loc\equiv 0$]
	Combining \cref{prop:mountain_pass} and \cref{lem:ps_convergence_part12} we see that there exists a nonzero critical point of $\fc$. Thus $c_\gs < \infty$ and by definition of $c_\gs$ there exists a sequence $(u_n)$ of critical points of $\fc$ with $\fc(u_n) \to c_\gs$. Since $c_\gs > 0$ by \cref{lem:positive_energy}, applying \cref{lem:ps_convergence_part12} to $(u_n)$ we find a ground state of $\fc$.
\end{proof}

\subsection{Proof of Theorem~\ref{thm:ground_state} for \ref{ass:nonlin:per+loc}} 
We call the problem with $\calG, h$ replaced by $\calG^\per, h^\per$ the \emph{periodic} problem and denote it with the superscript ``$\per$''. The previous subsection guarantees the existence of a periodic ground state $u^\per$ of $\fc^\per$.

Note that both $\fc$ and $\fc^\per$ are defined on the same Hilbert space $\fcd$. The assumption \ref{ass:nonlin:per+loc} implies that $\fc \leq \fc^\per$ on $\fcd$ and, assuming $(\calG^\loc, h^\loc) \neq 0$, the inequality is even strict on functions which do not have zero sets of positive measure. For our nonlocal problem \eqref{eq:effective_problem} we do not know whether or not a unique continuation theorem holds, which is why we cannot rule out that a critical point of $\fc$ or $\fc^\per$ could have a zero set of positive measure. Nevertheless, the subsequent arguments work without a unique continuation theorem and are based on the comparison of energy levels between our current and the periodic problem.

\begin{lemma}\label{lem:compare:energy}
	Assume that no ground state of $\fc^\per$ is a critical point of $\fc$.
	Then there exists $u_0 \in \fcd$ with $\fc(u_0) < 0$ such that the mountain-pass energy level
	\begin{align*}
		c_\mopa \coloneqq 
		\inf_{\substack{\gamma \in C([0;1]; \fcd) \\ \gamma(0) = 0, \gamma(1) = u_0}}
		\sup_{s \in [0, 1]} \fc(\gamma(s))
	\end{align*}
	satisfies $0 < c_\mopa < c_\gs^\per$.
\end{lemma}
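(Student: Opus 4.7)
The plan is to take $u_0$ on the ray through the periodic ground state $u^\per$ furnished by the previous subsection. Because the hypothesis of the lemma excludes $u^\per$ from being a critical point of $\fc$, a direct comparison of the mountain-pass energy along this ray with $c_\gs^\per$ will yield strict inequality. First I would note that testing $(\fc^\per)'(u^\per) = 0$ against $u^\per$ itself gives $b^\per \coloneqq \int_{\R \times \T} h^\per (u^\per)^4 \der(x,t) = \norm{u^\per}_{\fcd^\per}^2 = 4 c_\gs^\per > 0$, where $\norm{\cdot}_{\fcd^\per}$ denotes the analogue of $\norm{\cdot}_\fcd$ with $\calG$ replaced by $\calG^\per$. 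Together with $h \ge h^\per$ from \ref{ass:nonlin:per+loc} this also forces $b \coloneqq \int h (u^\per)^4 \der(x,t) \ge b^\per > 0$, so $\fc(t u^\per) \to -\infty$ as $t \to \infty$ and I may choose $u_0 \coloneqq R u^\per$ with $R$ large enough that $\fc(u_0) < 0$. The positivity $c_\mopa > 0$ then follows exactly as in \cref{prop:mountain_pass} from $\fc(u) = \tfrac12 \norm{u}_\fcd^2 + \landauO(\norm{u}_\fcd^4)$ near the origin.

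For the upper bound on $c_\mopa$, I would use the straight segment $\gamma(s) = s u_0$ as competitor. Writing $a \coloneqq \norm{u^\per}_\fcd^2$ and $a^\per \coloneqq \norm{u^\per}_{\fcd^\per}^2$, a one-variable maximization yields
\begin{align*}
	c_\mopa \le \max_{t \ge 0} \fc(t u^\per) = \frac{a^2}{4 b},
	\qquad
	c_\gs^\per = \fc^\per(u^\per) = \tfrac14 a^\per = \frac{(a^\per)^2}{4 b^\per},
\end{align*}
so the task reduces to the numerical inequality $a^2 b^\per < (a^\per)^2 b$. The positive definiteness of $\calG^\loc(x)$ from \ref{ass:nonlin:per+loc} combined with $\F_k[\calN] > 0$ on $\regularK$ from \ref{ass:nonlin:decay} delivers $a \le a^\per$ by a direct term-by-term inspection of the Fourier series representing the inner product, and $h^\loc \ge 0$ gives $b \ge b^\per$.

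The hard part will be excluding simultaneous equality $a = a^\per$ and $b = b^\per$. In that case I would first deduce $\F_k[\calG^\loc(x)] \abs{\hat u^\per_k(x)}^2 = 0$ for every $k \in \regularK$ and a.e.\ $x \in \R$, as well as $h^\loc(x) (u^\per(x,t))^4 = 0$ a.e. Non-negativity of the factors $\F_k[\calG^\loc(x)]$ and $h^\loc$ then upgrades these squared-type identities to the linear-type identities $\F_k[\calG^\loc(x)] \hat u^\per_k(x) = 0$ and $h^\loc(x) (u^\per(x,t))^3 = 0$ a.e. A direct computation of $\fc'(u^\per)[v] - (\fc^\per)'(u^\per)[v]$ then expresses this difference as the sum of two terms, each a pairing with one of the vanishing quantities, so the difference is zero for every $v \in \fcd$. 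Since $(\fc^\per)'(u^\per) = 0$, this shows $u^\per$ is also a critical point of $\fc$, contradicting the lemma's hypothesis. Hence at least one of the comparisons $a \le a^\per$, $b \ge b^\per$ must be strict, and $c_\mopa < c_\gs^\per$ follows.
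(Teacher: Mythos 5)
Your proof is correct and follows essentially the same strategy as the paper: test the mountain pass along the ray $\R_{\geq 0} u^\per$, use the sign assumptions on $\calG^\loc, h^\loc$ to compare the quadratic and quartic parts of $\fc$ and $\fc^\per$, and invoke the hypothesis that $u^\per$ is not a critical point of $\fc$ to obtain strictness in at least one comparison. The paper simply phrases the conclusion as the pointwise inequality $\fc(s u^\per) < \fc^\per(s u^\per)$ for $s \neq 0$ and takes $u_0 = \sqrt{2}\,u^\per$, whereas you compute the maxima explicitly as $a^2/(4b)$ and $(a^\per)^2/(4b^\per)$ — a stylistic rather than substantive difference.
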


\begin{proof}
	Let $u^\per$ be a ground state of $\fc^\per$. As $u^\per$ is not a critical point of $\fc$, we have $\calG^\loc \ast u^\per \neq 0$ or $h^\loc (u^\per)^3 \neq 0$. By the assumptions on the signs of $\calG^\loc, h^\loc$ we moreover have
	\begin{align*}
		\ip{u^\per}{u^\per}_\fcd \leq \ip{u^\per}{u^\per}_\fcd^\per
		\quad\text{and}\quad
		\int_{\R\times\T} h(x) (u^\per)^4 \der (x, t) \geq \int_{\R\times\T} h^\per(x) (u^\per)^4 \der (x, t)
	\end{align*}
	where at least one inequality is strict. In particular, $\fc(s u^\per) < \fc^\per(s u^\per)$ holds for $s \neq 0$.
	Now set $u_0 \coloneqq \sqrt{2} u^\per$. Then $\fc(u_0) < \fc^\per(u_0) = 0$ and
	\begin{align*}
		c_\mopa \leq \max_{s \in [0, 1]} \fc(s u_0)
		< \max_{s \in [0, 1]} \fc^\per(s u_0) 
		= \fc^\per(u^\per) = c_\gs^\per.
	\end{align*}
	Positivity of $c_\mopa$ was already shown in \cref{prop:mountain_pass}.
\end{proof}

Similar to \cref{lem:ps_convergence_part12} of the previous subsection, we require a result on convergence of a given Palais-Smale sequence, which we present next.

\begin{lemma}\label{lem:ps_convergence_part3}
	Assume \ref{ass:nonlin:per+loc}.
	Let $u_n$ be a Palais-Smale sequence for $\fc$ at level $c \in (0, c_\gs^\per)$. Then there also exists a critical point $u \in \fcd \setminus\set0$ of $\fc$ with $\fc(u) \leq c$.
\end{lemma}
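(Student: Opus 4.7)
The plan is to obtain the critical point as the weak limit of a subsequence of $(u_n)$, and to rule out a zero weak limit by a translation argument that converts the problem into the purely periodic one, whose ground state level is strictly above $c$. By \cref{lem:ps_bounded}, $(u_n)$ is bounded in $\fcd$, so a subsequence satisfies $u_n \wto u$ in $\fcd$, and \cref{lem:weakly_convergent_ps} already yields that $u$ is a critical point of $\fc$ with $\fc(u) \leq c$. It therefore suffices to prove $u \neq 0$, and I argue by contradiction: assume $u = 0$. From $\fc(u_n) \to c > 0$ and $\fc'(u_n)[u_n] = \landauo(1)$ we have $\int_{\R \times \T} h u_n^4 \der(x, t) = 4 \fc(u_n) - \fc'(u_n)[u_n] \to 4c \neq 0$, so $u_n \not\to 0$ in $L^4(\R \times \T)$. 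Letting $X > 0$ denote the common period of $\calG^\per, h^\per$, \cref{lem:concentration-compactness} supplies $x_n \in X\Z$ with $\liminf_n \norm{u_n}_{L^4([x_n - X, x_n + X] \times \T)} > 0$, and the compact embedding $\fcd \embeds\embeds L^4_\loc$ together with $u_n \wto 0$ forces $\abs{x_n} \to \infty$. Setting $\tilde u_n(x, t) \coloneqq u_n(x - x_n, t)$ and passing to a further subsequence, $\tilde u_n \wto \tilde u$ in $\fcd$ with $\tilde u \neq 0$ (by compactness and the local $L^4$-lower bound).

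The main step is to show that $(\tilde u_n)$ is a Palais--Smale sequence for $\fc^\per$ at the same level $c$. Since $\calG^\per, h^\per$ are $X$-periodic and $x_n \in X\Z$, the translation $u \mapsto u(\cdot - x_n, \cdot)$ leaves $\fc^\per$ invariant and is a uniformly bounded isomorphism of $\fcd$ (because $\calG^\loc$ is uniformly bounded in $\calM(\T)$), so $\fc^\per(\tilde u_n) = \fc^\per(u_n)$ and $\norm{(\fc^\per)'(\tilde u_n)}_{\fcd'} \lesssim \norm{(\fc^\per)'(u_n)}_{\fcd'}$; it therefore suffices to prove
\begin{align*}
	\fc^\per(u_n) - \fc(u_n) \to 0 \text{ in } \R \quad \text{and} \quad (\fc^\per)'(u_n) - \fc'(u_n) \to 0 \text{ in } \fcd'.
\end{align*}
Both differences consist of a quartic term involving $h^\loc$ and a nonlocal quadratic form $A(u, v) \coloneqq \sum_{k \in \regularK} \F_k[\calN]^{-1} \int_\R \F_k[\calG^\loc(x)] \hat u_k \overline{\hat v_k} \der x$ involving $\calG^\loc$. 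They are handled by the splitting $\R = [-R, R] \cup \{\abs x > R\}$: the compact embedding $\fcd \embeds\embeds L^p_\loc$ together with $u_n \wto 0$ makes the inner integrals vanish as $n \to \infty$ for each fixed $R$, while the decay $\norm{\calG^\loc(x)}_{\calM(\T)}, \abs{h^\loc(x)} \to 0$ as $\abs x \to \infty$ makes the outer contributions small as $R \to \infty$. For the derivative bound on the quadratic piece, Cauchy--Schwarz on $A$ yields $\abs{A(u_n, v)} \leq A(u_n, u_n)^{1/2} A(v, v)^{1/2} \lesssim A(u_n, u_n)^{1/2} \norm v_\fcd$, which reduces the matter to the diagonal case.

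Once the Palais--Smale property is established, \cref{lem:weakly_convergent_ps} applied to $\fc^\per$ and $(\tilde u_n)$ gives that $\tilde u$ is a critical point of $\fc^\per$ with $\fc^\per(\tilde u) \leq c$; but $\tilde u \neq 0$ then forces $c_\gs^\per \leq \fc^\per(\tilde u) \leq c < c_\gs^\per$, the required contradiction. I expect the principal technical obstacle to be the vanishing $A(u_n, u_n) \to 0$: the most natural termwise Fourier estimate produces a $k$-series whose summands are uniformly bounded but not summable, so dominated convergence in the frequency index is not directly available, and the argument must instead exploit the extra temporal regularity of $\fcd$-functions (encoded through $\F_k[\calN] \gtrsim \abs k^{-\beta}$, placing $u$ into $L^2_x H^{\beta/2}_t$) together with a vector-valued Rellich-type compactness argument on $[-R, R]$.
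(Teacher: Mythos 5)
Your high-level strategy — translate by $x_n$ and compare against the periodic problem — coincides with the paper's, and the preliminary reductions (boundedness, local $L^4$ mass at $x_n$, $\abs{x_n}\to\infty$, nonzero weak limit $\tilde u$ of $\tilde u_n$) all match. The gap is exactly where you flag it. The intermediate claim that $(\tilde u_n)$ is a Palais--Smale sequence for $\fc^\per$, equivalently that $\norm{(\fc^\per-\fc)'(u_n)}_{\fcd'}\to 0$ and $(\fc^\per-\fc)(u_n)\to 0$, requires $A(u_n,u_n)\to 0$, and this does not follow from the ingredients you invoke. Assumption~\ref{ass:nonlin:per+loc} imposes no decay in $k$ on $\F_k[\calG^\loc(x)]$; it can, for instance, be a nonnegative multiple of the Dirac measure on $\T$, in which case $\F_k[\calG^\loc(x)]$ is independent of $k$. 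After your $\R=[-R,R]\cup\set{\abs{x}>R}$ splitting, the inner contribution to $A(u_n,u_n)$ is comparable to $\sum_{k\in\regularK}\F_k[\calN]^{-1}\int_{-R}^R\abs{\F_k[u_n]}^2\der x$, which is exactly the zeroth-order part of the $\fcd$-norm localized to $[-R,R]$ with \emph{no} gain in the temporal weight. The local embedding into this weighted $L^2$-space is therefore not compact: a bounded $\fcd$-sequence supported in $[-R,R]$ that concentrates at temporal frequencies $k_n\to\infty$ in $\regularK$ tends weakly to $0$ and tends to $0$ in $L^4_\loc$, yet keeps this quantity bounded away from $0$. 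So the vector-valued Rellich / $L^2_xH^{\beta/2}_t$ route you sketch cannot close the gap, because there is no excess temporal smoothness to trade against local spatial compactness.

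The paper avoids the issue by proving only the much weaker statement that the weak limit $u$ of $\tilde u_n$ is a critical point of $\fc^\per$. It tests $\fc'(u_n)$ against the translates $v_n=v(\cdot+x_n)$ of a \emph{fixed} compactly supported $v$; after Cauchy--Schwarz the troublesome term is bounded by a constant times $\bigl(\sup_{x\in\supp v}\norm{\calG^\loc(x-x_n)}_{\calM(\T)}\bigr)^{1/2}\norm{\tilde u_n}_\fcd\norm{v}_\fcd$, so the decay of $\calG^\loc$ is placed on the fixed $v$-factor (small since $\supp v$ is fixed and $\abs{x_n}\to\infty$) while the varying $\tilde u_n$-factor is merely bounded. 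Since this yields only weak-$*$ convergence of $(\fc^\per)'(\tilde u_n)$, the energy bound $\fc^\per(u)\leq c$ is not available via \cref{lem:weakly_convergent_ps}; the paper instead proves $\ip{u}{u}_\fcd^\per\leq\liminf_n\ip{u_n}{u_n}_\fcd$ by a Fatou-type argument, restricting the quadratic form to $[x_n-R,x_n+R]$, where $V_k^\per$ and $V_k$ differ by $o(1)$ as $n\to\infty$, and discarding the complement by positivity of the integrand (which uses \ref{ass:upper:bound}). Your route, if it worked, would bypass that Fatou step, but the Palais--Smale claim it rests on is not established.
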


\begin{proof}
	We denote by $X$ the spatial period of $\calG^\per, h^\per$. As in the proof of \cref{lem:ps_convergence_part12}, Part 2, we have that $u_n \not \to 0$ in $L^4(\R \times \T)$ and that a sequence $x_n \in X \Z$ exists such that 
	\begin{align*}
		\liminf_{n \to \infty} \norm{u_n}_{L^4([x_n - X, x_n + X] \times \T)} > 0.
	\end{align*}
	We claim that $u_n \not \to 0$ in $L^4_\loc$ along any subsequence. 

	Assume for a contradiction that there exists a subsequence of $(u_n)$, which we again denote by $(u_n)$, such that $u_n \to 0$ in $L^4_\loc$. 
	Since $u_n \not\to 0$ in $L^4$, we necessarily have $\abs{x_n} \to \infty$. We define $\tilde u_n$ by $\tilde u_n(x, t) = u_n(x - x_n, t)$. Then up to a subsequence we have $\tilde u_n \wto u$ in $\fcd$ and $\tilde u_n \to u$ in $L^4_\loc$ for some $u \in \fcd \setminus\set0$. For compactly supported $v \in \fcd$ we set $v_n(x, t) = v(x + x_n, t)$ and calculate
	\begin{align*}
		\fc'(u_n)[v_n]
		&= \ip{u_n}{v_n}_\fcd - \int_{\R \times \T} h(x) u_n^3 v_n \der(x, t)
		\\ &= \ip{u_n}{v_n}_\fcd^\per - \int_{\R \times \T} h^\per(x) u_n^3 v_n \der(x, t)
		\\ &\qquad- \sum_{k \in \regularK} \int_\R \frac{\F_k[\calG^\loc(x)]}{\F_k[\calN]} \bigl(\F_k[u_n] \overline{\F_k[v_n]}\bigr) \der x
		- \int_{\R \times \T} h^\loc(x) u_n^3 v_n \der(x, t)
		\\ &= \ip{\tilde u_n}{v}_\fcd^\per - \int_{\R \times \T} h^\per(x) {\tilde u_n}^3 v \der(x, t)
		\\ &\qquad- \sum_{k \in \regularK} \int_\R \frac{\F_k[\calG^\loc(x - x_n)]}{\F_k[\calN]} \bigl(\F_k[\tilde u_n] \overline{\F_k[v]}\bigr) \der x
		- \int_{\R \times \T} h^\loc(x - x_n) {\tilde u_n}^3 v \der(x, t)
		\\ &\to \ip{u}{v}_\fcd^\per - \int_{\R \times \T} h^\per(x) u^3 v \der (x, t)
		= (\fc^\per)'(u)[v]
	\end{align*}
	where we used $\abs{x_n} \to \infty$ and $\calG^\loc(x) \to 0, h^\loc(x) \to 0$ as $x \to \pm\infty$. This shows that $u \neq 0$ is a critical point of $\fc^\per$, and in particular $\fc^\per(u) \geq c_\gs^\per$ holds. However, for fixed $R > 0$ we have
	\begin{align*}
		&\sum_{k \in \regularK} \frac{1}{\omega^2 k^2 \F_k[\calN]} \int_{-R}^R \bigl(\abs{\F_k[u]'}^2 + \omega^2 k^2 V_k^\per(x) \abs{\F_k[u]}^2\bigr) \der x
		\\ &\leq \liminf_{n \to \infty} \sum_{k \in \regularK} \frac{1}{\omega^2 k^2 \F_k[\calN]} \int_{-R}^R \bigl(\abs{\F_k[\tilde u_n]'}^2 + \omega^2 k^2 V_k^\per(x) \abs{\F_k[\tilde u_n]}^2\bigr) \der x
		\\ &= \liminf_{n \to \infty} \sum_{k \in \regularK} \frac{1}{\omega^2 k^2 \F_k[\calN]} \int_{x_n-R}^{x_n+R} \bigl(\abs{\F_k[u_n]'}^2 + \omega^2 k^2 V_k^\per(x) \abs{\F_k[u_n]}^2\bigr) \der x
		\\ &= \liminf_{n \to \infty} \sum_{k \in \regularK} \frac{1}{\omega^2 k^2 \F_k[\calN]} \int_{x_n-R}^{x_n+R} \bigl(\abs{\F_k[u_n]'}^2 + \omega^2 k^2 V_k(x) \abs{\F_k[u_n]}^2\bigr) \der x
		\\ &\leq \liminf_{n \to \infty} \ip{u_n}{u_n}_\fcd
	\end{align*}
	from which $\ip{u}{u}_\fcd^\per \leq \liminf_{n\to\infty} \ip{u_n}{u_n}_\fcd$ follows in the limit $R\to \infty$. This implies
	\begin{align*}
		c 
		&< c_\gs^\per 
		\leq \fc^\per(u) 
		= \fc^\per(u) - \tfrac14 (\fc^\per)'(u)[u] 
		= \tfrac14 \ip{u}{u}_\fcd^\per
		\\ &\leq \tfrac14 \liminf_{n\to\infty} \ip{u_n}{u_n}_\fcd
		= \liminf_{n \to \infty} \fc(u_n) - \tfrac14 \fc'(u_n)[u_n] = c,
	\end{align*}
	a contradiction. 

	
	Thus we have shown the claim. By \cref{lem:Lp_embedding,lem:ps_bounded} up to a subsequence we have $u_n \wto u$ in $\fcd$ and $u_n \to u$ in $L^4_\loc$, where we now know $u \neq 0$. Applying \cref{lem:weakly_convergent_ps} completes the proof.
\end{proof}

\begin{proof}[Proof of \cref{thm:ground_state} for \ref{ass:nonlin:per+loc}]
	Assume first that $c_\gs < c_\gs^\per$ holds. 
	Let $u_n$ be a sequence of critical points of $\fc$ with $\fc(u_n) \to c_\gs$. From \cref{lem:positive_energy,lem:ps_convergence_part3} it follows that there exists a ground state of $\fc$. 
	In the general situation, we distinguish between two cases.
	
	\emph{Case 1:} If there exists a ground state $u^\per$ of $\fc^\per$ which also is a critical point of $\fc$ then clearly $c_\gs \leq c_\gs^\per$ holds. If $c_\gs < c_\gs^\per$ there is nothing left to show, and when $c_\gs = c_\gs^\per$ then  $u^\per$ is a ground state of $\fc$.

	\emph{Case 2:} If no ground state of $\fc^\per$ solves $\fc'(u) = 0$, then by \cref{lem:compare:energy} there exists a Palais-Smale sequence $u_n$ for $\fc$ at some level $c_\mopa \in (0, c_\gs^\per)$. Since $c_\gs\leq c_\mopa$ by Lemma~\ref{lem:ps_convergence_part3}, this shows $c_\gs \leq c_\mopa < c_\gs^\per$.
\end{proof}


\section{Regularity}
\label{sec:regularity}

So far we have shown existence of a ground state to \eqref{eq:effective_problem}. In this section, we discuss its regularity properties.

We proceed in two steps. First, we show regularity for the solution $u$ to \eqref{eq:effective_problem:small}: It is infinitely differentiable in time, twice differentiable in space, and derivatives lie in $L^2 \cap L^\infty$. We also show that if the material parameters are $l$ times continuously differentiable, then $u$ is $l+2$ time differentiable in space and derivatives lie in $L^2 \cap C_b$.

Then we transfer this regularity from the function $u$ to the electromagnetic fields $\bfD, \bfE, \bfB, \bfH$ since these can be expressed as functions of $u$.

We begin by showing infinite time differentiability in the space $\fcd$, see \cref{lem:regularity:time_derivative}, which we prepare with an auxiliary result.

\begin{lemma}\label{lem:fractional_triple_product}
	Let $s > 0$ and $u, \fracDT{s} u \in L^p(\R \times \T)$ where $p \in [3, \infty]$. Then $\fracDT{s}(u^3) \in L^{\nicefrac{p}{3}}(\R \times \T)$.
\end{lemma}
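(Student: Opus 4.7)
The operator $\fracDT{s}$ acts only in the time variable, so my plan is to apply a fractional Leibniz (Kato–Ponce) inequality on the circle $\T$ pointwise in $x$, and then collect the result by Hölder's inequality in $x$.

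By Fubini, $u(x, \cdot), \fracDT{s} u(x, \cdot) \in L^p(\T)$ for a.e.\ $x \in \R$. For such $x$, I would invoke the Kato–Ponce fractional Leibniz rule on $\T$ to obtain
\begin{align*}
	\norm{\fracDT{s}(u^3)(x,\cdot)}_{L^{p/3}(\T)} \lesssim \norm{\fracDT{s}u(x,\cdot)}_{L^p(\T)} \norm{u(x,\cdot)}_{L^p(\T)}^2,
\end{align*}
by applying the bilinear version twice, first splitting $u^3 = u \cdot u^2$ (Hölder pair $(p, p/2)$) and then $u^2 = u \cdot u$ (pair $(p, p)$). The hypothesis $p \geq 3$ is needed precisely to ensure $p/3 \geq 1$.

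For $p = \infty$ the displayed estimate is already the desired bound after taking the essential supremum in $x$. For $p \in [3, \infty)$ I would raise the pointwise estimate to the power $p/3$, integrate in $x$, and apply Hölder's inequality with exponents $3$ and $\tfrac{3}{2}$ on the right-hand side to arrive at
\begin{align*}
	\norm{\fracDT{s}(u^3)}_{L^{p/3}(\R \times \T)} \lesssim \norm{\fracDT{s} u}_{L^p(\R \times \T)} \norm{u}_{L^p(\R \times \T)}^2 < \infty,
\end{align*}
which is strictly stronger than the claimed $L^{p/3}$-membership.

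The main obstacle is locating a clean reference for the Kato–Ponce inequality on the torus $\T$ covering the full range $p \in [3, \infty]$, in particular the endpoint $p = \infty$ where the classical $\R^n$-formulation fails directly. If such a reference is inconvenient, the bound can still be obtained by a discrete paraproduct decomposition on the Fourier side based on the elementary estimate $\abs{k}^s \lesssim \max_i \abs{k_i}^s$ whenever $k = k_1 + k_2 + k_3$. This reduces the problem to three symmetric trilinear terms in which the derivative has been moved onto the factor of largest frequency, after which the Hölder step in $x$ is carried out exactly as above.
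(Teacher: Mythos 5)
Your proof matches the paper's almost exactly: the paper likewise invokes the fractional Leibniz rule on $\T$, citing \cite[Proposition~1]{fractional_leibniz} (stated there for all exponents in $[1,\infty]$, so the $p=\infty$ endpoint you flag is covered by the reference), applies it twice to $u^3 = u\cdot u^2$ and then $u^2 = u\cdot u$, and concludes with H\"older's inequality in $x$. The paraproduct fallback you sketch is a valid alternative but is not needed given the cited result.
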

\begin{proof}
	By \cite[Proposition~1]{fractional_leibniz} the estimate
	\begin{align*}
		\norm{\fracDT{s} vw}_{L^r(\T)}
		\lesssim \norm{\fracDT{s} v}_{L^{p_1}(\T)} \norm{w}_{L^{q_1}(\T)}
		+ \norm{v}_{L^{p_2}(\T)} \norm{\fracDT{s} w}_{L^{q_2}(\T)}
	\end{align*}
	holds for all $r, p_j, q_j \in [1, \infty]$ with $\frac{1}{r} = \frac{1}{p_j} + \frac{1}{q_j}$ and $v \in C^\infty(\T)$. By a density argument we obtain 
	\begin{align*}
		\norm{\fracDT{s} (u^3)}_{L^{\nicefrac{p}{3}}(\R \times \T)}
		&= \Norm{\norm{\fracDT{s} (u^3)}_{L^{\nicefrac{p}{3}}(\T)}}_{L^{\nicefrac{p}{3}}(\R)}
		\\ &\lesssim \Norm{
			\norm{\fracDT{s} u}_{L^p(\T)} 
			\norm{u^2}_{L^{\nicefrac{p}{2}}(\T)}
			+ \norm{u}_{L^p(\T)}
			\norm{\fracDT{s} (u^2)}_{L^{\nicefrac{p}{2}}(\T)}
		}_{L^{\nicefrac{p}{3}}(\R)}
		\\ &\lesssim \Norm{
			\norm{\fracDT{s} u}_{L^p(\T)} 
			\norm{u}_{L^p(\T)}^2
		}_{L^{\nicefrac{p}{3}}(\R)}
		\\ &\leq \Norm{\norm{\fracDT{s} u}_{L^{p}(\T)}}_{L^{p}(\R)} \Norm{\norm{u}_{L^{p}(\T)}}_{L^{p}(\R)}^2.
		\qedhere
	\end{align*}
\end{proof}

\begin{lemma}\label{lem:regularity:time_derivative}
	Let $u \in \fcd$ be a critical point of $\fc$. Then $\fracDT{s} u \in \fcd$ for all $s \in \R$.
\end{lemma}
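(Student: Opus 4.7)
The idea is to test the critical-point identity $\ip{u}{v}_\fcd = \int_{\R\times\T} h u^3 v\der(x,t)$ with a suitable truncated fractional time-derivative of $u$ itself, combine with the fractional Leibniz-type estimate of \cref{lem:fractional_triple_product}, and then close a bootstrap in the parameter $s$. The case $s\leq 0$ is trivial: since $\hat u_0=0$ and $|k|\geq 1$ on $\regularK$, the weight $|k\omega|^{2s}$ is uniformly bounded by $\omega^{2s}$, so $\norm{\fracDT{s} u}_\fcd \lesssim \norm{u}_\fcd$. I therefore focus on $s>0$ from now on.

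For fixed $s>0$ and $K\in\N$, introduce the time-frequency cut-off $P_K v \coloneqq \F_t^{-1}[\bbone_{|k|\leq K}\F_k[v]]$ and take as test function $v_K \coloneqq \fracDT{2s} P_K u$. The Fourier multiplier $\bbone_{|k|\leq K}|k\omega|^{2s}$ is real and even in $k$, so $v_K$ is real-valued; moreover $v_K$ lies in $\fcd$ because only finitely many Fourier modes are involved and each satisfies $\hat u_k \in H^1(\R)$ by definition of $\fcd$. Inserting $v_K$ into the critical-point identity, the left-hand side evaluates (via the equivalent $V_k$-inner product of \cref{def:slab:functional_domain}) to $\norm{\fracDT{s} P_K u}_\fcd^2$. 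Using the $L^2$-self-adjointness of $\fracDT{s}$ together with the fact that $h=h(x)$ commutes with time-differentiation, the right-hand side equals $\int h\, \fracDT{s}(u^3)\, \fracDT{s} P_K u\, \der(x,t)$. Hölder's inequality with exponents $(4/3,4)$, \cref{lem:fractional_triple_product} applied with $p=4$, and the embedding $\fcd\embeds L^4$ of \cref{lem:Lp_embedding} then yield
\begin{align*}
    \norm{\fracDT{s} P_K u}_\fcd^2 \lesssim \norm{h}_\infty \norm{\fracDT{s} u}_{L^4} \norm{u}_{L^4}^2 \norm{\fracDT{s} P_K u}_\fcd.
\end{align*}
Dividing by $\norm{\fracDT{s} P_K u}_\fcd$ and passing to the limit $K\to\infty$ (the resulting bound is monotone in $K$) shows $\fracDT{s} u \in \fcd$, \emph{provided} that $\fracDT{s} u \in L^4$ is already known.

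To supply this $L^4$-hypothesis I bootstrap. Let $\eps_0>0$ be small enough that \cref{cor:Lp_embedding:derivatives} provides a bounded operator $\fracDT{\eps_0}\colon \fcd \to L^4(\R\times\T)$. Starting from $u\in\fcd$, the corollary gives $\fracDT{\eps_0} u \in L^4$, and the estimate above then promotes this to $\fracDT{\eps_0} u \in \fcd$. Applying the corollary to $\fracDT{\eps_0} u$ yields $\fracDT{2\eps_0} u \in L^4$, and the estimate again gives $\fracDT{2\eps_0} u \in \fcd$. By induction $\fracDT{n\eps_0} u \in \fcd$ for every $n\in\N$; for an arbitrary $s>0$ I pick $n$ with $n\eps_0\geq s$ and reduce to the already-treated case of non-positive exponent via $\fracDT{s} u = \fracDT{s-n\eps_0}(\fracDT{n\eps_0} u)$.

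The delicate point is that the whole scheme works precisely because the cubic nonlinearity matches the $L^4$-endpoint of the embedding $\fcd\embeds L^p$: Hölder forces $\fracDT{s}(u^3)$ into $L^{4/3}$, which is exactly what \cref{lem:fractional_triple_product} produces from $L^4$-control of $\fracDT{s} u$ (the lemma's admissible range being $p\geq 3$), and the remaining factor $\norm{\fracDT{s} P_K u}_{L^4}$ is then absorbed into $\norm{\fracDT{s} P_K u}_\fcd$ via the embedding. The truncation $P_K$ is purely a regularisation device ensuring that all manipulations take place on genuine elements of $\fcd$ before the final monotone passage to the limit.
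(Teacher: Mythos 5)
Your proposal is correct, and it follows the same overall strategy as the paper's proof: the trivial case $s\le 0$, a bounded operator $\fracDT{\eps_0}\colon\fcd\to L^4$ from \cref{cor:Lp_embedding:derivatives}, the fractional Leibniz estimate of \cref{lem:fractional_triple_product} applied with $p=4$ to control $\fracDT{s}(u^3)$ in $L^{4/3}$, and an induction in multiples of $\eps_0$ to reach arbitrary positive $s$. The implementation of the inductive step is where you diverge from the paper. The paper tests $\fc'(u)$ against $\fracDT{(n+1)\eps}v$ for $v$ in a dense subclass, recognises the right-hand side as a bounded linear functional on $\fcd$, and then invokes the Riesz representation theorem together with \cref{lem:dense_subset} to identify the representing vector with $\fracDT{(n+1)\eps}u$. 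You instead test directly against the single function $v_K=\fracDT{2s}P_K u$, which is legitimately in $\fcd$ thanks to the frequency cut-off, and pass to the limit $K\to\infty$ using monotonicity of $K\mapsto\norm{\fracDT{s}P_K u}_\fcd$. Your version avoids the Riesz argument and has the minor advantage of producing an explicit a priori bound $\norm{\fracDT{s}u}_\fcd\lesssim\norm{h}_\infty\norm{\fracDT{s}u}_4\norm{u}_4^2$; the paper's version is a touch more abstract but dispenses with the cut-off. One small imprecision: the critical-point identity uses the $V_k$-weighted inner product $\ip{\cdot}{\cdot}_\fcd$, so the left-hand side is $\ip{\fracDT{s}P_Ku}{\fracDT{s}P_Ku}_\fcd$ rather than $\norm{\fracDT{s}P_Ku}_\fcd^2=\iip{\fracDT{s}P_Ku}{\fracDT{s}P_Ku}_\fcd$ exactly; since $V_k$ is bounded above and below by positive constants this costs only a fixed factor and does not affect the argument, but it should be stated as an equivalence ($\gtrsim$) rather than an equality.
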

\begin{proof}
	Since $u \in \fcd$, $\fracDT{s} u \in \fcd$ holds for $s \leq 0$. Moreover, if $\fracDT{s} u \in \fcd$ then $\fracDT{\sigma} u \in \fcd$ for all $\sigma\leq s$. By \cref{cor:Lp_embedding:derivatives} there exists $\eps > 0$ such that $\fracDT{\eps} \colon \fcd \to L^4(\R \times \T)$ is bounded.
	We show by induction that $\fracDT{n \eps} u \in \fcd$ holds for $n \in \N_0$. So assume $\fracDT{n \eps} u \in \fcd$ for fixed $n \in \N_0$. 
	Let $v \in \fcd$ with $\fracDT{(n+1)\eps} v \in \fcd$. Then we have
	\begin{align*}
		0 &= \fc'(u)[\fracDT{(n+1)\eps} v]
		\\ &= \ip{u}{\fracDT{(n+1)\eps} v}_\fcd - \int_{\R \times \T} h(x) u^3 \cdot \fracDT{(n+1)\eps} v \der (x, t)
		\\ &= \ip{\fracDT{n\eps}u}{\fracDT{\eps} v}_\fcd - \int_{\R \times \T} h(x) \fracDT{n \eps}(u^3) \cdot \fracDT{\eps} v \der (x, t).
	\end{align*}
	By \cref{lem:fractional_triple_product} with $p=4$ and a density argument (cf. \cref{lem:dense_subset}) we see that the map $v \mapsto \int_{\R \times \T} h(x) \fracDT{n \eps}(u^3) \cdot \fracDT{\eps} v \der (x, t)$ extends to a bounded linear functional on $\fcd$. Hence there exists $w \in \fcd$ with
	\begin{align*}
		\ip{w}{v}_\fcd
		= \int_{\R \times \T} h(x) \fracDT{n \eps}(u^3) \cdot \fracDT{\eps} v \der (x, t) 
		= \ip{\fracDT{n\eps}u}{\fracDT{\eps} v}_\fcd
	\end{align*}
	for $v \in \fcd$ with $\fracDT{(n+1)\eps}v \in \fcd$. Again by density we get $\fracDT{(n+1)\eps} u = w$.
\end{proof}

In order to proceed we need the following little result on the mapping properties of Fourier multiplier operators.
\begin{lemma}\label{lem:multiplier_invariance}
	Let $M v = \calF^{-1}[m_k \hat v_k]$ be a Fourier multiplier with symbol $\abs{m_k} \lesssim \abs{k}^\sigma$ of polynomial growth and let $u$ be a function with $\fracDT{s} u \in \fcd$ for all $s \in \R$. Then $\fracDT{s} M u \in \fcd$ for all $s \in \R$. The same holds for $\fcd$ replaced by $L^p(\R \times \T)$ with $p \in [1, \infty]$ if we require $\hat u_0=0$.
\end{lemma}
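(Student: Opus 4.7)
The proof proceeds by direct Fourier-side estimates. The two cases require slightly different strategies since comparison of Fourier coefficients is only isometric in $L^2$-based spaces like $\fcd$, while for $L^p$ one has to reduce everything to convolution.

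For the $\fcd$ case, my plan is to use the explicit Fourier representation of the norm. Since $\F_k[\fracDT{s} M u] = \abs{\omega k}^s m_k \hat u_k$, plugging into \cref{def:slab:functional_domain} gives
\begin{align*}
\norm{\fracDT{s} M u}_\fcd^2
= \sum_{k \in \regularK} \frac{\abs{\omega k}^{2s} \abs{m_k}^2}{\omega^2 k^2 \F_k[\calN]} \int_\R \bigl(\abs{\hat u_k'}^2 + \omega^2 k^2 \abs{\hat u_k}^2 \bigr) \der x.
\end{align*}
The polynomial growth assumption $\abs{m_k} \lesssim \abs{k}^\sigma$ bounds this by a constant multiple of $\norm{\fracDT{s + \sigma} u}_\fcd^2$, which is finite by hypothesis. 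This delivers $\fracDT{s} M u \in \fcd$.

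For the $L^p$ case the obstacle is that $L^p$-boundedness of a generic Fourier multiplier is nontrivial for $p \neq 2$. The plan is to trade off derivative regularity (available by hypothesis for all orders) against absolute summability of the multiplier symbol, reducing the question to Young's inequality on the compact torus $\T$. More precisely, I would pick $N \in \N$ with $N > \sigma + 1$ and set $\tilde m_k \coloneqq \abs{\omega k}^{-N} m_k$ for $k \neq 0$ and $\tilde m_0 \coloneqq 0$, so that $\sum_{k \in \Z} \abs{\tilde m_k} < \infty$. Define $\tilde M v \coloneqq \calF^{-1}[\tilde m_k \hat v_k]$; the kernel $K \coloneqq \sum_k \tilde m_k e_k$ belongs to $C(\T) \subseteq L^1(\T)$, so $\tilde M$ acts on $L^p(\R \times \T)$ by convolution in $t$ with $K$, and Young's inequality yields $\norm{\tilde M v}_{L^p(\R\times\T)} \leq \norm{K}_{L^1(\T)} \norm{v}_{L^p(\R\times\T)}$.

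Finally, the hypothesis $\hat u_0 = 0$ propagates to all time-differentiated expressions, so both sides of the identity $\F_k[\fracDT{s} M u] = \tilde m_k \cdot \F_k[\fracDT{s+N} u]$ vanish at $k = 0$, and the identity $\fracDT{s} M u = \tilde M \fracDT{s+N} u$ holds at the level of tempered distributions (and hence pointwise a.e. once we know the right-hand side lies in $L^p$). Applying the assumption $\fracDT{s+N} u \in L^p(\R \times \T)$ together with the boundedness of $\tilde M$ on $L^p$, we conclude $\fracDT{s} M u \in L^p(\R \times \T)$, which is the desired statement. The only real subtlety is the bookkeeping around the zero mode; the assumption $\hat u_0 = 0$ is precisely what makes the convolutional factorization legitimate.
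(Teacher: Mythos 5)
Your proof is correct and mirrors the paper's argument: the $\fcd$ bound is obtained by direct Fourier-side estimation, and the $L^p$ case is reduced to Young's inequality after trading a power of $\partial_t$ for summability of the kernel. The only cosmetic difference is that you take $N>\sigma+1$ to get absolute summability of $\tilde m_k$, whereas the paper uses exactly $\sigma+1$ and then observes that $\ell^2$-summability of $m_k\abs{\omega k}^{-\sigma-1}$ together with $L^2(\T)\embeds L^1(\T)$ yields the kernel in $L^1(\T)$; both routes deliver the same Young-inequality estimate.
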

\begin{proof}
    In the Hilbert space setting we have
	\begin{align*}
		\norm{\fracDT{s} M u}_\fcd \leq (\sup_{k \in \regularK} \abs{m_k} \abs{\omega k}^{-\sigma}) \norm{\fracDT{s + \sigma} u}_\fcd < \infty
	\end{align*}
	In the $L^p(\R \times \T)$ case, the series $\mu(t) = \sum_{k \in \Z\setminus\{0\}} m_k \abs{\omega k}^{-\sigma-1} e_k(t)$ converges in $L^1(\T)$. Hence
	\begin{align*}
		\norm{\fracDT{s} M u}_p 
		= \norm{\mu \ast \fracDT{s+\sigma+1} u}_p
		\leq \norm{\mu}_1 \norm{\fracDT{s+\sigma+1} u}_p 
		< \infty.
		&\qedhere
	\end{align*}
\end{proof}
Note that \cref{lem:multiplier_invariance} applies to the multipliers $\calN \ast, \calG(x) \ast$ and by \ref{ass:nonlin:decay} also to $(\calN\ast)^{-1}$.

Continuing our regularity analysis we show that $u$ and its derivatives lie in $L^2 \cap L^\infty$. This also shows that $u$ satisfies \eqref{eq:effective_problem} strongly.

\begin{proposition}\label{prop:regularity:W2p}
	Let $u \in \fcd$ be a critical point of $\fc$. Then $\fracDT{s} u \in W^{2,p}(\R \times \T)$ for all $s \in \R$ and $p \in [2, \infty]$, and it satisfies the equation
	\begin{align}\label{eq:differentiated_u_problem}
		- u_{xx} - V(x) \partial_t^2 u + h(x) \partial_t^2\left(\calN \ast u^3\right) = 0.
	\end{align}
	If \ref{ass:regularity} holds, we moreover have $\fracDT{s} u \in W^{2+l,2}(\R \times \T) \cap C_b^{2+l}(\R \times \T)$. 
\end{proposition}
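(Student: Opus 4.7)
The strategy is to combine the infinite time-regularity already granted by \cref{lem:regularity:time_derivative} with mode-by-mode bootstrapping of the ODE in $x$ that each Fourier coefficient $\hat u_k$ satisfies.

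First, I would derive the strong equation \eqref{eq:differentiated_u_problem} from the weak formulation. Testing $\fc'(u) = 0$ against $v(x,t) = \varphi(x) e_k(t)$ with $\varphi \in C_c^\infty(\R)$ and $k \in \regularK$, and using that $\hat v_j \equiv 0$ for $j \neq \pm k$, gives that for every $k \in \regularK$ the Fourier coefficient $\hat u_k \in H^1(\R)$ is a weak solution of the 1D ODE
\begin{align}\label{eq:plan:fourier_ode}
	-\hat u_k'' + \omega^2 k^2 V_k(x) \hat u_k = \omega^2 k^2 h(x) \F_k[\calN] \widehat{u^3}_k \text{ in } \R,
\end{align}
where $V_k(x) = \tfrac{1}{c^2} - 1 - \F_k[\calG(x)]$. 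Multiplying by $-\omega^2 k^2 \F_k[\calN]$ and summing over $k \in \regularK$ (using $\projS u = 0$) formally yields \eqref{eq:differentiated_u_problem}; the legality of the summation is justified once the $L^p$ bounds below are in place.

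Second, I would run an $L^p$-bootstrap. By \cref{lem:regularity:time_derivative} we have $\fracDT{s} u \in \fcd$ for all $s \in \R$, so by \cref{lem:Lp_embedding} and \cref{cor:Lp_embedding:derivatives}, $\fracDT{s} u \in L^p(\R \times \T)$ for every $s \in \R$ and every $p \in [2, p^\star)$ with $p^\star > 4$. \Cref{lem:fractional_triple_product} then yields $\fracDT{s}(u^3) \in L^{p/3}(\R \times \T)$, in particular $\widehat{u^3}_k \in L^2(\R)$ for each $k$. Feeding this back into \eqref{eq:plan:fourier_ode}, together with $\hat u_k \in L^2(\R)$ and boundedness of $V_k, h$, gives $\hat u_k'' \in L^2(\R)$, hence $\hat u_k \in H^2(\R) \hookrightarrow C_b^1(\R)$. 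Iterating with the higher $L^p$ information in the right-hand side of \eqref{eq:plan:fourier_ode} upgrades this to $\hat u_k \in W^{2,p}(\R)$ for every finite $p$.

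Third, I need $L^\infty$ and, more generally, $W^{2,p}$-bounds on $\R \times \T$ for $p = \infty$. Since \cref{lem:Lp_embedding} does not reach $p = \infty$, I would exploit the infinite time-regularity by summing Fourier coefficients in $t$. From $\fracDT{s} u \in \fcd$ and the definition of the $\fcd$-inner product, one has the weighted bound
\begin{align*}
	\norm{\hat u_k'}_{L^2(\R)}^2 + \omega^2 k^2 \norm{\hat u_k}_{L^2(\R)}^2 \lesssim \omega^2 k^2 \F_k[\calN] \cdot \abs{\omega k}^{-2s} \norm{\fracDT{s} u}_\fcd^2
\end{align*}
for every $s \in \R$. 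A 1D Gagliardo–Nirenberg estimate gives $\norm{\hat u_k}_{L^\infty(\R)} \lesssim \norm{\hat u_k}_{L^2(\R)}^{1/2} \norm{\hat u_k'}_{L^2(\R)}^{1/2}$, so that choosing $s$ sufficiently large and invoking the decay $\F_k[\calN] \lesssim \abs{k}^{-\alpha}$ from (A5) makes $\sum_{k \in \regularK} \norm{\hat u_k}_{L^\infty(\R)}$ finite, and similarly for $\partial_x \hat u_k, \partial_x^2 \hat u_k$ (using \eqref{eq:plan:fourier_ode} for the second derivative). This gives $\fracDT{s} u \in W^{2,\infty}(\R \times \T)$ for all $s$, and interpolation with the $L^2$-statement covers all $p \in [2, \infty]$.

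Finally, for the claim under \ref{ass:regularity}, I would differentiate \eqref{eq:plan:fourier_ode} in $x$ and induct on the number of derivatives: if $g \in C_b^l$ and $h \in C_b^l$, then $V_k \in C_b^l$ uniformly in $k$, so each extra $x$-derivative only introduces bounded coefficients times quantities already controlled. The continuity part $C_b^{2+l}$ then follows since $W^{2+l,2} \cap W^{2+l,\infty}$ together with uniform control on $\fracDT{s} u$ embeds into $C_b^{2+l}$ on $\R\times\T$ (viewed as functions of $(x,t)$, via 2D Morrey-type embedding applied to cylinders $[y, y+1]$).

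\textbf{Main obstacle.} The genuine difficulty is the $L^\infty$ estimate, since none of the embeddings of \cref{lem:Lp_embedding} attain $p = \infty$. Overcoming this is what forces us to use the full strength of \cref{lem:regularity:time_derivative} (infinite time-regularity) to produce arbitrarily fast decay of the Fourier coefficients $\hat u_k$ in $k$, so that pointwise estimates can be obtained by absolutely convergent Fourier series. The quantitative matching between the weighted $H^1$-bounds on $\hat u_k$ and the decay rate $\abs{k}^{-\alpha}$ of $\F_k[\calN]$ from (A5) is precisely what makes this possible.
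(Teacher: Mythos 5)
Your proposal is correct and reaches the same conclusion, but via a genuinely different route in two of the three steps. The paper derives the strong equation \eqref{eq:differentiated_u_problem} holistically by testing $\fc'(u)=0$ against $\fracDT{s+2}\calN\ast v$ (so that the factor $(-\partial_t^2\calN\ast)^{-1}$ in \eqref{eq:effective_problem:small} is undone inside the pairing), and it obtains the $L^\infty$ bound from the pointwise Fourier-inversion estimate
$\norm{\fracDT{s}u}_\infty \lesssim \norm{\abs{\omega k}^s\F_{\xi,k}[u]}_1$
together with a Cauchy--Schwarz split, where the first factor is finite precisely because $\F_k[\calN]$ is summable ($\alpha>1$). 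You instead extract the mode-by-mode ODE \eqref{eq:plan:fourier_ode} for each $\hat u_k$ by testing with $\varphi(x)e_k(t)$, and you get $L^\infty$ via the 1D Gagliardo--Nirenberg inequality applied to each $\hat u_k$, followed by absolute summability of the resulting Fourier series in $k$. Both routes rely on exactly the same quantitative ingredients --- \cref{lem:regularity:time_derivative} to make $s$ arbitrarily large, \cref{lem:fractional_triple_product} to control $u^3$, and the decay hypothesis on $\F_k[\calN]$ from (A5) --- so the difference is stylistic rather than in depth, but your mode-by-mode bootstrap is more elementary (it avoids the joint $(\xi,k)$-estimate and the explicit use of \cref{lem:multiplier_invariance}, which get absorbed coefficient-wise). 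Your identification of the $L^\infty$ bound as the main obstacle and of infinite time-regularity as the key to overcome it is exactly right. One small inaccuracy: to reassemble \eqref{eq:plan:fourier_ode} into \eqref{eq:differentiated_u_problem} you do \emph{not} multiply by $-\omega^2k^2\F_k[\calN]$; you simply note that \eqref{eq:plan:fourier_ode} already \emph{is} the $k$-th Fourier coefficient of \eqref{eq:differentiated_u_problem} (and for $k\in\singularK$ both sides vanish since $\hat u_k=0$ and $\F_k[\calN]=0$), so it suffices to multiply by $e_k(t)$ and sum. This is a slip, not a gap, and the rest of the argument is unaffected.
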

\begin{proof}
	We remark that equation \eqref{eq:differentiated_u_problem} formally follows by applying $-\partial_t^2 \calN \ast$ to \eqref{eq:effective_problem}.

	\textit{Part 1:} We first show $\fracDT{s} u \in L^p(\R \times \T)$.
	Because of boundedness of the embedding $\fcd \embeds L^2(\R\times\T)$ and interpolation, it suffices to give the result for $p = \infty$. Similarly as in the proof of \cref{lem:Lp_embedding} we calculate
	\begin{align*}
		\norm{\fracDT{s} u}_\infty
		\lesssim \norm{\abs{\omega k}^s \F_{\xi,k}[u]}_{1}
		\leq \norm{\abs{\omega k}^{-\nicefrac12} \sqrt{\frac{\omega^2 k^2 \F_k[\calN]}{\xi^2 + \omega^2 k^2}}}_2 
		\cdot \norm{\abs{\omega k}^{s + \nicefrac12} \sqrt{\frac{\xi^2 + \omega^2 k^2}{\omega^2 k^2 \F_k[\calN]}} \F_{\xi, k}[u]}_2
	\end{align*}
	where the first term is finite since $0 \leq \F_k[\calN] \leq \abs{k}^{-\alpha}$, $\alpha > 1$, and the second term is equivalent to $\norm{\fracDT{s + \nicefrac12} u}_\fcd$ and thus finite by \cref{lem:regularity:time_derivative}.

	\textit{Part 2:}
	Next we show $\fracDT{s} u_{x} \in L^2(\R \times \T)$:
	\begin{align*}
		\norm{\fracDT{s} u_x}_2^2
		&= \sum_{k \in \regularK} \int_\R \abs{\omega k}^{2s} \abs{\hat u_k'}^2 \der x
		\\ &\lesssim \sum_{k \in \regularK} \int_\R \frac{\abs{\omega k}^{2s + 2 - \alpha}}{\omega^2 k^2 \F_k[\calN]} \abs{\hat u_k'}^2 \der x
		\leq \norm{\fracDT{s + 1 - \nicefrac{\alpha}{2}} u}_\fcd^2 
		< \infty.
	\end{align*}

	Now let $v \in \fcd$ with $\fracDT{s+2} \calN \ast v \in \fcd$. As $\calN$ is even by \ref{ass:nonlin:decay}, we have
	\begin{align*}
		0 
		&= \fc'(u)[\fracDT{s+2} \calN \ast v]
		\\ &= \sum_{k \in \regularK} \abs{\omega k}^s \int_\R \hat u_k' \overline{\hat v_k'} + \omega^2 k^2 V_k(x) \hat u_k \overline{\hat v_k} \der x
		- \int_{\R \times \T} h(x) u^3 \cdot \fracDT{s+2} \calN \ast v \der (x, t)
		\\ &= \int_{\R \times \T} \fracDT{s} u_x \cdot v_x \der (x, t)
		+ \int_{\R \times \T} \fracDT{2+s}\left( V(x) u - h(x) \calN \ast u^3 \right) \cdot v \der (x, t).
	\end{align*}
	Since $v$ was arbitrary, by density it follows that 
	\begin{align}\label{eq:loc:fc_prime_pde}
		\fracDT{s} u_{xx} = \fracDT{2+s}\left( V(x) u - h(x) \projR[\calN \ast u^3] \right)
	\end{align}
	holds. 
	The term on the right-hand side lies in $L^p(\R \times \T)$ by  \cref{lem:fractional_triple_product,lem:multiplier_invariance} and the first part of the proof. Thus, $\fracDT{s} u_{xx} \in L^p(\R \times \T)$ and $\fracDT{s} u \in W^{2, p}(\R \times \T)$.

	\textit{Part 3:} Assume \ref{ass:regularity}, i.e. $\calG \in C_b^l(\R; \calM(\T))$, $h \in C_b^l(\R)$.
	First, we have $\fracDT{s} u \in C_b(\R \times \T)$ by Part~1 and Sobolev's embedding. Continuity of $\calG, h$ shows that the right-hand side of \eqref{eq:loc:fc_prime_pde} is continuous, so $\fracDT{s} u_{xx} \in C_b(\R \times \T)$ holds, and in particular $\fracDT{s} u \in C_b^2(\R \times \T)$. 

	For $l > 0$ we argue by induction over $k=0,\ldots, l$. We use that by \eqref{eq:loc:fc_prime_pde} we have
	\begin{align*}
		\partial_x^{k+2} \fracDT{s} u = \partial_x^k \fracDT{s+2} \left(V(x) u - h(x) \calN \ast u^3\right)
	\end{align*}
	where the right-hand side lies in $L^2(\R \times \T) \cap C_b(\R \times \T)$ by the product rule and the induction hypothesis. This allows us to conclude $\fracDT{s} u \in W^{2+k,2}(\R \times \T) \cap C_b^{k+2}(\R \times \T)$.
\end{proof}

Recall for the first type of nonlinearity \eqref{eq:material:noni} that the profile $w$ of the electric field is given by $w = u$. For the second type of nonlinearity \eqref{eq:material:nonii}, by \eqref{eq:w_reconstruction:ii} the profile satisfies $\projR[w] = (\calN\ast)^{-1} u$ where $\projS[w]$ solves a differential equation. Therefore we need to discuss next the regularity of $w$ for the second type of nonlinearity, which is done in the following analogue to \cref{prop:regularity:W2p}.

\begin{proposition}\label{prop:regularity:w}	
	Let $u \in \fcd$ be a critical point of $\fc$. Define $w = w_1 + w_2$ where
	\begin{align*}
		w_1 = \projR[w] = (\calN\ast)^{-1} u,
		\quad
		w_2 = \projS[w] = (-\partial_x^2 - V(x) \partial_t^2)^{-1} (h(x) \partial_t^2 \projS[u^3]).
	\end{align*}
	Then $w$ satisfies $\fracDT{s} w \in W^{2,p}(\R \times \T)$ for all $s \in \R$, $p \in [2, \infty]$ and solves
	\begin{align*}
		(- \partial_x^2 - V(x) \partial_t^2) w + h(x) \partial_t^2 (\calN \ast w)^3 = 0.
	\end{align*}
	If \ref{ass:regularity} holds, we moreover have $\fracDT{s} w \in W^{2+l,2}(\R \times \T) \cap C_b^{2+l}(\R \times \T)$.
\end{proposition}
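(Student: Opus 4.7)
The plan is to establish regularity for $w_1$ and $w_2$ separately and then verify the stated PDE by projecting onto $\regularK$ and $\singularK$.

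\emph{Regularity of $w_1$.} The operator $(\calN\ast)^{-1}$ is a Fourier multiplier in $t$ whose symbol $1/\F_k[\calN]$ grows at most like $\abs{k}^\beta$ on $\regularK$ by \ref{ass:nonlin:decay} (and is zero on $\singularK$), so it commutes with $\partial_x$ and with all $\fracDT{s}$. Hence $\partial_x^j \fracDT{s} w_1 = (\calN\ast)^{-1}(\partial_x^j \fracDT{s} u)$ for $j = 0, 1, 2$. Since by \cref{prop:regularity:W2p} every $\partial_x^j \fracDT{s'} u$ lies in $L^p(\R \times \T)$ for all $s' \in \R$ and $p \in [2, \infty]$, \cref{lem:multiplier_invariance} applied to $(\calN\ast)^{-1}$ yields $\fracDT{s} w_1 \in W^{2,p}(\R \times \T)$. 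The same scheme, with $L^2$ and $C_b$ in place of $L^p$, gives the improved regularity under \ref{ass:regularity}.

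\emph{Regularity of $w_2$.} The operator $L \coloneqq -\partial_x^2 - V(x)\partial_t^2$ is diagonal in the time-Fourier variable, so on $\projS\fcd$ it decomposes into a family of 1D Schrödinger-type operators $L_k \coloneqq -\partial_x^2 + \omega^2 k^2 V_k(x)$ for $k \in \singularK\setminus\set0$; the mode $k = 0$ is annihilated by $\partial_t^2$. By \ref{ass:upper:bound} we have $V_k(x) \geq \eps > 0$ uniformly, so each $L_k$ is positive definite on $L^2(\R)$ and its Green's function decays like $\exp(-\omega \abs{k}\sqrt{\eps}\abs{x-y})$. The source term $h\partial_t^2\projS[u^3]$ has time-Fourier coefficients that decay faster than any polynomial in $k$ by \cref{lem:regularity:time_derivative}, \cref{lem:fractional_triple_product} and boundedness of $h$. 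This superpolynomial decay dominates the polynomial growth of the operator norms of $L_k^{-1}$ in the relevant topologies, so summing the mode-wise elliptic estimates yields $\fracDT{s} w_2 \in W^{2,p}(\R \times \T)$ for all $s \in \R$, $p \in [2, \infty]$, with an analogous conclusion in $W^{2+l,2} \cap C_b^{2+l}$ under \ref{ass:regularity}.

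\emph{PDE verification.} Applying $(\calN\ast)^{-1}$ to both sides of \eqref{eq:differentiated_u_problem} is legitimate since this multiplier commutes with multiplication by $h(x)$ and with $V(x)\cdot$ (a superposition of multiplication in $x$ and convolution in $t$), and since $(\calN\ast)^{-1}(\calN\ast u^3) = \projR[u^3]$. This produces $L w_1 + h(x)\partial_t^2\projR[u^3] = 0$. Combining with the projected equation $L w_2 + h(x)\partial_t^2\projS[u^3] = 0$ defining $w_2$ yields $L w + h(x)\partial_t^2 u^3 = 0$. Because $\calN \ast w_2 = 0$, we have $\calN \ast w = \calN \ast w_1 = u$, so $u^3 = (\calN \ast w)^3$ and the stated PDE follows.

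\emph{Main obstacle.} The delicate step is the regularity of $w_2$: one must piece together 1D elliptic estimates across all $k \in \singularK$, with operator norms of $L_k^{-1}$ growing polynomially in $\abs{k}$. This growth is tamed by the rapid decay of the time-Fourier coefficients of $u^3$, which ultimately comes from \cref{lem:regularity:time_derivative}, but the bookkeeping needed to control sums in both the $L^p$- and $C_b^{2+l}$-topologies (and to handle the product with the merely bounded coefficient $h$ in the $C_b^{2+l}$ case under \ref{ass:regularity}) requires some care.
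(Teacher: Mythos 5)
Your treatment of $w_1$ (via \cref{lem:multiplier_invariance} and \cref{prop:regularity:W2p}) and the final PDE verification (applying $(\calN\ast)^{-1}$ to \eqref{eq:differentiated_u_problem}, combining with the defining equation for $w_2$, and using $\calN \ast w = u$) coincide with the paper's argument. For $w_2$, however, you take a genuinely different route. The paper introduces the Hilbert space $H_2$, obtains $w_2$ from the Riesz representation theorem, bootstraps time regularity as in \cref{lem:regularity:time_derivative}, and then reads off spatial regularity from the pointwise equation using the fractional Leibniz rule of \cref{lem:fractional_triple_product} — mirroring the treatment of $u$ in \cref{prop:regularity:W2p}. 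You instead diagonalize the operator $L = -\partial_x^2 - V(x)\partial_t^2$ over the time-Fourier modes, obtain coercive one-dimensional operators $L_k = -\partial_x^2 + \omega^2 k^2 V_k(x)$ for $k \in \singularK\setminus\set0$, and sum mode-wise elliptic estimates against the superpolynomial decay of the Fourier coefficients of the source $h\partial_t^2\projS[u^3]$. This is a valid and arguably more concrete alternative: it exposes the exponential decay of the Green's functions in $x$, at the cost of a $k$-by-$k$ bookkeeping that the paper's global bootstrap avoids.

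One correction to your bookkeeping concern: the operator norms of $L_k^{-1}$ do not grow polynomially in $\abs{k}$. By \ref{ass:upper:bound}, $V_k \geq \eps > 0$ uniformly, so $\norm{L_k^{-1}}_{L^2\to L^2} \lesssim k^{-2}$ and, writing $u'' = \omega^2 k^2 V_k u - f$, one gets $\norm{L_k^{-1}}_{L^2\to H^2} = \landauO(1)$ uniformly in $k$. The superpolynomial decay of the source is therefore far more than needed, and the mode summation is actually easier to close than your closing paragraph suggests.
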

\begin{proof}
	First, by \cref{lem:multiplier_invariance,prop:regularity:W2p} we see that the function $w_1 \coloneqq (\calN\ast)^{-1} u \coloneqq \sum_{k \in \regularK} \frac{1}{\F_k[\calN]} \hat u_k(x) e_k(t)$ satisfies $\fracDT{s} w_1 \in W^{2,p}(\R \times \T)$ for $s \in \R, p \in [2, \infty]$, with additional regularity if $\calG, h$  fulfills \ref{ass:regularity}. Applying $(\calN\ast)^{-1}$ to \eqref{eq:differentiated_u_problem} we see that $w_1$ solves
	\begin{align*}
		(- \partial_x^2 - V(x) \partial_t^2) w_1 + h(x) \partial_t^2 \projR[u^3] = 0.
	\end{align*}
	Let us now turn our attention to $w_2$ and define the space
	\begin{align*}
		H_2 \coloneqq \set{v \in H^1(\R \times \T) \colon \hat v_k \equiv 0 \text{ for } k \in \regularK \cup \set{0}}.
	\end{align*}
	Since $V_k(x)$ is bounded and positive, the Riesz representation theorem provides $w_2 \in H_2$ with
	\begin{align}\label{eq:loc:projected_nonlinearity:weak}
		\int_{\R \times \T} \partial_x w_2 \cdot \partial_x v + V(x) \partial_t w_2 \cdot \partial_t v \der (x, t)
		= - \int_{\R \times \T} h(x) \partial_t^2 \projS[u^3] \cdot v \der (x, t)
	\end{align}
	for all $v \in H_2$. 

	Similarly as for solutions $u$ of $\fc'(u)[v] = 0$ for $v \in \fcd$, we obtain regularity for the solution $w_2$ to \eqref{eq:loc:projected_nonlinearity:weak}. In contrast to $\fc$, where critical points satisfy a truly nonlinear equation, the right-hand side of \eqref{eq:loc:projected_nonlinearity:weak} is independent of $w_2$ and its regularity properties have been established in \cref{prop:regularity:W2p}. Let us sketch the arguments: 

	As in \cref{lem:regularity:time_derivative} we find $\fracDT{s} w_2 \in H_2$ for $s \in \R$. Using that the $0$-th Fourier mode of $h(x) \partial_t^2 \projS[u^3]$ vanishes, $w_2$ satisfies 
	\begin{align}\label{eq:loc:projected_nonlinearity:pw}
		- \partial_x^2 \fracDT{s} w_2 - V(x) \partial_t^2 \fracDT{s} w_2 = - h(x)\partial_t^2 \fracDT{s} \projS[u^3]
	\end{align}
	By the fractional Leibniz rule from \cref{lem:fractional_triple_product}, the regularity properties of $u$ from \cref{prop:regularity:W2p} and the boundedness of the Fourier symbol of $\projS$ we find that the right-hand side of \eqref{eq:loc:projected_nonlinearity:pw} lies in $L^2$. Therefore $\fracDT{s} w_2 \in W^{2,2}(\R \times \T) \subseteq L^\infty(\R \times \T)$ holds for $s \in \R$ and \eqref{eq:loc:projected_nonlinearity:pw} shows $\fracDT{s} w_2 \in W^{2,p}(\R \times \T)$ for $s \in \R$, $p \in [2, \infty]$.
	The additional regularity when $\calG, h$ satisfy \ref{ass:regularity} can then be shown as in \cref{prop:regularity:W2p} by iteratively applying space-derivatives to \eqref{eq:loc:projected_nonlinearity:pw}.
\end{proof}

Lastly, we discuss the regularity of the corresponding electromagnetic fields.

\begin{proof}[Proof of \cref{thm:main} for slab geometries]
	Let $u$ be a nontrivial critical point of $\fc$. If the nonlinearity is given by $N(w) = \calN \ast w^3$ then we set $w:= u$. If otherwise $N(w) = (\calN \ast w)^3$ then let $w$ be from \cref{prop:regularity:w}. Then we define $W \coloneqq \partial_t^{-1} w$ and reconstruct the electromagnetic fields by
	\begin{align*}
		\bfE(\bfx, t) &= w(x, t - \tfrac{1}{c} z) \cdot \begin{pmatrix}
			0 \\ 1 \\ 0
		\end{pmatrix},
		&\bfB(\bfx, t) &= - \begin{pmatrix}
			\tfrac{1}{c} w(x, t - \tfrac{1}{c} z) \\ 0 \\ W_x(x, t - \tfrac{1}{c} z)
		\end{pmatrix} 
		\\ \bfD(\bfx, t) &= \epsilon_0 \left( w + \calG \ast w + h(x) N(w) \right) \cdot \begin{pmatrix}
			0 \\ 1 \\ 0
		\end{pmatrix}, 
		&\bfH(\bfx, t) &= \tfrac{1}{\mu_0} \bfB(\bfx, t).
	\end{align*} 
	Due to \cref{prop:regularity:W2p} and \cref{prop:regularity:w} we have the inclusions
	\begin{align*}
		\partial_t^n \bfE \in W^{2,p}(\Omega; \R^3),
		\qquad 	
		\partial_t^n \bfB, \partial_t^n \bfH \in W^{1,p}(\Omega; \R^3),
		\qquad
		\partial_t^n \bfD \in L^{p}(\Omega; \R^3),
	\end{align*}
	and assuming \ref{ass:regularity} we moreover have
	\begin{align*}
		\partial_t^n \bfE \in \tilde C_b^{2+l}(\Omega; \R^3),
		\qquad  
		\partial_t^n \bfB, \partial_t^n \bfH \in \tilde C_b^{1+l}(\Omega; \R^3),
		\qquad
		\partial_t^n \bfD \in \tilde C_b^{l}(\Omega; \R^3)
	\end{align*}
	for any domain $\Omega = \R \times [y, y+1] \times [z, z + 1] \times [t, t+1]$ and all $n \in \N$, $p \in [2, \infty]$ with norm bounds independent of $y, z, t$. By direct calculation one checks that the fields $\bfE, \bfD, \bfB, \bfH$ solve Maxwell's equations \eqref{eq:maxwell}, \eqref{eq:material}.
\end{proof}

\begin{proof}[Proof of \cref{rem:infinitely}]
	To show that there exist infinitely many solutions, we search for breather solutions with time period $\frac{T}{n}$ instead of $T$. If we define the corresponding time-domain $\T_n \coloneqq \R /_{\frac{T}{n} \Z}$ then the $k$-th Fourier coefficient of a $\frac{T}{n}$-periodic function $f$, when understood as a $\T$-periodic function, satisfies $\F_k[f \colon \T_n \to \R] = \F_{nk}[f \colon \T \to \R]$. Therefore, by the above arguments there exists a $\frac{T}{n}$-periodic breather solution $(\bfE_n, \bfD_n, \bfB_n, \bfH_n)$ to \eqref{eq:maxwell}, \eqref{eq:material} with minimal period $T_n > 0$ for all such $n \in \N$ where $\regularK \cap n \Z \neq \emptyset$. By assumption, the set $\set{n \in \N \colon \regularK \cap n \Z \neq \emptyset}$ is infinite. Since $T_n$ is a divisor of $\frac{T}{n}$ the minimal period $T_n$ goes to $0$ for $n \to \infty$. Hence infinitely many among the breather solutions $(\bfE_n, \bfD_n, \bfB_n, \bfH_n)$ must be mutually distinct.
\end{proof}


\section{Modifications for the cylindrical geometry}
\label{sec:cylinder_modifications}

In this section we discuss the cylindrical problem, that is, we consider equation \eqref{eq:profile_problem:cyl} instead of \eqref{eq:profile_problem:slab}. 
The only difference between the two problems is in the spatial differential operator, where we now work with $-\partial_r^2 - \tfrac{1}{r} \partial_r + \tfrac{1}{r^2}$ on the domain $r \in [0, \infty)$ instead of $-\partial_x^2$ for $x \in \R$. 
The differential operator $- \partial_r^2 - \tfrac{1}{r} \partial_r$ is the $2$d Laplacian for radially symmetric functions, and $\tfrac{1}{r^2}$ is an additional positive term. Hence it is natural to equip the domain $[0, \infty)$ with the measure $r \Der r$, and to identify functions on it with radially symmetric functions of the variables $(x, y) \in \R^2$ via $r = \sqrt{x^2 + y^2}$. We use the subscript ``$\rad$'' to denote spaces of functions that are radially symmetric in $(x,y)$. Since the term $\int_0^\infty \frac{u^2}{r^2} \der[r] r$ cannot be controlled by the $H^1_\rad$-Sobolev norm of $u$ (recall that Hardy's inequality fails in two dimensions) we need to add this term in the form domain of the differential operator.

We will discuss how the arguments from \cref{sec:variational_problem,sec:ground_state_existance,sec:regularity} have to be adapted to treat the cylindrical problem. We use the same structure as in these sections. 
In order to not repeat the previous chapters, we discuss in detail only results that require new techniques to adapt them to the cylindrical geometry and roughly sketch the other results.

\subsection{Modifications for Sections~\ref{sec:variational_problem}~and~\ref{sec:ground_state_existance}}

In analogy to \cref{def:slab:functional_domain,def:slab:weak_solution}, we define the functional of interest $\fcr$ and its domain $\fcrd$ (replacing $\fc$ and $\fcd$).
\begin{definition}
	We define the space
	\begin{align*}
		\fcrd \coloneqq \set{u \in L^2([0, \infty) \times \T; r \Der (r, t)) \colon \hat u_k = 0 \text{ for } k \in \Zeven \cup \singularK, \norm{u}_\fcrd^2 \coloneqq \iip{u}{u}_\fcrd < \infty}
	\end{align*}
	with the two equivalent inner products
	\begin{align*}
		\iip{u}{v}_\fcrd &\coloneqq \sum_{k \in \regularK} \frac{1}{\omega^2 k^2 \F_k[\calN]} 
		\int_0^\infty \left(\hat u_k' \overline{\hat v_k'} + \bigl(\tfrac{1}{r^2} + \omega^2 k^2\bigr) \hat u_k \overline{\hat v_k} \right) \der[r] r,
		\\ \ip{u}{v}_\fcrd &\coloneqq \sum_{k \in \regularK} \frac{1}{\omega^2 k^2 \F_k[\calN]} \int_0^\infty \bigl(\hat u_k' \overline{\hat v_k'} + \bigl(\tfrac{1}{r^2} + \omega^2 k^2 \tilde V_k(r)\bigr) \hat u_k \overline{\hat v_k}\bigr) \der[r]r
	\end{align*}
	where $\tilde V_k(r) \coloneqq \tfrac{1}{c^2} - 1 - \F_k[\calG(r)]$. On $\fcrd$, we define the functional 
	\begin{align*}
		\fcr(u) \coloneqq \tfrac12\ip{u}{u}_\fcrd - \tfrac14 \int_{[0, \infty) \times \T} h(x) u^4 \der[r]{(r,t)}
		\quad\text{for}\quad
		u \in \fcrd
	\end{align*}
	so that its critical points $u \in \fcrd$ satisfy
	\begin{align*}
		\fcr'(u)[v] 
		= \ip{u}{v}_\fcrd - \int_{[0, \infty) \times \T} h(x) u^3 v \der[r]{(r,t)} = 0
		\quad\text{for}\quad
		v \in \fcrd
	\end{align*}
\end{definition}

As a first step, we discuss the embedding properties of $\fcrd$.

\begin{lemma}\label{lem:cyl:Lp_embedding}
	For any $p \in (2, p^\star)$ with $p^\star = \frac{6}{3 - \alpha}$ ($p^\star = \infty$ if $\alpha \geq 3$), the embedding $\fcrd \embeds L^p([0, \infty) \times \T; r \Der (r,t))$ is compact.
	Moreover, $\fcrd \embeds L^2([0, \infty) \times \T; r \Der (r,t))$ is continuous and $\fcrd \embeds L^2_\loc([0, \infty) \times \T; r \Der (r,t))$ is compact.
\end{lemma}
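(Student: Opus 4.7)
My approach will follow the proof of \cref{lem:Lp_embedding} with two structural modifications reflecting the cylindrical setting: the spatial Fourier transform now lives on $\R^2$ rather than on $\R$, which sharpens the critical exponent to $p^\star = \tfrac{6}{3-\alpha}$; and, thanks to the radial symmetry, the compactness at each fixed time-Fourier mode is now global rather than merely local. The key identification is that each Fourier coefficient $\hat u_k(r)$ corresponds to a radially symmetric \emph{angular} vector field $V_k(x,y) \coloneqq \hat u_k(r) \bigl(-\tfrac{y}{r}, \tfrac{x}{r}\bigr)$ on $\R^2$, under which
\[
\int_{\R^2} |V_k|^2 \der{(x,y)} = 2\pi \int_0^\infty |\hat u_k|^2\, r \der{r},
\qquad
\int_{\R^2} |\nabla V_k|^2 \der{(x,y)} = 2\pi \int_0^\infty \Bigl(|\hat u_k'|^2 + \tfrac{|\hat u_k|^2}{r^2}\Bigr) r \der{r}.
\]
Thus the $\fcrd$-inner product is, up to the weight factors $1/(\omega^2 k^2 \F_k[\calN])$, a sum of $H^1(\R^2)$-inner products of such angular vector fields. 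In particular the $\tfrac{1}{r^2}$-weight arises naturally from the Sobolev structure on $\R^2$, without requiring Hardy's inequality (which fails in two dimensions, as mentioned in the section preamble).

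For continuity I would repeat the Hausdorff-Young and Hölder chain from the slab proof, with the spatial Fourier variable $\xi$ now ranging over $\R^2$. The substitution $\xi = \omega k \eta$ then yields $\int_{\R^2}(|\xi|^2 + \omega^2 k^2)^{-\nicefrac r2} \der{\xi} = C|\omega k|^{2-r}$ for $r > 2$, so the exponent on $k$ inside the Fourier-series sum becomes $2 - \tfrac{\alpha r}{2}$ rather than $1 - \tfrac{\alpha r}{2}$. Summability therefore requires $r > 6/\alpha$, equivalently $1/p = 1/2 - 1/r < \alpha/6$, i.e.\ $p < 6/(3-\alpha) = p^\star$. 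Continuity of $\fcrd \embeds L^2([0,\infty) \times \T; r \der{(r,t)})$ is immediate from the definition of the norm.

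For global $L^p$ compactness with $p \in (2, p^\star)$ I would use the frequency projection $P_K u = \F^{-1}[\bbone_{|k| \leq K} \hat u_k]$ as in the slab proof. The continuity estimate, applied to the high-frequency tail, yields $P_K \to I$ in $\calB(\fcrd; L^p)$ as $K \to \infty$, so it suffices to show that $P_K \colon \fcrd \to L^p$ is compact for each fixed $K$. On $P_K \fcrd$ the norm is equivalent to a finite sum of $H^1(\R^2)$-norms of the angular vector fields $V_k$; since $|V_k|$ depends only on $r$, Strauss's compactness lemma for radial $H^1$-functions in dimension two (pointwise decay $|V_k(x)| \lesssim |x|^{-1/2}\|V_k\|_{H^1}$ combined with local Rellich) gives compact embedding into $L^p(\R^2)$ for every $p \in (2, \infty)$. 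Translating back to $[0, \infty) \times \T$ with measure $r \der{(r,t)}$ gives the claim. The local $L^2$-compactness is handled by the same $P_K$-reduction combined with Rellich-Kondrachov on bounded subsets of $\R^2 \times \T$. The principal technical point, and main obstacle, is verifying carefully that the identification $\fcrd \cong$ (weighted sum of angular $H^1(\R^2)$-spaces) transfers Strauss compactness cleanly; once this is pinned down, the remaining arguments parallel those of the slab case.
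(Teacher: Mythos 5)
Your proposal is correct but reaches the global compactness by a genuinely different route. For \emph{continuity} into $L^p$ you and the paper do essentially the same thing: interpret $u$ as radial on $\R^2$, apply Hausdorff--Young and Hölder, and exploit that $\sum_k \omega^2 k^2 \F_k[\calN]^{\nicefrac{r}{2}} \lesssim \sum_k \abs{k}^{2 - \alpha r/2}$ converges precisely when $\tfrac1r < \tfrac{\alpha}{6}$, giving $p^\star = \tfrac{6}{3-\alpha}$. (This corresponds to the paper's interpolation inequality at $\theta_0 = 0$.) The approaches then part ways. The paper proves the weighted bound $\norm{r^{\theta_0} u}_{L^p} \lesssim \norm{u}_\fcrd$ for small $\theta_0 > 0$ via the Bessel decay $\abs{J_0(s)}\lesssim s^{-\theta}$ and Riesz--Thorin interpolation, and then obtains compactness by showing $\norm{u - u\bbone_{B_R\times\T}}_p \lesssim R^{-\theta_0}\norm{u}_\fcrd$, i.e.\ the embedding is a uniform limit of truncation-in-$r$ operators. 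You instead combine the frequency truncation $P_K$ (exactly as in the slab case) with Strauss compactness for radial $H^1(\R^2)$ at each fixed mode, via the identification $\hat u_k \leftrightarrow V_k = \hat u_k(r) e_\theta$. Both are valid. A few remarks:
\begin{enumerate}
\item Your gradient identity $\abs{\nabla V_k}^2 = \abs{\hat u_k'}^2 + \abs{\hat u_k}^2/r^2$ is correct and does give a clean structural explanation of why the $\tfrac{1}{r^2}$-term belongs in the form domain (it is precisely the extra term in the Dirichlet energy of an angular vector field compared with a radial scalar). For the Strauss argument itself the vector-field device is not strictly necessary: from $\norm{u}_\fcrd < \infty$ one reads off directly that, for each $k$, the radial scalar $\hat u_k$ lies in $H^1_\rad(\R^2)$ (the $\tfrac{1}{r^2}$-weight is extra information), so Strauss applies to the scalar profile. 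You do note that $\norm{\abs{V_k}}_{H^1} \leq \norm{V_k}_{H^1}$ via $\abs{\nabla \hat u_k}\leq\abs{\nabla V_k}$ implicitly; this is the observation that makes the transfer clean.
\item For the continuity estimate you also need $r > 2$ (not only $r > 6/\alpha$) to make $\int_{\R^2}(\abs{\xi}^2+1)^{-r/2}\,\der\xi < \infty$; since $\alpha > \tfrac32$ one has $\tfrac{6}{\alpha} < 4$, and the constraint $r>2$ is only binding when $\alpha > 3$, consistent with $p^\star = \infty$ there.
\item Your $L^2_\loc$-compactness via $P_K$ works: the tail estimate for $p=2$ reads $\norm{(I-P_K)u}_{L^2} \lesssim \sup_{\abs{k}>K}\F_k[\calN]^{\nicefrac12}\norm{u}_\fcrd \to 0$, so $P_K \to I$ in $\calB(\fcrd;L^2)$, and each $P_K$ is compact into $L^2_\loc$ by Rellich.
\end{enumerate}
What each approach buys: the paper's Bessel/interpolation route is quantitative (an explicit $r^{-\theta_0}$ decay estimate of the embedded function in terms of $\norm{\cdot}_\fcrd$) and stays entirely within Fourier analysis; yours is more modular, reuses the slab mechanism $P_K \to I$ verbatim, and leans on the standard Strauss compactness that the paper itself alludes to in the remark after \cref{thm:ex:cyl}. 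Both are acceptable.
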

\begin{proof}
	We interpret a function $u \colon [0, \infty) \times \T \to \R$ as a function of the three variables $(x,y,t)$ which is radially symmetric in $(x,y)$ via $r = \sqrt{x^2 + y^2}$. Let $\sigma_\rho$ be the surface measure of the sphere $S_\rho \subseteq \R^2$ of radius $\rho$ centered at $0$, normalized such that $\sigma_\rho(S_\rho) = 1$, and continued by $0$ to a Borel measure on $\R^2$. It satisfies
	\begin{align*}
		\F^{-1}_{(x,y)}(\sigma_\rho) = \tfrac{1}{2 \pi} J_0(r \rho)
	\end{align*}
	where $J_0$ is the Bessel function of first kind.
	Using $\abs{J_0(s)} \lesssim s^{-\theta}$ for $\theta \in [0, \tfrac12]$ (cf. \cite{gradshteyn}) we obtain
	\begin{align*}
		\norm{r^\theta u}_{L^\infty(\R^2 \times \T)}
		\lesssim \norm{\abs{\xi}^{-\theta} \F_{\xi,k}[u]}_{L^1(\R^2 \times \Z)},
	\end{align*}
	where we used that $\F_{\xi,k}[u]$ is radially symmetric in $\xi$.
	Note also that we have $\norm{u}_{L^2(\R^2 \times \T)} = \norm{\F_{\xi,k}[u]}_{L^2(\R^2 \times \Z)}$. This allows us to use the Riesz-Thorin interpolation theorem (cf. \cite{lunardi}) and get (with $\Der\#$ denoting the counting measure) that the map 
	$$
	T: \left\{ \begin{array}{rcl} 
	L_\rad^{p'}(\R^2\times\Z; |\xi|^{-2\theta}\Der \xi\otimes \Der\#) & \to & L^p_\rad(\R^2\times\T; r^{-2\theta}\Der (x,t)), \vspace{\jot} \\
	v & \mapsto & r^\theta \F_{\xi,k} (|\xi|^{-\theta} v)
	\end{array} \right.
	$$
	is bounded for all $p\in [2,\infty]$. To see this note that with $v =|\xi|^{-\theta} \F_{\xi,k}[u]$ we have 
	\begin{align*} 
	\|v\|_{L^{p'}(\R^2\times\Z; |\xi|^{-2\theta}\Der \xi\otimes\Der\#)} &= \|\F_{\xi,k}[u] |\xi|^{-\theta_0}\|_{L^{p'}(\R^2\times\Z; \Der \xi\otimes\Der\#)} \\
	\|Tv\|_{L^p(\R^2\times\T; r^{-2\theta}\Der (x,t))} &= \|r^{\theta_0} u\|_{L^p(\R^2\times\T; \Der (x,t))}
	\end{align*}
	where $\theta_0 = \theta \frac{(p-2)}{p}$ ranges through $[0,\tfrac12- \tfrac1p]$ as $\theta$ runs through $[0,\tfrac12]$. 
	Thus we have 
	\begin{align*}
	\norm{r^{\theta_0} u}_{L^p(\R^2 \times \T)}
	&\lesssim \norm{\abs{\xi}^{-\theta_0} \F_{\xi, k}[u]}_{L^{p'}(\R^2 \times \regularK)}
	\\ &\leq \norm{\abs{\xi}^{-\theta_0} \sqrt{\frac{\omega^2 k^2 \F_k[\calN]}{\abs{\xi}^2 + \omega^2 k^2}}}_{L^r(\R^2 \times \regularK)}
	\norm{\sqrt{\frac{\abs{\xi}^2 + \omega^2 k^2}{\omega^2 k^2 \F_k[\calN]}} \F_{\xi,k}[u]}_{L^2(\R^2 \times \regularK)}
	\end{align*}
	where $\frac{1}{r} = \frac{1}{2} - \frac{1}{p} < \frac{\alpha}{6}$. 
	By the choice of $p^\star$ and assumption \ref{ass:nonlin:decay}, the $L^r$-norm is finite provided $\theta_0$ is chosen sufficiently small, and the $L^2$-norm can be estimated against $\norm{u}_\fcrd$.
	
	For the particular choice $\theta_0 = 0$, this shows that the embedding $\fcrd \embeds L^p_\rad(\R^2 \times \T)$ is continuous.
	Moreover, we can argue similarly as in the proof of \cref{lem:Lp_embedding} to verify that the local embedding $\fcrd \embeds L^p_{\rad,\loc}(\R^2 \times \T)$ is compact.
	
	It remains to show that $\fcrd \embeds L^p_\rad(\R^2 \times \T)$ is compact for $p \neq 2$. 
	For $R > 0$ consider the compact map $E_R \colon \fcrd \to L^p_\rad(\R^2 \times \T), u \mapsto u \bbone_{B_R(0) \times \T}$. 
	Using the above inequality we have 
	\begin{align*}
		\norm{E_R u - u}_p
		\leq \norm{\left(\tfrac{r}{R}\right)^{\theta_0} u}_p
		\lesssim R^{-\theta_0} \norm{u}_\fcrd
	\end{align*}
	so by choosing any admissible $\theta_0 > 0$ and taking the limit $R \to \infty$ we see that the embedding $I \colon \fcrd \to L^p_\rad(\R^2 \times \T)$ is compact as the uniform limit of a sequence of compact operators.
\end{proof}

Notice that, unlike in the slab setting (cf. \cref{lem:Lp_embedding}), the embedding of \cref{lem:cyl:Lp_embedding} is compact for $p > 2$. This is why we do not require additional assumptions \ref{ass:nonlin:loc} or \ref{ass:nonlin:per+loc} in the cylindrical setting. 
One can then show existence of ground states similar to the ``compact'' case \ref{ass:nonlin:loc} of \cref{sec:ground_state_existance}. The only difference is that existence of a convergent subsequence of $h(r) u_n^3$ in $L^{4/3}_\rad(\R^2\times\T)$ is guaranteed by the compact embedding instead of decay properties of $h$.

\subsection{Modifications for Section~\ref{sec:regularity}}

In the following, we show in \cref{prop:regularity:cyl:W2p,prop:regularity:cyl:w} two regularity results that are the cylindrical counterparts to \cref{prop:regularity:W2p,prop:regularity:w}. 

Here, arguments will get more difficult since the cylindrical geometry is effectively $2$-dimensional in space (compared to $1$d for the slab problem).
For some arguments it will be advantageous to view the $\tfrac{1}{r^2}$ not as an additional order $0$ term, but as part of the differential operator. From \cite{bartsch_et_al} we use the identity
\begin{align}\label{eq:4dlaplace}
	\partial_r^2 + \tfrac{1}{r} \partial_r - \tfrac{1}{r^2}
	= \tfrac{1}{r^2} \partial_r r^3 \partial_r \tfrac{1}{r},
\end{align}
which means that up to the multiplicative factors $r, \tfrac{1}{r}$ we are dealing with $\tfrac{1}{r^3} \partial_r r^3 \partial_r$, which is the Laplacian of a radially symmetric function in $4$ dimensions.

Similar to \cref{prop:regularity:W2p} we show that $u$ and its derivatives lie in $L^2 \cap L^\infty$.

\begin{proposition}\label{prop:regularity:cyl:W2p}
	Let $u \in \fcrd$ be a critical point of $\fcr$. Then the terms 
	\begin{align*}
		\max\set{r, 1} \fracDT{s} (\tfrac{u}{r}),
		\qquad
		\max\set{r, 1} \fracDT{s} \partial_r (\tfrac{u}{r}),
		\qquad 
		r \fracDT{s} \partial_r^2 (\tfrac{u}{r})
	\end{align*}
	lie in $L^{p}([0, \infty) \times \T; r \Der (r,t))$ for all $s \in \R$ and $p \in [2, \infty]$, and $u$ solves pointwise 
	\begin{align*}
		(-\partial_r^2 - \tfrac1r \partial_r + \tfrac{1}{r^2} - V(x) \partial_t^2) u + h(x) \partial_t^2 (\calN \ast u^3) = 0.
	\end{align*}
	If \ref{ass:regularity} holds, then the terms 
	\begin{align*}
		\max\set{r, 1} \fracDT{s} \partial_r^n (\tfrac{u}{r}) 
		\quad\text{for}\quad 0 \leq n \leq l+1
		\qquad \text{as well as} \qquad 	
		r \fracDT{s} \partial_r^{l+2} (\tfrac{u}{r})
	\end{align*}
	lie in $L^2([0, \infty) \times \T; r \Der (r,t)) \cap C_b([0, \infty) \times \T)$. 
	Moreover, the second term vanishes at $r = 0$, and the same holds for the first term when $n$ is odd.
\end{proposition}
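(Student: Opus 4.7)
The plan parallels \cref{prop:regularity:W2p} but exploits the identity \eqref{eq:4dlaplace} to reinterpret the cylindrical spatial operator as a four-dimensional radial Laplacian. Setting $v \coloneqq u/r$, one finds $(-\partial_r^2 - \tfrac{1}{r}\partial_r + \tfrac{1}{r^2}) u = -r\, \Delta_{4d} v$, where $\Delta_{4d} = \partial_r^2 + \tfrac{3}{r}\partial_r$ denotes the Laplacian acting on radially symmetric functions on $\R^4$; dividing \eqref{eq:profile_problem:cyl} by $r$ and using $u^3 = r^3 v^3$, the critical point satisfies
\begin{align*}
-\Delta_{4d} v - V(r) \partial_t^2 v + h(r) r^2 \partial_t^2 (\calN \ast v^3) = 0.
\end{align*}
The norm $\iip{u}{u}_\fcrd$ is, up to constants, equivalent to an $H^1$-type norm on $v$ on $\R^4 \times \T$ for functions radially symmetric in space.

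First I would adapt \cref{lem:regularity:time_derivative} to obtain $\fracDT{s} u \in \fcrd$ for all $s \in \R$. The induction there relies only on the time-local fractional Leibniz rule \cref{lem:fractional_triple_product} and the continuous embedding $\fcrd \embeds L^4$, which in the cylindrical setting is given by \cref{lem:cyl:Lp_embedding} since $p^\star > 4$ under \ref{ass:nonlin:decay}. Next I would establish $L^p$-bounds on $\fracDT{s} v$ on $\R^4 \times \T$. The analog of the $L^\infty$-factorization in \cref{prop:regularity:W2p}, Part~1, now requires integrability of $\sqrt{\F_k[\calN]/(\abs{\xi}^2 + \omega^2 k^2)}$ against $\abs{\xi}$ with the correct Lebesgue exponent on $\R^4$; this holds precisely because $\alpha > \tfrac{3}{2}$, matching the threshold $\alpha^\star = \tfrac{3}{2}$ in \ref{ass:nonlin:decay}. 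Interpolation with $L^2$ then yields $\fracDT{s} v \in L^p(\R^4 \times \T)$ for all $p \in [2, \infty]$. Translating back via $u = rv$, the difference between the four-dimensional measure $r^3 \der r$ and the two-dimensional measure $r \der r$ is precisely encoded by the weight $\max\set{r,1}$, giving $\max\set{r,1}\,\fracDT{s}(u/r) \in L^p([0, \infty) \times \T; r \der (r,t))$.

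Testing $\fcr'(u) = 0$ against $\fracDT{s+2}(\calN \ast v)$ for $v$ in the cylindrical analog of the dense set from \cref{lem:dense_subset} yields, as in \cref{prop:regularity:W2p}, the pointwise 4D-elliptic identity
\begin{align*}
-\Delta_{4d} \fracDT{s} v = \fracDT{s+2}\bigl(V(r) v - h(r) r^2 \projR[\calN \ast v^3]\bigr),
\end{align*}
whose right-hand side lies in $L^p$ by the previous step together with \cref{lem:fractional_triple_product,lem:multiplier_invariance}. The identity $\partial_r(r^3 v_r) = r^3 \Delta_{4d} v$ gives $r^3 v_r(r) = \int_0^r s^3 \Delta_{4d} v(s, t)\der s$, from which $v_r = O(r)$ near $r = 0$ (the claimed vanishing of odd derivatives at the origin) and $\max\set{r,1}\,\fracDT{s} \partial_r v \in L^p$ follow; separating $\Delta_{4d} v = v_{rr} + \tfrac{3}{r} v_r$ then yields $r\, \fracDT{s} \partial_r^2 v \in L^p$, the extra factor $r$ absorbing the singularity of $\tfrac{3}{r} v_r$ at the origin. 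Under \ref{ass:regularity}, higher-order regularity follows by inductively applying $\partial_r$ to the displayed identity and invoking the fact that smooth radial functions on $\R^4$ extend to even functions of $r$ through $r = 0$, so that all odd-order $r$-derivatives vanish there.

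The principal obstacle is reconciling the natural four-dimensional measure $r^3 \der r$ arising from \eqref{eq:4dlaplace} with the two-dimensional measure $r \der r$ used in the statement, in the presence of the growth factor $r^2$ introduced by $u^3 = r^3 v^3$ in the nonlinearity. The weights $\max\set{r,1}$ and $r$ appearing on the left-hand side of the claimed inclusions are designed precisely to bridge this mismatch while simultaneously capturing the correct decay at infinity and the correct regularity at the origin.
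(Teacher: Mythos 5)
Your reduction to the four‑dimensional radial Laplacian via $v = u/r$, together with the identity $\iip{u}{u}_\fcrd = \sum_{k\in\regularK}\frac{1}{\omega^2k^2\F_k[\calN]}\int_0^\infty\bigl(|\hat v_k'|^2+\omega^2k^2|\hat v_k|^2\bigr)r^3\der r$, is a clean and correct observation and matches what the paper does in its Part~4 (where it works with $U(X,t)=\tfrac{1}{|X|}u(|X|,t)$ on $\R^4\times\T$). The ODE integration $r^3v_r(r)=\int_0^r s^3\Delta_{4\mathrm d}v\,\der s$ and the Taylor‑expansion reasoning for the vanishing of odd derivatives at $r=0$ under \ref{ass:regularity} are likewise in the spirit of the paper's Part~4b.

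The critical gap is in your Fourier‑side $L^p$ estimates on $v$. You claim that ``the analog of the $L^\infty$-factorization \ldots now requires integrability \ldots with the correct Lebesgue exponent on $\R^4$; this holds precisely because $\alpha>\tfrac32$,'' and that interpolation with $L^2$ then gives $\fracDT{s}v\in L^p(\R^4\times\T)$ for \emph{all} $p\in[2,\infty]$. This fails for two independent reasons. First, the endpoint $p=\infty$ forces the Hölder exponent $r=2$ in the factorization, and then the spatial factor $\int_{\R^4}\bigl(\tfrac{1}{|\Xi|^2+1}\bigr)\der\Xi$ diverges (this is a $\xi$-integral problem, so no decay assumption on $\F_k[\calN]$, i.e.\ no value of $\alpha$, can cure it); the threshold $\alpha>\tfrac32$ governs only the $k$-sum. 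Second, and more to the point, the spatial factor $\int_{\R^4}\bigl(\tfrac{1}{|\Xi|^2+1}\bigr)^{r/2}\der\Xi$ is finite only for $r>4$, i.e.\ for $p<4$; so the direct four‑dimensional Fourier factorization never reaches $p=4$, let alone $p=\infty$, and interpolation with $L^2$ cannot help without an $L^\infty$ endpoint. As a consequence, your claim that the right‑hand side of the 4d‑elliptic equation lies in $L^p$ ``by the previous step'' is unsupported (the cubic nonlinearity would need $v\in L^{3p}$, which your bound cannot provide for $p$ near the top of its range), and the ODE argument that $v_r=O(r)$ near $r=0$ implicitly uses an $L^\infty$ bound on $\Delta_{4\mathrm d}v$ that was never established. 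Finally, the assertion that $\max\{r,1\}$ ``precisely encodes'' the discrepancy between $r^3\der r$ and $r\der r$ is false for $p\notin\{2,\infty\}$: for $r<1$ the weight $r\der r$ is larger than $r^3\der r$, and for $r>1$ and $p>2$ the comparison goes the other way; so $v\in L^p(\R^4\times\T)$ does not imply $\max\{r,1\}v\in L^p([0,\infty)\times\T;r\der(r,t))$.

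The paper avoids all of this by \emph{not} trying to bound $v$ on $\R^4$ from the start. Instead it estimates $u$ itself (viewed as radial on $\R^2$) in Part~1 and obtains $\fracDT{s}u\in L^p([0,\infty)\times\T;r\der(r,t))$ only for $p\in[2,\infty)$. The $L^\infty$ bound and the precise weighted derivative bounds are then obtained by an elliptic bootstrap, split into a large-$r$ regime where the operator is treated as a bounded perturbation of the 2d Laplacian (using cutoffs, $L^p$-boundedness of the Riesz transform, and Sobolev embedding) and a small-$r$ regime where the 4d representation $U=u/|X|$ is used together with a genuine iteration through $H^2\to W^{2,10/3}\to\ldots$ (since $H^2(\R^4)\not\embeds L^\infty$, a single step does not suffice). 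To repair your argument you would need to replace the ``$L^p$ for all $p$'' claim by the weaker $p<4$ statement, or better, revert to the paper's initial 2d estimate on $u$ giving $p<\infty$, and then spell out the cutoff‑based bootstrap — in both the interior and exterior regions — that the paper uses to climb to $L^\infty$ and to control the weighted first and second derivatives.
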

\begin{proof}
	\textit{Part 1:} First, following the proof of \cref{lem:regularity:time_derivative} we obtain $\fracDT{s} u \in \fcrd$ for all $s \in \R$.

	Next, for $p \in [2, \infty)$ we calculate 
	\begin{align*}
		\MoveEqLeft \norm{\fracDT{s} u}_p 
		\lesssim \norm{\abs{\omega k}^s \F_{\xi,k}}_{L^{p'}(\R^2 \times \regularK)}
		\\ &\leq \norm{\abs{\omega k}^{-1} \sqrt{\frac{\omega^2 k^2 \F_k[\calN]}{\abs{\xi}^2 + \omega^2 k^2}}}_{L^r(\R^2 \times \regularK)}
		\norm{\abs{\omega k}^{s + 1} \sqrt{\frac{\abs{\xi}^2 + \omega^2 k^2}{\omega^2 k^2 \F_k[\calN]}} \F_{\xi,k}[u]}_{L^2(\R^2 \times \regularK)}
		\lesssim \norm{\fracDT{s + 1} u}_\fcrd.
	\end{align*}
	Here $1 - \frac{1}{p} = \frac{1}{p'} = \frac{1}{r} + \frac{1}{2}$, and the $L^r$-norm is finite since $r > 2$ and therefore
	\begin{align*}
		\MoveEqLeft \sum_{k \in \R} \abs{\omega k}^{- r} \int_{\R^2} \left( \frac{\omega^2 k^2 \F_k[\calN]}{\abs{\xi}^2 + \omega^2 k^2} \right)^{\frac r2} \der \xi
		\\ &= \int_{\R^2} \left(\frac{1}{\abs{\xi}^2 + 1} \right)^{\nicefrac{r}{2}} \der \xi \cdot \sum_{k \in \R} \abs{\omega k}^{2-r} \F_k[\calN]^{\frac r2}
		\lesssim \sum_{k \in \regularK} \abs{k}^{2 - \frac74 r} < \infty.
	\end{align*}

	\textit{Part 2:}
	Arguing as in part 2 of the proof of \cref{prop:regularity:W2p} we have that $\fracDT{s} u_r, \tfrac{1}{r} \fracDT{s} u \in L^2([0, \infty) \times \T; r \Der (r,t))$ for $s \in \R$ and
	\begin{align}\label{eq:loc:fcr_prime_pde:pw}
		\fracDT{s} u_{rr} + \tfrac{1}{r} \fracDT{s} u_r - \tfrac{1}{r^2} \fracDT{s} u
		= \fracDT{s+2} \left(V(r)u - h(r) \projR[\calN \ast u^3]\right)
	\end{align}
	holds pointwise. From now on arguments differ depending on if $r$ is large or small, and we discuss these cases in part~3 and part~4, respectively.

	\textit{Part 3a:} Let $r > R_1$ for fixed $R_1 > 0$. Then part~1 combined with \eqref{eq:loc:fcr_prime_pde:pw} shows that $\Delta_r \fracDT{s} u \coloneqq (\partial_r^2 + \tfrac{1}{r} \partial_r) \fracDT{s} u \in L^p([R_1, \infty) \times \T; r \Der (r,t))$ for $p \in [2, \infty)$.
	Now choose a cutoff $\psi_1 \in C^\infty([0, \infty))$ with $\supp \psi_1 \subseteq (R_1, \infty)$ and $\psi_1 \equiv 1$ on $[R_2, \infty)$ for $R_2 > R_1$. Then, interpreting $v_1 \coloneqq \psi_1(r) u$ as a function of the three variables $(x, y, t)$ via $r = \sqrt{x^2 + y^2}$ and continuing by zero, we have $\fracDT{s} v_1 \in L^p_\rad(\R^2 \times \T)$ and
	\begin{align*}
		\Delta_{(x,y)} \fracDT{s} v_1 = \Delta_r \psi_1 \cdot \fracDT{s} u + 2 \partial_r \psi_1 \cdot \partial_r \fracDT{s} u + \psi_1 \cdot \Delta_r \fracDT{s} u \in L^2(\R^2 \times \T).
	\end{align*}
	This shows $\fracDT{s} v_1 \in H^2_\rad(\R^2 \times \T)$, and by Sobolev's embedding we in particular have $\fracDT{s} u \in L^\infty([R_2, \infty) \times \T)$, $\fracDT{s} u_r \in L^6([R_2, \infty) \times \T; r \Der (r,t))$. 

	Similar to above, but now with a smooth cutoff $\psi_2$ such that $\supp \psi_2 \subseteq (R_2, \infty)$, $\psi_2 \equiv 1$ on $[R_3, \infty)$ for $R_3 > R_2$, we see that $v_2 \coloneqq \psi_2(r) u$ satisfies $\Delta_{(x,y)} \fracDT{s} v_2 \in L^6_\rad(\R^2 \times \T)$ where again \eqref{eq:loc:fcr_prime_pde:pw} was used. Thus $\fracDT{s} v_2 \in W^{2,6}_\rad(\R^2 \times \T)$ by $L^p$-boundedness of the Riesz transform, cf. \cite[Corollary~5.2.8]{grafakos_classical}. By Sobolev's embedding we have $ \partial_r \fracDT{s} u \in L^\infty([R_3, \infty) \times \T)$, and then $\Delta_r \fracDT{s} u \in L^\infty([R_3, \infty) \times \T)$ by \eqref{eq:loc:fcr_prime_pde:pw}. 
	So far we have shown
	\begin{align*}
		\fracDT{s} u, \fracDT{s} u_r, \fracDT{s} u_{rr} 
		\in L^2([R_3, \infty) \times \T; r \Der (r,t)) \cap L^\infty([R_3, \infty) \times \T).
	\end{align*}
	This shows the first part of \cref{prop:regularity:cyl:W2p} for $r > R_3$, where $R_3 > 0$ can be chosen arbitrarily.

	\textit{Part 3b:} We assume \ref{ass:regularity}, i.e. $\calG, h \in C_b^l$, and still consider large $r$. From \textit{Part 3a} we obtain $\fracDT{s} u, \fracDT{s} u_r, \in C_b([R, \infty) \times \T)$ from the high Sobolev regularity and therefore also $\fracDT{s} u_{rr}\in C_b([R, \infty) \times \T)$ by applying \eqref{eq:loc:fcr_prime_pde:pw}. 
	Now $\fracDT{s} \partial_r^n u \in L^2([R, \infty) \times \T; r \Der (r,t)) \cap C_b([R, \infty) \times \T)$ for $2 < n \leq l + 2$ 
	can be shown iteratively by applying space-derivatives to \eqref{eq:loc:fcr_prime_pde:pw} and using that all terms except the highest order space-derivative term lie in $L^2([R, \infty) \times \T; r \Der (r,t)) \cap C_b([R, \infty) \times \T)$ by the induction hypothesis.

	\textit{Part 4a:} Let us now consider small $r$. We use the representation via the 4d Laplacian, i.e. we consider $U \colon \R^4 \times \T \to \R$, $U(X, t) = U(X_1, X_2, X_3, X_4, t) = \tfrac{1}{\abs{X}} u(\abs{X}, t)$. Notice that the $L^2$-norms are equivalent, i.e. 
	\begin{align*}
		\norm{\tfrac{1}{\abs{X}} f(\abs{X}, t)}_{L^2(B_R \times \T)} = \sqrt{2} \pi \norm{f}_{L^2([0, R] \times \T; r \Der(r,t))}
	\end{align*}
	holds with $B_R \subseteq \R^4$ denoting the ball of radius $R$ centered at $0$. Multiplying \eqref{eq:loc:fcr_prime_pde:pw} by $\tfrac{1}{r}$, setting $r = \abs{X}$, and recalling \eqref{eq:4dlaplace} we have 
	\begin{align}\label{eq:loc:fcr_prime_pde:singular}
		\Delta_X \fracDT{s} U = V(r) \fracDT{s+2} U - \tfrac{h(r)}{r} \fracDT{s+2} \projR[\calN \ast u^3].
	\end{align}
	By part~1 the right-hand side of \eqref{eq:loc:fcr_prime_pde:singular} lies in $L^{2}(\R^4 \times \T)$. Therefore $\fracDT{s} U \in H^2(\R^4 \times \T)$, which by Sobolev's embeddings shows $\fracDT{s} \partial_r U \in L^{\nicefrac{10}{3}}(\R^4 \times \T), \fracDT{s} U \in L^{10}(\R^4 \times \T)$. 
	Now let $R_1 > 0$ and $\psi_1 \in C_c^\infty([0, R_1))$ be a smooth cutoff function with $\psi_1 \equiv 1$ on $[0, R_2]$ for some $0 < R_2 < R_1$ and set $v_1 \coloneqq \psi_1(\abs{X}) U$.
	Then, since 
	\begin{align*}
		\Delta_X \fracDT{s} v_1 = \Delta_X \psi_1 \cdot \fracDT{s} U + \partial_r \psi_1 \cdot \partial_r \fracDT{s} U + \psi_1 \cdot \Delta_X \fracDT{s} U
	\end{align*}
	and since we may write \eqref{eq:loc:fcr_prime_pde:singular} as
	\begin{align}\label{eq:loc:fcr_prime_pde:singular2}
		\Delta_X \fracDT{s} U = V(r) \fracDT{s+2} U - r^2 h(r) \fracDT{s+2} \projR[\calN \ast U^3],
	\end{align}
	we have $\Delta_X \fracDT{s} v_1 \in L^{\nicefrac{10}{3}}(\R^4 \times \T)$ which by $L^p$-boundedness of the Riesz transform implies $\fracDT{s} v_1 \in W^{2,\nicefrac{10}{3}}(\R^4 \times \T)$. From Sobolev's embedding we have $\fracDT{s} U \in L^\infty(B_{R_2} \times \T)$, $\nabla_X \fracDT{s} U \in L^{10}(B_{R_2} \times \T)$. Repeating this argument with $0 < R_3 < R_2$ and a cutoff function $\psi_2 \in C_c^\infty([0, R_2))$, $\psi_j \equiv 1$ on $[0, R_3]$ shows $\nabla_X \fracDT{s} U \in L^\infty(B_{R_3} \times \T)$. Using \eqref{eq:loc:fcr_prime_pde:singular2}, regularity of the terms in the claim of \cref{prop:regularity:cyl:W2p} follows since 
	\begin{align*}
		\fracDT{s} \tfrac{u}{r} = \fracDT{s} U \in L^\infty,
		\qquad 
		\fracDT{s} \partial_r (\tfrac{u}{r}) = \tfrac{X}{r} \cdot \nabla_X \fracDT{s} U \in L^\infty,
		\\ r \fracDT{s} \partial_r^2 (\tfrac{u}{r}) = r \Delta_X \fracDT{s} U - 3 \tfrac{X}{r} \cdot \nabla_X \fracDT{s} U \in L^\infty
	\end{align*}
	The $L^p$-estimates follow from these since $B_{R_3} \times \T$ has finite volume.

	\textit{Part 4b:} Assume \ref{ass:regularity}, and again consider small $r$. First, $\fracDT{s} U, \nabla_X \fracDT{s} U$ are continuous by Sobolev's embedding, and continuity of $\Delta_X \fracDT{s} U$ follows from this by \eqref{eq:loc:fcr_prime_pde:singular2}. 
	Existence and continuity of higher derivatives 
	\begin{align*}
		\nabla_X^n \Delta_X \fracDT{s} U, 
		\qquad
		\nabla_X^{n+1} \fracDT{s} U, 
		\qquad
		\nabla_X^n \fracDT{s} U
	\end{align*}
	for $0 < n \leq l$ can again be shown using induction and repeatedly applying $\nabla_X$ to \eqref{eq:loc:fcr_prime_pde:singular2}. This implies continuity of all terms except the highest order one in \cref{prop:regularity:cyl:W2p} 
	since $\fracDT{s} \partial_r^n (\tfrac{u}{r}) = (\nabla_X^n \fracDT{s} U)[\tfrac{X}{r}, \dots, \tfrac{X}{r}]$. Moreover, odd $r$-derivatives of $\frac{u}{r}$ vanish at $r = 0$ since $U$ is radially symmetric. For the highest order term we have  
	\begin{align*}
		(\nabla_X^l \Delta_X \fracDT{s} U)[\tfrac{X}{r}, \dots, \tfrac{X}{r}]
		= \fracDT{s} \partial_r^{l+2}(\tfrac{u}{r}) 
		+ \fracDT{s} \partial_r^l (\tfrac{3}{r} \partial_r (\tfrac{u}{r}))
	\end{align*}
	which shows that $\fracDT{s} \partial_r^{l+2}(\tfrac{u}{r})$ is continuous away from $0$. To see the behaviour of the highest order term near $r=0$ we use the differentiability properties of $U$ and a Taylor expansion of $\fracDT{s}(\tfrac{u}{r})$ about $r=0$ as follows. Let $\fracDT{s}(\tfrac{u}{r}) = T_{l+1}(\fracDT{s} (\tfrac{u}{r}); 0) + f$ be the Taylor expansion of $\fracDT{s}(\frac{u}{r})$ of degree $l+1$ about $r=0$ with remainder $f$. Then we have
	\begin{align*}
		\fracDT{s} \partial_r^l (\tfrac{3}{r} \partial_r (\tfrac{u}{r}))
		= \partial_r^l \left(\tfrac{3}{r} \partial_r \left[T_{l+1}(\fracDT{s} \tfrac{u}{r}; 0)\right]\right)
		+ \partial_r^l \left(\tfrac{3}{r} \partial_r f\right)
		= \frac{3 (-1)^l}{l!}\frac{\fracDT{s}(\tfrac{u}{r})_r(0)}{r^{l+1}} + \landauo(\tfrac{1}{r})
		= \landauo(\tfrac{1}{r})
	\end{align*}
	as $r \to 0$ since $\fracDT{s}(\tfrac{u}{r})_r(0)=0$ by radial symmetry. This shows that $r \fracDT{s} \partial_r^{l+2}(\tfrac{u}{r}) \to 0$ as $r \to 0$.
\end{proof}

Next, in \cref{prop:regularity:cyl:w}, similar to \cref{prop:regularity:w} we discuss the second nonlinearity \eqref{eq:material:nonii}.

\begin{proposition}\label{prop:regularity:cyl:w}
	Let $u \in \fcrd$ be a critical point of $\fcr$ and let the nonlinearity be given by $N(w) = \calN \ast w^3$. Define $w = w_1 + w_2$ where
	\begin{align*}
		w_1 = \projR[w] = (\calN\ast)^{-1} u,
		\quad
		w_2 = \projS[w] = (-\partial_r^2 - \tfrac{1}{r} \partial_r + \tfrac{1}{r^2} - V(x) \partial_t^2)^{-1} (h(x) \partial_t^2 \projS[u^3])
	\end{align*}
	Then the functions 
	\begin{align*}
		\max\set{r, 1} \fracDT{s} \tfrac{w}{r},
		\qquad
		\max\set{r, 1} \fracDT{s} \partial_r (\tfrac{w}{r}),
		\qquad 
		r \fracDT{s} \partial_r^2 (\tfrac{w}{r})
	\end{align*}
	lie in $L^{p}([0, \infty) \times \T; r \Der(r,t))$ for all $s \in \R$ and $p \in [2, \infty]$, and $w$ solves
	\begin{align*}
		(-\partial_r^2 - \tfrac1r \partial_r + \tfrac{1}{r^2} - V(x) \partial_t^2) w + h(x) \partial_t^2 (\calN \ast w)^3 = 0.
	\end{align*}
	If \ref{ass:regularity} holds, then the terms 
	\begin{align*}
		\max\set{r, 1} \fracDT{s} \partial_r^n (\tfrac{w}{r}) 
		\quad\text{for}\quad 0 \leq n \leq l+1
		\qquad \text{as well as} \qquad 	
		r \fracDT{s} \partial_r^{l+2} (\tfrac{w}{r})
	\end{align*}
	lie in $L^2([0, \infty) \times \T; r \Der (r,t)) \cap C_b([0, \infty) \times \T)$. 
	Moreover, the second term vanishes at $r = 0$, and the same holds for the first term when $n$ is odd. 
\end{proposition}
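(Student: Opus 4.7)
The strategy is exactly analogous to that of \cref{prop:regularity:w}, but with \cref{prop:regularity:W2p} replaced by its cylindrical counterpart \cref{prop:regularity:cyl:W2p}. I would split the argument into three steps, treating $w_1$ and $w_2$ separately and then recombining.

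\textbf{Step 1 (regularity of $w_1$).} Since $\F_k[\calN]\neq 0$ for $k\in\regularK$ and $|\F_k[\calN]|^{-1}\lesssim |k|^\beta$ by \ref{ass:nonlin:decay}, the operator $(\calN\ast)^{-1}$ is a Fourier multiplier in time of polynomial growth. By the cylindrical analogue of \cref{lem:multiplier_invariance} it commutes with $\fracDT{s}$ and maps the function spaces appearing in \cref{prop:regularity:cyl:W2p} into themselves. Hence $w_1=(\calN\ast)^{-1}u$ inherits all the regularity statements for $u$ from \cref{prop:regularity:cyl:W2p}. Applying $(\calN\ast)^{-1}$ to the pointwise equation satisfied by $u$ in \cref{prop:regularity:cyl:W2p} gives
\begin{align*}
(-\partial_r^2-\tfrac{1}{r}\partial_r+\tfrac{1}{r^2}-V(r)\partial_t^2)w_1 + h(r)\partial_t^2\projR[u^3]=0.
\end{align*}

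\textbf{Step 2 (regularity of $w_2$).} Define, in analogy with the space $H_2$ used in \cref{prop:regularity:w}, the Hilbert space
\begin{align*}
\tilde H_2\coloneqq \set{v\in L^2([0,\infty)\times\T;r\Der(r,t))\colon \hat v_k\equiv 0 \text{ for } k\in\regularK\cup\set 0,\ \|v\|_{\tilde H_2}<\infty}
\end{align*}
with inner product built from $\hat v_k'\overline{\hat v_k'} + (\tfrac{1}{r^2}+\omega^2k^2\tilde V_k(r))\hat v_k\overline{\hat v_k}$ as in the definition of $\fcrd$. Since $\tilde V_k$ is bounded and positive, the Riesz representation theorem produces a unique $w_2\in\tilde H_2$ solving the weak form of $(-\partial_r^2-\tfrac1r\partial_r+\tfrac1{r^2}-V(r)\partial_t^2)w_2 = -h(r)\partial_t^2\projS[u^3]$. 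The right-hand side is controlled by the regularity of $u$ via the fractional Leibniz rule (\cref{lem:fractional_triple_product}) combined with \cref{prop:regularity:cyl:W2p} and the fact that $\projS$ has a bounded Fourier symbol; in particular, since the zeroth time-Fourier mode of $h\partial_t^2\projS[u^3]$ vanishes, the singular-frequency restriction causes no obstruction.

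To upgrade $w_2$ to the pointwise and $L^p$/$C_b$ statements, I would follow essentially verbatim the structure of \cref{prop:regularity:cyl:W2p}: first obtain $\fracDT{s}w_2\in\tilde H_2$ for all $s\in\R$ by imitating \cref{lem:regularity:time_derivative}, then derive pointwise the linear equation $-\partial_r^2\fracDT{s}w_2-\tfrac{1}{r}\partial_r\fracDT{s}w_2 + \tfrac{1}{r^2}\fracDT{s}w_2 = \fracDT{s+2}(V(r)w_2 - h(r)\projS[\calN\ast u^3])$ (using here the stronger bounds on the RHS already established in Step 1 and in \cref{prop:regularity:cyl:W2p}). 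For the large-$r$ regime I would cut off, interpret as a radial function of $(x,y)\in\R^2$, and bootstrap using $L^p$-boundedness of the Riesz transform. For small $r$ I would use the $4$d-Laplacian identity
\begin{align*}
\partial_r^2+\tfrac{1}{r}\partial_r-\tfrac{1}{r^2} = \tfrac{1}{r^2}\partial_r r^3\partial_r\tfrac{1}{r},
\end{align*}
set $W_2(X,t)=\tfrac{1}{|X|}w_2(|X|,t)$ for $X\in\R^4$, and apply the same cutoff-and-bootstrap scheme as in the proof of \cref{prop:regularity:cyl:W2p}. Under assumption \ref{ass:regularity} one obtains the continuous higher derivatives exactly as in Part~3b and Part~4b there, including the vanishing of odd radial derivatives of $w_2/r$ at $r=0$ and the behaviour of the top-order term via a Taylor expansion.

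\textbf{Step 3 (recombination).} Setting $w=w_1+w_2$ and using that $u=\calN\ast w$, hence $u^3=(\calN\ast w)^3$ and $\projR[u^3]+\projS[u^3]=u^3$, the two linear equations for $w_1,w_2$ add up to the desired nonlinear equation
\begin{align*}
(-\partial_r^2-\tfrac1r\partial_r+\tfrac{1}{r^2}-V(r)\partial_t^2)w + h(r)\partial_t^2(\calN\ast w)^3 = 0.
\end{align*}
Regularity statements for $w$ follow by adding those for $w_1$ and $w_2$. The main obstacle in the whole argument is the small-$r$ analysis for $w_2$: the operator $-\partial_r^2-\tfrac{1}{r}\partial_r+\tfrac{1}{r^2}$ is singular at the origin, so standard $2$d elliptic regularity does not apply directly. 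The $4$d substitution $w_2\mapsto w_2/r$ resolves this, but then one must track carefully that the $L^p$-bounds on the source term, the Riesz-transform bootstrap, and the Taylor-expansion control of $\partial_r^{l+2}(w_2/r)$ at $r=0$ all go through in the linear setting as they did for the nonlinear equation of Step~1.
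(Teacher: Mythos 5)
Your proposal is correct and follows exactly the same route as the paper: treat $w_1=(\calN\ast)^{-1}u$ via the multiplier lemma together with Proposition~\ref{prop:regularity:cyl:W2p}, construct $w_2$ as the weak Riesz-representation solution in the weighted $H^1$-space carrying the $1/r^2$ term and supported on $\singularK$, and then rerun the large-$r$/small-$r$ bootstrap of Proposition~\ref{prop:regularity:cyl:W2p} (cutoff plus Riesz transform for large $r$, the 4d-Laplacian substitution $w_2/r$ for small $r$) on the linear equation for $w_2$. Your recombination step correctly uses $u=\calN\ast w$ to pass from $\projR[u^3]+\projS[u^3]=u^3$ to the stated nonlinear equation with $(\calN\ast w)^3$, which is indeed the intended case (the ``$N(w)=\calN\ast w^3$'' in the statement is a typo for the second nonlinearity).
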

\begin{proof}
	We follow \cref{prop:regularity:w}, and define $w_2$ by
	\begin{align*}
		w_2 \in \fcrd_2 \coloneqq
		\set{v \in H^1([0, \infty) \times \T; r \Der(r,t)) \colon \tfrac{1}{r} v \in L^2([0, \infty) \times \T; r \Der(r,t)), \hat v_k \equiv 0 \text{ for } k \in \regularK \cup \set{0}}
	\end{align*}
	and 
	\begin{align*}
		\int_{[0,\infty) \times \T} \left( \partial_r w_2 \cdot \partial_r v + \tfrac{1}{r^2} w_2 \cdot v + V(r) \partial_t w_2 \cdot \partial_t v \right) \der[r]{(r, t)}
		= - \int_{[0, \infty) \times \T} \left(h(r) \partial_t^2 \projS[u^3] \cdot v\right) \der[r]{(r,t)}
	\end{align*}
	for all $v \in \fcrd_2$.
	Regularity of $w_1$ follows from \cref{prop:regularity:cyl:W2p}, and the arguments therein can also be used to show regularity of $w_2$.
\end{proof}

As the last part of this chapter, we discuss regularity of the electromagnetic wave profiles. 

\begin{proof}[Proof of \cref{thm:main} for cylindrical geometries]
	\textit{Part 1:} Let $u \in \fcrd$ be a nontrivial critical point of $\fcr$. For the nonlinearity $N(w) = \calN \ast w^3$ set $w \coloneqq u$, else let $w$ be from \cref{prop:regularity:cyl:w}. Define $W \coloneqq \partial_t^{-1} w$ and the electromagnetic fields by
	\begin{align*}
		\bfD(\bfx, t) &= \epsilon_0 \left( w + \calG \ast w + h(r) N(w)\right) \cdot \begin{pmatrix}
			-\nicefrac yr \\ \nicefrac xr \\ 0
		\end{pmatrix},
		& \bfE(\bfx, t) &= w(r, t - \tfrac{1}{c} z) \cdot \begin{pmatrix}
			-\nicefrac yr \\ \nicefrac xr \\ 0
		\end{pmatrix},
		\\ \bfB(\bfx, t) &= - \tfrac{1}{c} w \cdot \begin{pmatrix}
			\nicefrac xr \\ \nicefrac yr \\ 0
		\end{pmatrix} - (\tfrac{1}{r} W + W_r) \cdot \begin{pmatrix}
			0 \\ 0 \\ 1
		\end{pmatrix},
		& \bfH(\bfx, t) &= \tfrac{1}{\mu_0} \bfB(\bfx, t)
	\end{align*}
	By a straightforward calculation one sees that $\bfE, \bfD, \bfB, \bfH$ solve Maxwell's equations \eqref{eq:maxwell}, \eqref{eq:material}, so it remains to show their regularity. For simplicity we only consider $\bfE$ and only discuss spatial derivatives. 
	Abbreviating $p(\bfx) \coloneqq (-y, x, 0)$, denoting the Euclidean scalar product in $\R^3$ by $\ip{\impvar}{\impvar}$ and the space derivative by $D_\bfx$, we have
	\begin{align*}
		\bfE 
		&= \frac{w}{r} p 
		\\ &= r \frac{w}{r} \frac{p}{r}, 
		\\ D_\bfx \bfE[h] 
		&= \frac{w}{r} D_\bfx p[h] + \tfrac{1}{r} \partial_r \left(\frac{w}{r}\right) \Ip{\bfx}{h} p
		\\ &= \frac{w}{r} D_\bfx p[h] + r \partial_r \left(\frac{w}{r}\right) \Ip{\frac{\bfx}{r}}{h} \frac{p}{r}
		\\ D_\bfx^2 \bfE[h_1, h_2] 
		&= \tfrac{1}{r} \partial_r \left(\frac{w}{r}\right) \left[\Ip{\bfx}{h_1} D_\bfx p[h_2] + \Ip{\bfx}{h_2} D_\bfx p[h_1] + \Ip{h_1}{h_2} p\right] 
		\\ &\qquad+ \left(\tfrac{1}{r}\partial_r\right)^2\left(\frac{w}{r}\right) \Ip{\bfx}{h_1} \Ip{\bfx}{h_2} p,
		\\ &= \partial_r \left(\frac{w}{r}\right) \left[\Ip{\frac{\bfx}{r}}{h_1} D_\bfx p[h_2] + \Ip{\frac{\bfx}{r}}{h_2} D_\bfx p[h_1] + \Ip{h_1}{h_2}\frac{p}{r}\right] 
		\\ &\qquad+ \left[ r \partial_r^2 \left(\frac{w}{r}\right) - \partial_r \left(\frac{w}{r}\right) \right] \Ip{\frac{\bfx}{r}}{h_1} \Ip{\frac{\bfx}{r}}{h_2} \frac{p}{r},
	\end{align*}
	so \cref{prop:regularity:cyl:W2p,prop:regularity:cyl:w} show that these terms lie in $L^p$ for $p \in [2, \infty]$.

	\textit{Part 2:} Let us now assume \ref{ass:regularity}. We need to show that higher order derivatives exist, are continuous and square-integrable. Away from $r = 0$, this is clear by \cref{prop:regularity:cyl:W2p,prop:regularity:cyl:w}, so it remains to show continuity of derivatives in $r=0$. First, by induction one can show that for $0 \leq n \leq l+2$ the derivative $D_\bfx^n \bfE$ can be written as a sum
	\begin{align*}
		D_\bfx^n \bfE = 
		\sum_{j=\ceil{\frac{n-1}{2}}}^{n}(\tfrac{1}{r}\partial_r)^{j} \left(\frac{w}{r}\right) \cdot p_{n,j},
	\end{align*}
	where $p_{n,j}(\bfx) \colon (\R^3)^n \to \R^3$ is symmetric, $n$-multilinear, and its coefficients are homogeneous polynomials of degree $2 j + 1 - n$ in $\bfx$. We use Taylor approximation and write $\tfrac{w}{r} = T_{n-1}(\tfrac{w}{r}; 0) + f$ with Taylor polynomial $T_{n-1}(\tfrac{w}{r}; 0)$ and remainder $f$. 
	
	Let us next consider summands with $j < n$. Recall that all odd Taylor coefficients are zero, so $q_{n,j} \coloneqq (\tfrac{1}{r} \partial_r)^{j} T_{n-1}(\tfrac{w}{r}; 0)$ is an even polynomial. In addition, we can estimate the remainder via $(\tfrac{1}{r} \partial_r)^{j} f = \landauo(r^{n - 1 - 2 j})$ as $r \to 0$. Thus
	\begin{align*}
		(\tfrac{1}{r}\partial_r)^j \left(\frac{w}{r}\right) \cdot p_{n,j}
		= (q_{n,j}(r) + \landauo(r^{n-1-2j})) p_{n,j} \to q_{n,j}(0) p_{n,j}(0)
	\end{align*}
	as $r \to 0$. Now let $j = n$. Similar to the above arguments, one can show 
	\begin{align*}
		\left[(\tfrac{1}{r}\partial_r)^{n} - \frac{1}{r^n} \partial_r^n\right] \left(\frac{w}{r}\right) 
		= q_{n,n}(r) + \landauo(r^{-n-1})
	\end{align*}
	as $r \to 0$ for some polynomial $q_{n,n}$. Thus
	\begin{align*}
		D_\bfx^n \bfE
		= \frac{1}{r^n} \partial_r^n (\tfrac{w}{r}) \cdot p_{n, n} + \sum_{j=\ceil{\frac{n-1}{2}}}^{n} q_{n,j}(r) p_{n,j}(\bfx) + \landauo(1)
		\to \sum_{j=\ceil{\frac{n-1}{2}}}^{n} q_{n,j}(0) p_{n,j}(0)
	\end{align*}
	as $r \to 0$ by \cref{prop:regularity:cyl:W2p,prop:regularity:cyl:w}. Since the argument for existence of infinitely many solutions is the same as for slab geometries at the end of \cref{sec:regularity}, this completes the proof. Observe that $p_{n,j}(0) = 0$ for $j \neq \frac{n+1}{2}$, so in particular all even derivatives of $\bfE$ vanish at $0$.
\end{proof}

	\section*{Acknowledgement} 
	
	Funded by the Deutsche Forschungsgemeinschaft (DFG, German Research Foundation) – Project-ID 258734477 – SFB 1173

	\printbibliography
\end{document}